\newtheorem{thm}{Theorem}[section]
\newtheorem*{thm*}{Theorem}
\newtheorem*{cor*}{Corollary}
\newtheorem*{prop*}{Proposition}
\newtheorem{cor}[thm]{Corollary}
\newtheorem{prop}[thm]{Proposition}
\newtheorem{lem}[thm]{Lemma}
\theoremstyle{definition}
\newtheorem{defn}[thm]{Definition}
\newtheorem*{conv*}{Convention}
\newtheorem*{notn*}{Notation}
\theoremstyle{remark}
\newtheorem{rem}[thm]{Remark}
\newtheorem*{termi*}{\textbf{Terminology}}
\newtheorem*{idea*}{Idea}
\newcommand{\Spec}{{\rm Spec}}
\let\c@equation\c@thm
\numberwithin{thm}{section}
\numberwithin{equation}{section}
\title[Filtered Stokes G-local Systems]{Filtered Stokes G-local Systems in Nonabelian Hodge Theory on Curves}
\author{Pengfei Huang and Hao Sun}
\begin{document}
\pagenumbering{arabic}
\maketitle
\begin{abstract}
In the wild nonabelian Hodge correspondence on curves, filtered Stokes $G$-local systems are regarded as the objects on the Betti side. In this paper, we demonstrate a construction of the moduli space of them, called the Betti moduli space, and it reduces to the wild character variety when the Betti weights are trivial. We study some particular examples including Eguchi–-Hanson space and the Airy equation together with the corresponding moduli spaces. Furthermore, we provide a proof of the correspondence among irregular singular $G$-connections, Stokes $G$-local systems, and Stokes $G$-representations. This correspondence can be  viewed as the $G$-version of irregular Riemann--Hilbert correspondence on curves.
\end{abstract}
	
\flushbottom
	
	
\renewcommand{\thefootnote}{\fnsymbol{footnote}}
\footnotetext[1]{Key words: Stokes local system, meromorphic connection, Betti moduli space, nonabelian Hodge correspondence}
\footnotetext[2]{MSC2020: 14D20, 34M40}
	
\section{Introduction}

\subsection{Background}
The study of the nonabelian Hodge correspondence on noncompact curves begins with Simpson \cite{Sim90}, where he introduced filtered regular Higgs bundles, filtered regular $D_X$-modules, and filtered local systems along with corresponding stability conditions to establish a one-to-one correspondence among them. This correspondence is famously known as the tame nonabelian Hodge correspondence, where tameness characterizes a polynomial growth condition of flat sections or the regularity of meromorphic connections. Under this framework, filtered local systems are local systems on a noncompact curve with additional structures called weighted filtrations (also called parabolic structures). They correspond to fundamental group representations satisfying certain compatibility conditions determined by weights (these conditions are trivial in some sense). The existence of weights makes the stability of filtered local systems not equivalent to the irreducibility of the corresponding fundamental group representations. In a similar way, Biquard--Boalch \cite{BB04} generalized Simpson's correspondence to a broader framework beyond tameness, called wildness, where meromorphic connection can be irregular. They established a one-to-one correspondence between (poly)stable filtered irregular Higgs bundles and  (poly)stable filtered irregular $D_X$-modules under a ``very good'' condition. Although this work is known as the (unramified) wild nonabelian Hodge correspondence, it does not touch the objects from the Betti side.

To establish a comprehensive wild nonabelian Hodge correspondence, a crucial step involves figuring out the correct objects on the Betti side, which not only correspond to filtered irregular $D_X$-modules but also preserves stability conditions. Classically, the Riemann--Hilbert correspondence connects $D_X$-modules with local systems. The same idea holds in the wild case and it is well-known that there is a one-to-one correspondence between connections with irregular singularities (or called irregular $D_X$-modules) and Stokes local systems \cite{Sib77,Mal78,Mal83,Lod94}, which is also known as the irregular Riemann--Hilbert correspondence. Moreover, this correspondence was studied systematically in a great generality by Boalch for reductive groups as the structure group \cite{Boa14,Boa18,Boa21}. Inspired by previous works, the authors introduced (Betti) weights to Stokes local systems, which are called filtered Stokes local systems and regarded as the objects on the Betti side, and define its stability condition. Then, an (unramified) wild nonabelian Hodge correspondence at the level of categories was established \cite{HS22}. Furthermore, such a correspondence also holds for complex reductive groups as the structure groups. For the case of trivial Betti weights, filtered Stokes local systems from the Betti side reduce to Stokes local systems. In this case, Boalch constructed the moduli space of Stokes local systems by identifying Stokes local systems with representations of the fundamental groupoid of irregular curves, which is now known as the wild character variety. However, as the tame case Simpson ever observed, the stability of filtered Stokes $G$-local systems no longer aligns with the irreducibility of the corresponding representations, which is different from the case of trivial Betti weights investigated by Boalch--Yamakawa \cite{BY23}. Consequently, this leads to the fact that a construction of the moduli spaces of filtered Stokes $G$-local systems becomes challenging.

In this paper, we provide a construction of this moduli space in detail, for both unramified and ramified cases and study some specific examples. Moreover, the (unramified) wild nonabelian Hodge correspondence considered in \cite{HS22} holds at the level of moduli spaces.

\subsection{Main result}
We introduce some notations first. Let $G$ be a connected complex reductive group with a given maximal torus $T$. Let $X$ be a connected smooth projective algebraic curve over $\mathbb{C}$ with a collection $\boldsymbol{D}$ of finite points. Denote by $X_{\boldsymbol{D}} := X \backslash \boldsymbol{D}$ the punctured curve. We also fix a collection of irregular types $\boldsymbol{Q} = \{Q_x, x \in \boldsymbol{D}\}$ and a collection of weights $\boldsymbol\theta =\{\theta_x , x \in \boldsymbol{D}\}$ labelled by points in $\boldsymbol{D}$, where a weight is regarded as a rational cocharacter ${\rm Hom}(\mathbb{G}_m, T) \otimes_{\mathbb{Z}} \mathbb{Q}$.

To construct the moduli space of filtered Stokes $G$-local systems, we first relate Stokes $G$-local systems to Stokes $G$-representations, and study the irregular Riemann--Hilbert correspondence for reductive groups $G$ on $X_{\boldsymbol{D}}$ in both unramified and ramified cases in \S\ref{sect_irre_RH_corr} (Theorem \ref{thm_global_con_sys} and \ref{thm_boa14A.3}). The proof is similar to \cite[Appendix]{Boa14}, where the author studied the unramified case. Moreover, Hohl and Jakob recently proved the correspondence (Theorem \ref{thm_global_con_sys}) in a different way via Tannakian categories and we refer the reader to \cite[\S 3]{HJ24} for more details.

In \S\ref{sect_moduli}, we review the stability conditions on filtered Stokes $G$-local systems (Definition \ref{defn_stab_cond_G_Stokes}), which is the stability condition considered in the wild nonabelian Hodge correspondence \cite{HS22}, and construct the moduli space of filtered Stokes $G$-local systems. We would like to point out that King's result \cite{King94} inspires us to relate the stability condition of filtered Stokes $G$-local systems to the stability condition in the sense of GIT (Proposition \ref{prop_stab_equiv}), which help us to construct the moduli sapce.
\begin{thm}[Theorem \ref{thm_Sto_moduli}]
The moduli space $\mathcal{M}_{\rm B}(X_{\boldsymbol{D}},G,\boldsymbol{Q},\boldsymbol\theta)$ of degree zero $\boldsymbol\theta$-filtered Stokes $G$-local systems with irregular type $\boldsymbol{Q}$ on $X_{\boldsymbol{D}}$ exists as a quasi-projective variety.
\end{thm}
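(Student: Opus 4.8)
The plan is to construct the moduli space as a GIT quotient, following the strategy that the excerpt has already announced: translate the stability condition for filtered Stokes $G$-local systems into a GIT-stability condition (via Proposition \ref{prop_stab_equiv}), and then invoke a quotient construction in the style of King. Let me think about how this goes.

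The key bridge is the irregular Riemann–Hilbert correspondence (Theorems \ref{thm_global_con_sys} and \ref{thm_boa14A.3}), which identifies Stokes $G$-local systems with Stokes $G$-representations — i.e., representations of the fundamental groupoid of the irregular curve, which is a finitely-presented object. So the space of all such representations is an affine variety (a subvariety of a product of copies of $G$, cut out by the relations of the groupoid, together with the Stokes/exponential torus constraints). This gives a "representation variety" $R$ on which $G$ (or a product of groups attached to the structure) acts by conjugation. The moduli space should be $R /\!\!/ G$ in an appropriate GIT sense, but twisted to account for the $\boldsymbol\theta$-weights.

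Now the weights enter. Because the weights $\theta_x$ are rational cocharacters of $T$, they determine a character of the stabilizer/relevant group, and hence a linearization on a trivial line bundle over $R$. This is precisely King's setup: stability with respect to a character $\chi$. So I'd expect the proof to linearize the $G$-action on $R$ using a character built from $\boldsymbol\theta$, show (using Proposition \ref{prop_stab_equiv}) that $\boldsymbol\theta$-(semi)stability of the filtered Stokes $G$-local system coincides with $\chi$-(semi)stability in the GIT sense, and then take the GIT quotient $R /\!\!/_\chi G$, which exists as a quasi-projective variety by Mumford's theory (King's Proposition 3.1 / Theorem 4.1). Let me lay out the steps.

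First I would record the representation variety: using the RH correspondence, set up $R = R(X_{\boldsymbol{D}}, G, \boldsymbol{Q})$ as the affine variety of Stokes $G$-representations with fixed irregular type $\boldsymbol{Q}$, and identify the reductive group $H$ (a product of copies of $G$, or $G$ modulo center, depending on the groupoid presentation) acting on it. Second, I would construct from $\boldsymbol\theta$ the linearizing character: each rational cocharacter $\theta_x$ pairs against the determinant-of-weight data to give a rational character of $H$; clearing denominators gives an honest character and hence a $G$-linearized ample line bundle $L_{\boldsymbol\theta}$ on a projective completion. Third — and this is where Proposition \ref{prop_stab_equiv} does the heavy lifting — I would show that a point of $R$ is $\chi$-(semi)stable for this linearization if and only if the associated filtered Stokes $G$-local system is $\boldsymbol\theta$-(semi)stable in the sense of Definition \ref{defn_stab_cond_G_Stokes}. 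Fourth, I would invoke King's GIT theorem to conclude that the quotient exists as a quasi-projective variety, and note that degree-zero forces the relevant numerical condition so that semistable points exist and the quotient is nonempty; finally, I'd check the quotient represents (or corepresents) the moduli functor, identifying $\mathcal{M}_{\rm B} = R /\!\!/_\chi H$.

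The main obstacle, as the excerpt itself flags, is precisely the third step: matching the intrinsic $\boldsymbol\theta$-stability of filtered Stokes $G$-local systems — which is phrased in terms of parabolic reductions, degrees, and the weights $\boldsymbol\theta$ — with the GIT numerical criterion (the Hilbert–Mumford functional) on $R$. The subtlety is that, unlike the trivial-weight case studied by Boalch–Yamakawa where (semi)stability reduces to irreducibility, here the weights genuinely intervene, so one must carefully compute the Hilbert–Mumford weight $\mu(x, \lambda)$ for one-parameter subgroups $\lambda$ and see that it reproduces the parabolic-degree expression appearing in Definition \ref{defn_stab_cond_G_Stokes}. A secondary technical point is the passage to the ramified case, where one works on a cover and must descend the construction equivariantly under the relevant finite group; I would handle this by reducing the ramified problem to an unramified one on the cover and taking invariants, using Theorem \ref{thm_boa14A.3} to guarantee the two sides still correspond. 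I expect the rationality of the weights to require only a standard clearing-of-denominators argument, so that it poses no essential difficulty beyond bookkeeping.
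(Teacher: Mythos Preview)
Your outline is essentially the paper's proof: build a representation space, linearize by a character determined by $\boldsymbol\theta$, invoke Proposition~\ref{prop_stab_equiv} to match $R$-stability with $\chi_{\boldsymbol\theta}$-stability, and take the GIT quotient via King's construction. Two small corrections are worth noting. First, the acting group is not ``a product of copies of $G$'': the paper passes to the auxiliary presentation $\Pi''$ (with extra generators $\iota''_x$) precisely so that the gauge group becomes $G\times\boldsymbol{L}$ with $\boldsymbol{L}=\prod_{x\in\boldsymbol D}L_{\theta_x}$, and the character $\chi_{\boldsymbol\theta}$ lives on the Levi factors via $\chi_{-d\theta_x}$; for semisimple $G$ there are no nontrivial characters of $G$ itself, so without this enlargement your ``character built from $\boldsymbol\theta$'' would be trivial and could not detect the weights. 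Second, the ramified case requires no separate descent argument: once the Stokes groups $\mathbb{S}\mathrm{to}_d$ and the formal-monodromy torsor $H(\partial_x)$ are set up (which already absorbs the ramification), the GIT construction proceeds uniformly.
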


In \S\ref{sect_eg}, we study some examples of filtered Stokes $G$-local systems together with their moduli spaces. Here is a summary of the results.
\begin{enumerate}
    \item When the weights $\boldsymbol\theta$ are trivial, the moduli space $\mathcal{M}_{\rm B}(X_{\boldsymbol{D}},G,\boldsymbol{Q},\boldsymbol\theta)$ is exactly the wild character variety, which has been studied in \cite{Boa14,BY15,BY23,DDP18,HMW19}.
    \item When the irregular types $\boldsymbol{Q}$ are trivial, the moduli space $\mathcal{M}_{\rm B}(X_{\boldsymbol{D}},G,\boldsymbol{Q},\boldsymbol\theta)$ is the moduli space of filtered $G$-local systems \cite{HS23}, which is the Betti moduli space in the tame nonabelian Hodge correspondence.
    \item We equip Eguchi--Hanson space with distinct weights and find a filtered Stokes local system that is stable but not irreducible. This example shows that the Betti moduli spaces in the wild nonabelian Hodge correspondence may not be the wild character varieties.
    \item We calculate the corresponding Stokes $G$-representations of the classical Airy equation and show that the moduli space, where it lies, is a single point. Therefore, the corresponding irregular singular connection of the Airy equation is both rigid and physically rigid. This result is also obtained in \cite[Theorem 1.2.1]{HJ24} recently. Moreover, we prove that in the case of $G={\rm SL}_2(\mathbb{C})$ and a ramified irregular type $Q$, the moduli space $\mathcal{M}_{\rm B}(X_{\boldsymbol{D}}, {\rm SL}_2(\mathbb{C}), Q,\theta)$ is always isomorphic to the wild character variety and does not depend on the weight (Proposition \ref{prop_ram_sl2}). This result can be generalized to $G = {\rm SL}_n(\mathbb{C})$ in a certain extent (Remark \ref{rem_ram_sln}).
\end{enumerate}

In \S\ref{sect_nahc}, we construct the Betti moduli space (Corollary \ref{cor_Betti}) in the (unramified) wild nonabelian Hodge correspondence for principal bundles on curves given by the authors in \cite[Theorem in \S 1]{HS22}, and then we show the correspondence holds at the level of moduli spaces (Theorem \ref{thm_nahc}). Moreover, the corresponding moduli space is shown admitting a hyperK\"alher structure (Theorem \ref{hK-Higgs}).

\vspace{2mm}

\textbf{Acknowledgments}.
The authors would like to thank Konstantin Jakob, Yichen Qin and Xiaomeng Xu for helpful discussions. Pengfei Huang acknowledges funding from the European Research Council (ERC) under the European Union’s Horizon 2020 research and innovation program (grant agreement No 101018839) and Deutsche Forschungsgemeinschaft (DFG,
Projektnummer 547382045). Hao Sun is partially supported by National Key R$\&$D Program of China (No. 2022YFA1006600).
\vspace{2mm}

\section{Irregular Riemann--Hilbert Correspondence}\label{sect_irre_RH_corr}

Let $X$ be a connected smooth projective algebraic curve over $\mathbb{C}$ (i.e. a connected compact Riemann surface) with a collection of finite points $\boldsymbol{D}$. Denote by $X_{\boldsymbol{D}} := X \backslash \boldsymbol{D}$ the punctured curve. In this section, we study the irregular Riemann--Hilbert correspondence for connected complex reductive groups $G$ on $X_{\boldsymbol{D}}$ in both unramified and ramified cases. Fixing a collection of irregular types $\boldsymbol{Q} = \{Q_x, x \in \boldsymbol{D}\}$, we prove the equivalence of the following three categories on $X_{\boldsymbol{D}}$
\begin{itemize}
    \item the category of $G$-connections with irregular type $\boldsymbol{Q}$;
    \item the category of Stokes $G$-local systems with irregular type $\boldsymbol{Q}$;
    \item the category of Stokes $G$-representations with irregular type $\boldsymbol{Q}$.
\end{itemize}
In \S\ref{subsect_lo_corr}, we first study the correspondence on a punctured disc $\mathbb{D}^*$, and then in \S\ref{subsect_glo_corr}, we prove the equivalence of categories on $X_{\boldsymbol{D}}$. In \S\ref{subsect_Sto_G_rep}, we give an equivalent description of the space of Stokes $G$-representations, which will be used to construct the moduli space of filtered Stokes $G$-local systems in \S\ref{sect_moduli}.

\subsection{Local Correspondence}\label{subsect_lo_corr}
We fix some notations first
\begin{align*}
    & R = \mathbb{C}\{z\}, \quad \quad \widehat{R} = \mathbb{C}[\![z]\!], \\
    & K = \mathbb{C}(\!\{z\}\!), \quad \widehat{K} = \mathbb{C}(\!(z)\!).
\end{align*}
Sometimes we add subscript `z' or `w' to emphasize the local coordinate, for instance, $R_z = \mathbb{C} \{z\}$ and $R_w = \mathbb{C} \{w\}$. Let $\mathbb{D}= \Spec \, R$ and $\widehat{\mathbb{D}}= \Spec \, \widehat{R}$ be the disc and formal disc, respectively, and then
\begin{align*}
    & \mathbb{D}^{\ast}: \text{ punctured disc}, \\
    & \widehat{\mathbb{D}}^{\ast}: \text{ formal punctured disc}.
\end{align*}
Let $G$ be a connected complex reductive group with a given maximal torus $T$. Let $\mathfrak{g}$ (resp. $\mathfrak{t}$) be the Lie algebra of $G$ (resp. $T$). Denote by $\mathcal{R}$ the set of roots.

Let $V$ be a $G$-bundle on $\mathbb{D}^{\ast}$ with a $G$-connection $\nabla$ and a $G$-connection in this paper is always assumed to be algebraic. To simplify the terminology, such a pair $(V,\nabla)$ is also called a $G$-connection. With respect to the local coordinate `z', we write $\nabla$ as
\begin{align*}
    \nabla = d + A(z)dz,
\end{align*}
where $d$ is the exterior differential and $A(z) \in \mathfrak{g}(K)$ is the connection form. Note that the connection form $A(z)$ of $\nabla$ is in $\mathfrak{g}(K)$ because we are working on connections with irregular singularities. Two $G$-connections $\nabla_1= d + A_1(z)dz$ and $\nabla_2 = d+ A_2(z)dz$ are \emph{(gauge) equivalent} if there exists $g \in G(K)$ such that $g \circ \nabla_1 = \nabla_2$, i.e.
\begin{align*}
    A_2(z) = - g' \cdot g^{-1} + g A_1(z) g^{-1}.
\end{align*}
Moreover, we say that $\nabla_1$ and $\nabla_2$ are \emph{formally (gauge) equivalent} if there exists an element $g \in G(\widehat{K})$ such that $g \circ \nabla_1 = \nabla_2$.

Now given a $G$-connection $\nabla = d + A(z)dz$, we consider the following set
\begin{align*}
    \widehat{G}(\nabla) := \{ g \in G(\widehat{K}) \, | \, g \circ \nabla \emph{ is a $G$-connection}  \}.
\end{align*}
Equivalently, this set can be regarded as
\begin{align*}
    \widehat{G}(\nabla) = \{ g \in G(\widehat{K}) \, | \, - g' \cdot g^{-1} + g A(z)g^{-1} \in \mathfrak{g}(K) \}.
\end{align*}
Clearly,
\begin{align*}
    G(K) \subseteq \widehat{G}(\nabla) \subseteq G(\widehat{K}).
\end{align*}
The quotient set $\widehat{G}(\nabla) / G(K)$ classifies all $G$-connections which are formally equivalent to $\nabla$. Moreover, taking an arbitrary element $\nabla' \in \widehat{G}(\nabla)$, we have $\widehat{G}(\nabla) = \widehat{G}(\nabla')$, and thus $\widehat{G}(\nabla) / G(K) = \widehat{G}(\nabla') / G(K)$, which implies that the quotient does not depend on the choice of representatives.

When $G = {\rm GL}_n(\mathbb{C})$, the Malgrange--Sibuya isomorphism theorem \cite{Mal78,Mal83,Sib77} describes $\widehat{G}(\nabla) / G(K)$ as a non-abelian cohomological set. The argument can be applied to complex reductive groups in the same way. We briefly review the setup and state the result. For convenience, let $\mathbb{D} = \mathbb{C}$ and denote by $\widetilde{\mathbb{D}} \rightarrow \mathbb{D}$ the real oriented blowup of $\mathbb{D}$ at $0 \in \mathbb{D}$, i.e. $\widetilde{\mathbb{D}} = S^1 \times [0,+ \infty )$. Let $U$ be an open subset of $S^1$, and we define an open subset
\begin{align*}
    \widetilde{U} = \{ (\rho, \theta) \in \widetilde{\mathbb{D}} \, | \, \rho >0 , \,  \theta \in U\}
\end{align*}
of $\widetilde{\mathbb{D}}$. Now, we define a nonabelian sheaf $\Lambda_\nabla$ on $S^1$, of which a germ $g$ at $\theta \in S^1$ is an (holomorphic) element in $G(\mathcal{O}(\widetilde{U}))$ such that
\begin{itemize}
    \item $g$ is asymptotic to the identity on $\widetilde{U}$ at $0$;
    \item $g \circ \nabla = \nabla$.
\end{itemize}
We refer the reader to \cite[\S 3]{Mal83} and \cite[I.2]{Lod94} for more details about the construction of this sheaf. With a similar argument as in the case of $G={\rm GL}_n(\mathbb{C})$, we have the following Malgrange--Sibuya theorem for reductive groups:

\begin{thm}[Theorem A.1 in \cite{Boa14}]\label{thm_boa14A.1}
There exists a bijection between the sets $\widehat{G}(\nabla) / G(K)$ and $H^1(S^1 , \Lambda_\nabla)$.
\end{thm}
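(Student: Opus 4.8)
The plan is to mimic the classical Malgrange--Sibuya proof for $\mathrm{GL}_n(\mathbb{C})$, reducing the reductive case to it through a faithful representation $G \hookrightarrow \mathrm{GL}_n(\mathbb{C})$ and tracking that every analytically constructed object remains valued in $G$. Fix $\hat g \in \widehat{G}(\nabla)$ and set $\nabla' := \hat g \circ \nabla$, a genuine (convergent) $G$-connection formally equivalent to $\nabla$. The map $\Phi \colon \widehat G(\nabla)/G(K) \to H^1(S^1,\Lambda_\nabla)$ will be built by sectorial lifting. First I would cover $S^1$ by finitely many open arcs $U_i$, narrow enough, and on each corresponding sector $\widetilde{U_i}$ produce a holomorphic gauge transformation $g_i \in G(\mathcal O(\widetilde{U_i}))$ that is asymptotic to the formal series $\hat g$ and satisfies $g_i \circ \nabla = \nabla'$. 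The existence of such sectorial lifts is the main analytic input: via the Borel--Ritt lemma one realizes $\hat g$ asymptotically on each sector, and the asymptotic existence theorem (Hukuhara--Turrittin--Sibuya) upgrades this to an exact holomorphic solution of the gauge equation conjugating $\nabla$ to $\nabla'$ sectorwise.

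Second, on each overlap $U_{ij} = U_i \cap U_j$ I would set $c_{ij} := g_i^{-1} g_j$. A direct computation using $g_i \circ \nabla = g_j \circ \nabla = \nabla'$ shows $c_{ij} \circ \nabla = \nabla$, while the common asymptotic expansion $\hat g$ of $g_i$ and $g_j$ forces $c_{ij}$ to be asymptotic to the identity; hence $c_{ij} \in \Lambda_\nabla(U_{ij})$. The cocycle identity $c_{ij}c_{jk} = c_{ik}$ is immediate, so $(c_{ij})$ defines a class in $H^1(S^1,\Lambda_\nabla)$. I would then check independence of all choices: replacing $g_i$ by another sectorial lift amounts to $g_i \mapsto g_i \lambda_i$ with $\lambda_i \in \Lambda_\nabla(U_i)$, producing the cohomologous cocycle $\lambda_i^{-1} c_{ij}\lambda_j$; and replacing $\hat g$ by $\hat g h$ with $h \in G(K)$ leaves $\nabla'$ in the same convergent gauge class and modifies the lifts by a convergent factor whose asymptotic contribution is trivial, giving the same class. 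This makes $\Phi$ well defined on the quotient.

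For injectivity I would argue that if the cocycle $(c_{ij})$ is a coboundary, say $c_{ij} = \lambda_i^{-1}\lambda_j$ with $\lambda_i \in \Lambda_\nabla(U_i)$, then the corrected maps $g_i \lambda_i^{-1}$ agree on overlaps and glue to a single holomorphic gauge transformation $\gamma$ defined on a full deleted neighborhood of $0$, still asymptotic to $\hat g$ and still conjugating $\nabla$ to $\nabla'$. Being single-valued and of moderate growth, $\gamma$ extends meromorphically across $0$ by a Riemann-type removable-singularity argument, so it lies in $G(K)$ and has asymptotic expansion $\hat g$; this forces $\hat g \in G(K)$, i.e. the class in $\widehat G(\nabla)/G(K)$ is trivial. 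Comparing two representatives in this way yields injectivity of $\Phi$.

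For surjectivity I would realize an arbitrary cocycle $(c_{ij})$ by a noncommutative gluing: using the $c_{ij}$ as transition data between the canonical sectorial fundamental solutions of $\nabla$, one patches them into a single-valued connection $\nabla'$ on the punctured disc whose formal type agrees with that of $\nabla$, realizing $(c_{ij})$ as its Stokes data and producing an element of $\widehat G(\nabla)$ mapping to the given class; this is Malgrange's realization construction. The main obstacle throughout is not the combinatorics of the \v{C}ech argument, which is identical to the $\mathrm{GL}_n(\mathbb{C})$ case, but ensuring that each analytically produced object---the sectorial lifts $g_i$, the glued transformation $\gamma$, and the realizing connection---stays inside $G$ rather than merely $\mathrm{GL}_n(\mathbb{C})$. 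This is handled by the fact that $G$ is a closed subgroup: every such object is asymptotic to an element of $G(\widehat K)$ (the identity, or $\hat g$) and solves a gauge equation whose connection form is $\mathfrak g$-valued, so a holomorphic $\mathrm{GL}_n(\mathbb{C})$-valued solution asymptotic to a $G(\widehat K)$-valued series necessarily takes values in $G$; the Tannakian formulation of \cite[\S 3]{HJ24} furnishes an alternative way to enforce this compatibility.
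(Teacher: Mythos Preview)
The paper does not actually prove this theorem: it is imported from \cite[Appendix]{Boa14}, and the surrounding text merely remarks that the classical Malgrange--Sibuya argument for $\mathrm{GL}_n(\mathbb{C})$ ``can be applied to complex reductive groups in the same way,'' with a later remark stating explicitly that Theorems~\ref{thm_boa14A.1} and~\ref{thm_boa14A.2} are stated without proof. Your proposal is precisely a fleshed-out version of that classical argument---sectorial asymptotic lifts, the \v{C}ech cocycle on overlaps, and Malgrange's realization for surjectivity---together with the observation that closedness of $G$ in $\mathrm{GL}_n(\mathbb{C})$ keeps all constructions $G$-valued. So your approach is exactly what the paper has in mind, just made explicit rather than cited.

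One small point worth tightening in your injectivity step: once the corrected maps $g_i\lambda_i^{-1}$ glue to a single-valued holomorphic $\gamma$ on the punctured disc asymptotic to $\hat g$, the conclusion is not quite that ``$\hat g \in G(K)$'' in the abstract, but that this specific $\gamma$ is meromorphic (since a single-valued holomorphic function with an asymptotic expansion at $0$ has that expansion as its convergent Laurent series), so $\gamma \in G(K)$ and $\hat g$ equals $\gamma$ as formal series. For comparing two classes $[\hat g_1]$ and $[\hat g_2]$ you then apply this to $\hat g_1^{-1}\hat g_2$, as you indicate at the end. This is standard and your sketch is correct; I mention it only because the phrasing ``this forces $\hat g \in G(K)$'' slightly obscures that $\gamma$ is the witness.
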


To give a more precise description of the cohomology $H^1(S^1 , \Lambda_\nabla)$, we first introduce \emph{irregular types}. An element in the following form
\begin{align*}
    Q(z) = q_{-n} z^{-n/r} +  \dots + q_{-1} z^{-1/r}
\end{align*}
for some positive integers $n$ and $r$, where $q_{-i} \in \mathfrak{t}$, is called an \emph{irregular type}. Under the substitution $z=w^r$, we have
\begin{align*}
    Q(w) = q_{-n} w^{-n} +  \dots + q_{-1} w^{-1}.
\end{align*}
The substitution $z=w^r$ can be regarded as choosing a covering $\mathbb{D} \rightarrow \mathbb{D}$. An irregular type is called \emph{unramified} if $r=1$, and \emph{ramified} if $r \geq 2$.

\begin{defn}
Given an irregular type $Q$, a $G$-connection $\nabla =  d+ A(z)dz$ is with \emph{irregular type $Q$}, if under the substitution $z=w^r$, the connection $d + A(w)d(w^d)$ is formally equivalent (under the action of $G(\widehat{K}_w)$) to a connection in the form
\begin{align*}
    d + dQ + b_{-1} \frac{dw}{w}
\end{align*}
such that $[Q,b_{-1}] = 0$, where $b_{-1} \in \mathfrak{g}$.
\end{defn}
It has been proven that any $G$-connection is of a certain irregular type.
\begin{thm}[\cite{BV83}]
    Given any $G$-connection $d + A(z)dz$, where $A(z) = a_{-n} z^{-n} + \dots \in \mathfrak{g}(K_z)$, there exists a positive integer $r$ such that under the substitution $z = w^r$, the $G$-connection $d + A(w)dw$ is formally equivalent to $d + B(w)dw$ (under the action of $G(\widehat{K}_w)$), where $B(w) = b_{-n'} w^{-n'} + \dots \in \mathfrak{g}(K_w)$ such that
    \begin{itemize}
        \item $b_{i} \in \mathfrak{t}$ for $i \leq -2$;
        \item $b_i =0$ for $i \geq 0$;
        \item $[b_i, b_j] = 0$.
    \end{itemize}
\end{thm}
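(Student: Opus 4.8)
The statement is the $G$-analogue of the Hukuhara--Turrittin--Levelt formal decomposition, and I would prove it by a double induction: on the pole order $n$ (equivalently the slope) and on the semisimple rank of a Levi subgroup into which the connection gets reduced. The two basic moves are a \emph{constant conjugation} normalizing the leading coefficient, followed by an order-by-order \emph{formal gauge transformation} that removes off-diagonal terms, together with a \emph{shearing transformation} (a twist by a rational cocharacter of $T$, realized after a ramified pullback $z = w^r$) that lowers the irregularity when the leading coefficient fails to separate directions. One could instead fix a faithful representation $G \hookrightarrow \mathrm{GL}(V)$, invoke the classical theorem over $\mathrm{GL}(V)$, and descend the resulting normal form and gauge transformation back to $G$; but the intrinsic argument keeps the $G$-structure manifest and is the route I would follow.

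First I would perform the \emph{Levi reduction}. Writing $A(z) = a_{-n}z^{-n} + \cdots$ and taking the Jordan decomposition $a_{-n} = s + \eta$ with $s$ semisimple, $\eta$ nilpotent and $[s,\eta]=0$, I conjugate $s$ into $\mathfrak{t}$ by a constant element of $G$ and set $\mathfrak{l} = \mathfrak{z}_{\mathfrak{g}}(s) = \mathrm{Lie}(L)$, the Levi containing $T$. Decomposing $\mathfrak{g} = \mathfrak{l} \oplus \mathfrak{m}$ with $\mathfrak{m} = \bigoplus_{\alpha(s)\neq 0}\mathfrak{g}_\alpha$, the operator $\mathrm{ad}(a_{-n})$ is invertible on $\mathfrak{m}$ (its semisimple part $\mathrm{ad}(s)$ is, and $\mathrm{ad}(\eta)$ is a commuting nilpotent). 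Hence, at the top slope, a formal gauge transformation $\prod_j \exp(x_j z^{j})$ with $x_j \in \mathfrak{m}$ kills, order by order, the $\mathfrak{m}$-components of the coefficients of $A$: the obstruction at each step has the form $\mathrm{ad}(a_{-n})(x_j) = (\text{known } \mathfrak{m}\text{-term})$, solvable on $\mathfrak{m}$. This moves the connection form into $\mathfrak{l}(K)$, i.e.\ reduces the structure group to $L$, with leading coefficient $s+\eta \in \mathfrak{l}$ and $s$ central in $\mathfrak{l}$.

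Next I would run the \emph{pole-order/slope reduction} inside $L$. If $\eta = 0$, the central $s \in \mathfrak{t}$ commutes with every coefficient subsequently retained, and I apply the Levi reduction to the next coefficient $a_{-n+1}\in\mathfrak{l}$ (conjugating by an element of $L$, which fixes $s$), passing to a strictly smaller Levi and decreasing the induction rank. If $\eta \neq 0$, the Levi reduction stalls and I invoke the Turrittin step: using a Jacobson--Morozov triple for $\eta$ and the associated rational cocharacter $\lambda \in \mathrm{Hom}(\mathbb{G}_m,T)\otimes_{\mathbb{Z}}\mathbb{Q}$, I pull back along $z = w^r$ (with $r$ clearing the denominator of $\lambda$, so that $w^{\lambda}\in T(\widehat{K}_w)$ after accounting for the Jacobian $r w^{r-1}$) and shear by $w^{\lambda}$; chosen as in Katz's algorithm, this either strictly lowers the slope or makes the new leading coefficient regular semisimple, relaunching a Levi reduction on a strictly smaller group. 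Since the slope is a nonnegative rational with bounded denominator and the Levi rank is a bounded nonnegative integer, the double induction terminates, and a single $r$ (the least common multiple of the finitely many denominators that occur) works throughout. At termination the coefficients $b_i$ with $i \le -2$ lie in $\mathfrak{t}$ and commute pairwise. Finally I remove the holomorphic tail, arranging $b_i = 0$ for $i\ge 0$ and normalizing the residue $b_{-1}$ to commute with the toral polar part, by a holomorphic gauge transformation $\mathrm{Id} + g_1 w + \cdots$: the obstruction at order $w^{j-1}$ is governed by $\mathrm{ad}(b_{-1}) - j\,\mathrm{Id}$, invertible for all large $j$, so the recursion solves formally.

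The main obstacle is the nilpotent case $\eta \neq 0$, where the Levi reduction yields nothing and lowering the slope is forced through a shearing transformation: one must produce a cocharacter whose twist genuinely decreases the irregularity, keep the twist inside $T$ so that the whole reduction stays in $G$ rather than escaping to $\mathrm{GL}(V)$, and control the interaction between the shearing exponents and the residue so that the final $b_{-1}$ can be made to commute with the toral part. This group-theoretic bookkeeping, together with the verification that a uniform ramification index $r$ suffices, is precisely the technical heart carried out by Babbitt--Varadarajan.
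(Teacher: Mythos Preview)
The paper does not give its own proof of this theorem: it is stated as a known result and attributed to Babbitt--Varadarajan \cite{BV83}, with no argument supplied. So there is no ``paper's proof'' to compare against.

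That said, your outline is a faithful sketch of the strategy in \cite{BV83}: constant conjugation to normalize the leading semisimple part, formal gauge transformations to kill off-Levi components order by order, and the shearing/ramified-pullback step (via a Jacobson--Morozov cocharacter) to lower the slope when the leading coefficient has nontrivial nilpotent part, all organized by a double induction on slope and Levi rank. This is exactly the intrinsic $G$-version of the Hukuhara--Turrittin--Levelt algorithm that Babbitt--Varadarajan carry out, and your identification of the nilpotent-leading-term case as the technical crux is correct.
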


\begin{rem}
In the above theorem, although the integer $r$ is not unique, in this paper we always assume that the integer $r$ we choose is the smallest one.
\end{rem}

Now we fix a $G$-connection $\nabla = d + A(z)dz$ with irregular type $Q$. In the case of $G = {\rm GL}_n(\mathbb{C})$, Loday-Richaud gives a constructive description of $H^1(S^1, \Lambda_\nabla)$ from unipotent Lie groups \cite{Lod94}. The arguments can be applied to complex reductive groups as well. In the following, we only give the statement and refer the reader to \cite{Boa14,BY15} for the description.

Given a root $\alpha \in \mathcal{R}$, it determines a meromorphic function $q_\alpha(z) : = \alpha(Q(z))$. A direction $d \in S^1$ is an \emph{anti-Stokes direction} (supported by $\alpha$) if the meromorphic function $\exp(q_\alpha(z))$ has maximal decay as $z$ goes to zero in the direction. Denote by $\mathbb{A}$ the set of all anti-Stokes directions with respect to the given irregular type $Q$. Given an anti-Stokes direction $d \in \mathbb{A}$, let $\mathcal{R}(d) \subseteq \mathcal{R}$ be the subset of roots supporting $d$, and for each $\alpha\in\mathcal{R}$, let $U_\alpha:=\exp(\mathfrak{g}_\alpha)\subseteq G$ be the corresponding unipotent subgroup. Denote by $\mathbb{S}{\rm to}_d$ the image of the product map $\prod_{\alpha \in \mathcal{R}(d)} U_{\alpha} \rightarrow G$, which is a unipotent group \cite[I.4.8]{Lod94}. We define
\begin{align*}
    \mathbb{S}{\rm to}(Q):= \prod_{d \in \mathbb{A}} \mathbb{S}{\rm to}_d.
\end{align*}

\begin{thm}[Theorem A.2 in \cite{Boa14}]\label{thm_boa14A.2}
    There is a bijection
    \begin{align*}
        \mathbb{S}{\rm to}(Q) \rightarrow H^1(S^1 , \Lambda_\nabla ).
    \end{align*}
\end{thm}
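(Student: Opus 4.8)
The plan is to reduce the bijection $\mathbb{S}{\rm to}(Q) \to H^1(S^1,\Lambda_\nabla)$ to the combinatorics of the anti-Stokes directions together with the Malgrange--Sibuya identification of Theorem \ref{thm_boa14A.1}. First I would recall the sectorial structure on $S^1$: the anti-Stokes directions in $\mathbb{A}$ subdivide the circle into finitely many open arcs (sectors), and on each such sector one has the sheaf $\Lambda_\nabla$ of automorphisms of $\nabla$ asymptotic to the identity. The key local input is that, by the Hukuhara--Turrittin normal form for $\nabla$ (guaranteed by the irregular type $Q$), the stalks of $\Lambda_\nabla$ are controlled by the exponential factors $\exp(q_\alpha(z))$ for roots $\alpha\in\mathcal{R}$: a germ asymptotic to the identity can only involve the root subgroup $U_\alpha$ when $\alpha$ supports the relevant direction. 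This is precisely the content of the unipotent description preceding the theorem, so I would invoke it to identify the stalk of $\Lambda_\nabla$ over a direction $d$ with the unipotent group $\mathbb{S}{\rm to}_d = \mathrm{im}\big(\prod_{\alpha\in\mathcal{R}(d)} U_\alpha \to G\big)$, and over a generic (non-anti-Stokes) direction with the trivial group.

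Next I would compute the cohomology $H^1(S^1,\Lambda_\nabla)$ via a \v{C}ech computation adapted to a good cover. The strategy is to choose an open cover $\{\widetilde{U}_i\}$ of $S^1$ by sectors, each containing exactly one anti-Stokes direction $d_i$, arranged cyclically so that consecutive sectors overlap only in regions free of anti-Stokes directions. On such overlaps the sheaf $\Lambda_\nabla$ is trivial (its germ is the identity, since no root supports a generic direction), which forces every \v{C}ech $1$-cocycle to be representable by data supported near the anti-Stokes directions. Concretely, a cohomology class is encoded by assigning to each $d\in\mathbb{A}$ an element of $\mathbb{S}{\rm to}_d$, the Stokes multiplier in that direction; this yields a well-defined map $H^1(S^1,\Lambda_\nabla)\to \prod_{d\in\mathbb{A}}\mathbb{S}{\rm to}_d = \mathbb{S}{\rm to}(Q)$, and running the construction backwards produces the inverse. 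I would verify that the cyclic product is taken in the correct order determined by the circular arrangement of directions, and that two collections of multipliers give the same class iff they coincide, which establishes bijectivity.

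The main obstacle I expect is controlling the passage from the $\mathrm{GL}_n$ case of Loday-Richaud \cite{Lod94} to a general connected complex reductive $G$. For $\mathrm{GL}_n$ the unipotent groups arise from nilpotent matrix blocks, whereas for general $G$ one must show that the product map $\prod_{\alpha\in\mathcal{R}(d)} U_\alpha \to G$ is injective with unipotent image and that the resulting Stokes groups $\mathbb{S}{\rm to}_d$ behave functorially under the asymptotic analysis. The technical heart is that the roots supporting a fixed direction $d$ form a subset closed under addition within $\mathcal{R}(d)$ (so that $\mathbb{S}{\rm to}_d$ is genuinely a group, by \cite[I.4.8]{Lod94}), and that the sheaf-theoretic gluing respects the Bruhat-type decomposition; I would handle this by reducing to a faithful representation $G\hookrightarrow \mathrm{GL}_n(\mathbb{C})$, transporting the established $\mathrm{GL}_n$ description, and checking that the image lands in $G$-valued cochains because the Stokes data are built intrinsically from the root datum of $(G,T)$ relative to $Q$.

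Once these compatibilities are in place, the bijection follows formally by combining the \v{C}ech identification of $H^1(S^1,\Lambda_\nabla)$ with $\mathbb{S}{\rm to}(Q)$; the nontriviality is entirely in the local sectorial analysis, not in the global assembly.
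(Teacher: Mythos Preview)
The paper does not prove this theorem; it is quoted from \cite{Boa14} and, per the remark immediately following the statement, is explicitly stated without proof (with only the observation that Boalch's argument in the unramified case carries over verbatim to the ramified case). So there is no ``paper's own proof'' to compare against beyond the reference to \cite{Boa14} and \cite{Lod94}.

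That said, your sketch contains a genuine error in the local analysis that would make the argument fail as written. You claim that the stalk of $\Lambda_\nabla$ over a non-anti-Stokes direction is the trivial group, and hence that on the overlaps of your good cover the sheaf is trivial, forcing every \v{C}ech $1$-cocycle to be supported at the anti-Stokes directions. This is false. The stalk of $\Lambda_\nabla$ at a direction $\theta$ is (the image of the product of) the $U_\alpha$ for \emph{all} roots $\alpha$ such that $\exp(q_\alpha(z))\to 0$ as $z\to 0$ along $\theta$, not merely those for which the decay is maximal at $\theta$. At a generic direction roughly half of the roots satisfy this decay condition, so the stalk is a large unipotent group, not the identity. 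Consequently the \v{C}ech cocycles on overlaps take values in these large groups, and your computation does not collapse.

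The actual content of the theorem, in Loday-Richaud's treatment for $\mathrm{GL}_n$ and Boalch's for reductive $G$, is a \emph{unique factorization} statement: every class in $H^1(S^1,\Lambda_\nabla)$ admits a unique representative cocycle (the ``Stokes cocycle'') whose value at the overlap associated to the anti-Stokes direction $d$ lies in the smaller group $\mathbb{S}{\rm to}_d$. Establishing this requires showing that the transition element on each overlap, which a priori lies in the larger stalk, factors uniquely into pieces supported on the individual anti-Stokes directions contained in the relevant sector, and that passing to a cohomologous cocycle does not disturb this normal form. This is precisely the nontrivial step you have skipped. Your idea of reducing to $\mathrm{GL}_n$ via a faithful representation is reasonable for transporting the result once it is known there, but it does not let you avoid the factorization argument itself.
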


\begin{rem}
In \cite[Appendix]{Boa14}, although the author only deals with the unramified case, the results and arguments hold for the ramified case. Therefore, we only state Theorem \ref{thm_boa14A.1} and Theorem \ref{thm_boa14A.2} without a proof.
\end{rem}

In \cite{BY15}, the authors define a local system $\mathcal{I}$ on $S^1$, of which sections over sectors are functions in the form $Q=\sum_{i=1}^n q_{-i} z^{-i/r}$, where $q_{-i/r} \in \mathbb{C}$ and $n,r \in \mathbb{N}$. The sheaf $\mathcal{I}$ can be regarded as a vast disjoint union of circle coverings of $S^1$, and each component of $\mathcal{I}$ (i.e. an element in $\pi_0(\mathcal{I})$) is a covering of $S^1$. Given a point $p \in S^1$, the fiber $\mathcal{I}_p$ is a free $\mathbb{Z}$-module. We define a pro-tori
$\mathcal{T}_p:= {\rm Hom}(\mathcal{I}_p, \mathbb{C}^*)$, where a pro-tori is an inverse limit of torus. We obtain a system $\mathcal{T}$ of pro-tori over $S^1$, of which $\mathcal{T}_p$ is the fiber at $p \in S^1$.

\begin{defn}
An \emph{$\mathcal{I}$-graded $G$-local system} on $S^1$ is a $G$-local system $L$ on $S^1$ together with a morphism $\mathcal{T} \rightarrow {\rm Aut}(L)$ of local systems over $S^1$ factoring through an algebraic quotient of $\mathcal{T}$.
\end{defn}

Boalch--Yamakawa proved the following equivalence of categories.

\begin{thm}[Theorem 6 in \cite{BY15}]
The category of $G$-connections on $\widehat{\mathbb{D}}^*$ is equivalent to the category of $\mathcal{I}$-graded $G$-local systems on $S^1$.
\end{thm}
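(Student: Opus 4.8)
The plan is to exhibit a pair of quasi-inverse functors between the two categories, using the formal structure theory of $G$-connections as the engine and the sheaves $\mathcal{I}$ and $\mathcal{T}$ purely as bookkeeping devices for the exponential and monodromy data.

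First I would build the functor from $G$-connections to $\mathcal{I}$-graded $G$-local systems. Starting from a formal $G$-connection $(V,\nabla)$ on $\widehat{\mathbb{D}}^*$, I invoke the Babbitt--Varadarajan structure theorem (the second displayed theorem above, \cite{BV83}): after pulling back along the minimal ramified cover $z = w^r$, the connection is formally equivalent to a canonical form $d + dQ + \Lambda\,\tfrac{dw}{w}$ with an irregular type $Q$ and a residue $\Lambda \in \mathfrak{g}$ satisfying $[Q,\Lambda]=0$. The underlying $G$-local system $L$ on $S^1$ is defined to be the one whose monodromy is the product of the formal monodromy induced by the deck transformation $w \mapsto e^{2\pi i/r}w$ of the cover and the exponential $\exp(2\pi i \Lambda)$ of the residue. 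The irregular type $Q$, as the base point rotates once around $S^1$, traces out a closed path in the total space of $\mathcal{I}$, hence defines a local section of $\mathcal{I}$ over $S^1$; dualizing through $\mathcal{T}_p = {\rm Hom}(\mathcal{I}_p,\mathbb{C}^*)$ produces a morphism $\mathcal{T} \to {\rm Aut}(L)$. Because $[Q,\Lambda]=0$, this morphism is compatible with the monodromy, and because $Q$ is a polynomial in $z^{-1/r}$ of bounded degree, the morphism factors through an algebraic (finite-rank torus) quotient of the pro-torus $\mathcal{T}$, exactly as the definition of an $\mathcal{I}$-graded local system demands.

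Conversely, given an $\mathcal{I}$-graded $G$-local system $(L, \mathcal{T} \to {\rm Aut}(L))$, the grading decomposes the fibre of $L$ according to the finitely many local sections of $\mathcal{I}$ in the image, each of which is an irregular type $Q$; on the $Q$-piece the residual monodromy data recovers a logarithm $\Lambda$ commuting with $Q$, and I reassemble the canonical connection $d + dQ + \Lambda\,\tfrac{dw}{w}$. To prove these functors are mutually quasi-inverse, essential surjectivity is immediate from the structure theorem (every connection acquires a canonical form, and every admissible grading is realized by such a form), while full faithfulness reduces to the observation that a horizontal morphism of connections must preserve both the exponential decomposition---matching the grading---and the residue together with the formal monodromy---matching the underlying map of $G$-local systems; the converse implication is the same computation read backwards.

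The hard part will be the ramified case: one has to check that the $\mathbb{Z}/r$ deck-transformation action of the cover $z = w^r$ is matched exactly with the monodromy of $\mathcal{I}$ on $S^1$---recall that the components of $\mathcal{I}$ are precisely the circle coverings of $S^1$---and that the construction is independent of the auxiliary choices (minimality of $r$, the normalization of the logarithm $\Lambda$, and the ordering of anti-Stokes directions). Verifying that the formal monodromy attached to the connection corresponds on the nose to the monodromy of the grading local system, compatibly with the $\mathcal{T}$-action, is the genuine content; once this dictionary is in place the remainder is the classical Hukuhara--Turrittin--Levelt decomposition repackaged in the language of $\mathcal{I}$-gradings.
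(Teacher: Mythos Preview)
The paper does not supply its own proof of this statement: it is quoted verbatim as Theorem 6 of \cite{BY15} and used as a black box, with only a remark afterwards explaining how the irregular type of a connection is read off from the morphism $\mathcal{T}\to{\rm Aut}(L)$. So there is no argument in the present paper against which to compare your proposal.

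That said, your sketch is broadly the right shape and matches the spirit of the Boalch--Yamakawa construction: the formal classification of \cite{BV83} furnishes the exponential factors, these are packaged as sections of $\mathcal{I}$, and the residual regular part gives the monodromy of the underlying local system. Two points would need tightening in a genuine write-up. First, your inverse functor speaks of ``recovering a logarithm $\Lambda$'' from the residual monodromy; logarithms are not unique, and you must check that different choices yield formally isomorphic connections (they do, but this is the content of the regular-singular Riemann--Hilbert correspondence and should be invoked explicitly). Second, functoriality is slightly more delicate than you indicate: the canonical form $d+dQ+\Lambda\,dw/w$ is not canonical on the nose but only up to the action of the centralizer $H$ of $Q$, so to get an honest functor (rather than a map on isomorphism classes) one must either work with the groupoid of framings or verify that morphisms of connections transport correctly through any two choices of normal form. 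Neither issue is fatal, and both are handled in \cite{BY15}; but since the paper under review simply cites that reference, there is nothing further to compare here.
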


\begin{rem}
Let $L$ be an $\mathcal{I}$-graded $G$-local system on $S^1$, and denote by $\nabla$ the corresponding $G$-connection on $\widehat{\mathbb{D}}^*$. We briefly state how irregular types of $\nabla$ corresponds to morphisms $\mathcal{T} \rightarrow {\rm Aut}(L)$. We fix a point $p \in S^1$, and then we obtain a morphism $\mathcal{T}_p \rightarrow {\rm Aut}(L_p) \cong G$ by restricting to $p$. Moreover, we suppose that the image of the morphism lies in the maximal torus $T$ of $G$. Since the image of $\mathcal{T}_p \rightarrow T$ is a quotient of $\mathcal{T}_p$, it is equivalent to a finite rank free $\mathbb{Z}$-submodule of $\mathcal{I}_p$, of which the generators can be regarded as irregular types. In this sense, we say that a $\mathcal{I}$-graded $G$-local system is of \emph{irregular type} $Q$, if the corresponding connection $\nabla$ is with irregular type $Q$. Note that in \cite{BY15}, it is called \emph{irregular classes} of $\mathcal{I}$-graded $G$-local systems, while in this paper, we use the terminology \emph{irregular types} for convenience.
\end{rem}

Recall that $\widetilde{\mathbb{D}}$ is the real oriented blow-up of $\mathbb{D}$ at zero, and the zero point is usually denoted by $x$. Denote by $\partial = S^1$ the boundary circle. We draw a concentric circle (a halo) $\partial'$ on $\widetilde{\mathbb{D}}$, and denote by $\mathbb{H}$ the region between $\partial$ and $\partial'$. In other words, $\mathbb{H}$ is regarded as a tubular neighbourhood of $\partial$ with another boundary circle $\partial'$. We puncture $\partial'$ at $\# \mathbb{A}$ many distinct points and denote them by $\{x_d,\, d \in \mathbb{A}\}$. According to the anti-Stokes directions, we require that all the $\# \mathbb{A}_x$ auxiliary small cilia between each anti-Stokes direction and its nearby puncture do not cross (see the following picture for example).

\begin{center}
    \includegraphics[scale=0.36]{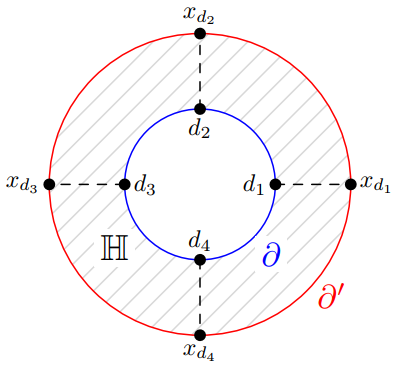}
\end{center}

\noindent Denote by $\mathbb{D}_Q$ the punctured surface obtained in the above way. Moreover, we have
\begin{align*}
    \mathbb{D}_Q \hookrightarrow \widetilde{\mathbb{D}} \rightarrow \mathbb{D}.
\end{align*}

\begin{defn}
    A \emph{Stokes $G$-local system with irregular type $Q$} on $\mathbb{D}^*$ is a $G$-local system $L$ on $\mathbb{D}_Q$ such that $L|_{\mathbb{H}}$ is with irregular type $Q$ and the monodromy around each puncture $x_d$ in $\mathbb{D}_Q$ lies in $\mathbb{S}{\rm to}_d$.
\end{defn}

\begin{rem}
    Since $\mathbb{H}$ is a tubular neighbourhood of $\partial$, the fundamental group of $\mathbb{H}$ is isomorphic to the fundamental group of $\partial$. Then, the category of $G$-local systems on $\mathbb{H}$ is equivalent to the category of $G$-local systems on $\partial$. Thus, a $G$-local system on $\mathbb{H}$ with irregular type $Q$ actually means that the corresponding $G$-local system on $\partial$ is with irregular type $Q$.
\end{rem}

\begin{thm}[Local Correspondence]\label{thm_local_con_sys}
    The category of $G$-connections with irregular type $Q$ on $\mathbb{D}^*$ is equivalent to the category of Stokes $G$-local systems with irregular type $Q$ on $\mathbb{D}^*$.
\end{thm}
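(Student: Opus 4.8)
The plan is to establish the equivalence by composing two categorical equivalences, using the local data assembled on the punctured disc as a bridge. First I would package the connection-to-cohomology dictionary already recorded above. Given a $G$-connection $\nabla = d + A(z)dz$ with irregular type $Q$ on $\mathbb{D}^*$, Theorem \ref{thm_boa14A.1} identifies the set of formal equivalence classes $\widehat{G}(\nabla)/G(K)$ with $H^1(S^1, \Lambda_\nabla)$, and Theorem \ref{thm_boa14A.2} identifies the latter with $\mathbb{S}{\rm to}(Q) = \prod_{d \in \mathbb{A}} \mathbb{S}{\rm to}_d$. The point is that a $G$-connection with irregular type $Q$ is determined, up to the relevant equivalence, by its formal type $Q$ together with its \emph{Stokes data}, namely a tuple $(S_d)_{d \in \mathbb{A}}$ with $S_d \in \mathbb{S}{\rm to}_d$; morphisms of connections correspond to pairs $(g, (S_d))$ where $g$ is a formal isomorphism intertwining the Stokes data. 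So the first step is to make this equivalence functorial: I would check that the bijections of Theorems \ref{thm_boa14A.1} and \ref{thm_boa14A.2} upgrade to an equivalence between the category of $G$-connections with irregular type $Q$ on $\mathbb{D}^*$ and a ``Stokes category'' whose objects are the formal type $Q$ equipped with a tuple $(S_d)_{d \in \mathbb{A}} \in \prod_{d} \mathbb{S}{\rm to}_d$ and whose morphisms are the formal-monodromy/formal-isomorphism data compatible with the $S_d$.

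Next I would match this Stokes category with the category of Stokes $G$-local systems with irregular type $Q$ on $\mathbb{D}^*$, which by definition are $G$-local systems $L$ on the punctured surface $\mathbb{D}_Q$ with $L|_{\mathbb{H}}$ of irregular type $Q$ and monodromy around each puncture $x_d$ lying in $\mathbb{S}{\rm to}_d$. The key observation is that $\mathbb{D}_Q$ is homotopy equivalent to a wedge-type configuration whose fundamental group(oid) is generated by the loops around the $\#\mathbb{A}$ punctures $x_d$ together with the loop around the central point $x$; the relation in $\pi_1(\mathbb{D}_Q)$ expresses the full boundary monodromy as an ordered product of the $x_d$-monodromies times the formal monodromy. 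Thus specifying such a local system $L$ is the same as specifying, for each $d \in \mathbb{A}$, an element $S_d \in \mathbb{S}{\rm to}_d$ (the monodromy at $x_d$), together with the formal monodromy carried by $L|_{\mathbb{H}}$, which the irregular-type-$Q$ condition on $\mathbb{H}$ pins down exactly as the formal data above. A morphism of Stokes $G$-local systems is an isomorphism of $G$-local systems over $\mathbb{D}_Q$, which restricts to a morphism on $\mathbb{H}$ (hence on $\partial$, where it is the formal isomorphism) and must conjugate each monodromy $S_d$ compatibly. Step by step I would (i) fix a base point and a spanning set of paths in $\mathbb{D}_Q$ subordinate to the cilia, (ii) read off the holonomy representation, and (iii) verify that the resulting tuple $((S_d)_d, \text{formal data})$ is exactly an object of the Stokes category, with morphisms matching on the nose, so that this assignment is an equivalence.

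Composing the two equivalences then yields the desired equivalence between $G$-connections with irregular type $Q$ and Stokes $G$-local systems with irregular type $Q$ on $\mathbb{D}^*$. For the ramified case ($r \geq 2$) I would pass through the cover $z = w^r$: after pulling back to the $w$-disc the connection becomes unramified with irregular type $Q(w)$, and the deck group $\mathbb{Z}/r$ acts on all the data; I would check the equivalence is equivariant and then descend, so that Stokes $G$-local systems on $\mathbb{D}^*_z$ correspond to $\mathbb{Z}/r$-equivariant Stokes $G$-local systems on the $w$-cover, exactly matching the ramified connections. The main obstacle I anticipate is the functoriality and naturality in the first step: the cited theorems are stated as bijections of sets, so the real work is promoting them to a genuine equivalence of categories, i.e. verifying that formal isomorphisms and the identifications of Stokes tuples are compatible with composition and conjugation, and that the summation convention for which roots support a given anti-Stokes direction $d$ (hence the precise definition of $\mathbb{S}{\rm to}_d$ and the ordered product $\prod_{d \in \mathbb{A}} \mathbb{S}{\rm to}_d$) is respected by both the analytic Stokes multipliers and the topological monodromies around the $x_d$. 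Once the bookkeeping of base points, ordering of directions, and the boundary relation in $\pi_1(\mathbb{D}_Q)$ is set up carefully, the equivalence follows formally.
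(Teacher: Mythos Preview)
Your proposal is correct and rests on the same two inputs the paper uses, Theorems \ref{thm_boa14A.1} and \ref{thm_boa14A.2}, together with the topological description of $\mathbb{D}_Q$. The packaging differs slightly: the paper does not pass through an intermediate ``Stokes category'' but instead gives a direct gluing construction---for a connection $\nabla'$ one takes its honest local system $L'$ on $\mathbb{D}^*$, the local system $L_0$ of a fixed reference $\nabla_0$ on $\mathbb{H}$, and glues them across the punctures $x_d$ using the Stokes tuple $(\gamma_d)\in\mathbb{S}\mathrm{to}(Q)$; the inverse simply restricts $L$ to $\mathbb{D}^*$ and to $\mathbb{H}$ and reads off the $\gamma_d$ as the puncture monodromies. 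Also, the paper handles the ramified case uniformly, since (as noted in the remark preceding Theorem \ref{thm_local_con_sys}) Theorems \ref{thm_boa14A.1} and \ref{thm_boa14A.2} are already valid for ramified $Q$, so no separate cover-and-descent argument is needed. Your concern about upgrading the set-level bijections to a categorical equivalence is legitimate bookkeeping, but the paper's proof is equally terse on this point and effectively leaves the morphism-level check implicit.
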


\begin{proof}
We fix a $G$-connection $\nabla_0$ with irregular type $Q$. Then, isomorphism classes of $G$-connections with irregular type $Q$ are classified by $\widehat{G}(\nabla_0) / G(K)$. As we discussed above Theorem \ref{thm_boa14A.1}, the quotient set does not depend on the choice of $\nabla_0$. Given an arbitrary $G$-connection $\nabla'$ with irregular type $Q$, denote by $L'$ (resp. $L_0$) the corresponding $G$-local system of $\nabla'$ (resp. $\nabla_0$). Moreover, we regard $L_0$ as a $G$-local system on $\mathbb{H}$, while $L'$ as a $G$-local system on $\mathbb{D}^*$. By Theorem \ref{thm_boa14A.1} and \ref{thm_boa14A.2}, there is a bijection between $\mathbb{S}{\rm to}(Q) = \prod_{d \in \mathbb{A}} \mathbb{S}{\rm to}_d$ and $\widehat{G}(\nabla_0) / G(K)$. Denote by $(\gamma_d)_{d \in \mathbb{A}}$ the corresponding elements of $\nabla'$ in $\prod_{d \in \mathbb{A}} \mathbb{S}{\rm to}_d$, where $\gamma_d \in \mathbb{S}{\rm to}_d$. Then, we glue $L_0$ and $L'$ via $\gamma_d$ around each puncture $x_d$. Thus, we obtain a $G$-local system on $\mathbb{D}_Q$, which is clear a Stokes $G$-local system with irregular type $Q$ on $\mathbb{D}^*$.

    On the other hand, given a Stokes $G$-local system $L$ with irregular type $Q$ on $\mathbb{D}_Q$, we obtain two $G$-local systems $L'$ and $L_0$ by taking restrictions to $\mathbb{D}^*$ and $\mathbb{H}$ respectively.  Under isomorphisms, we suppose that $L_0$ is the $G$-local system given by the $G$-connection $\nabla_0$. Since $L$ is a Stokes $G$-local system with irregular type $Q$, the monodromy $\gamma_d$ around puncture $x_d$ for each $d \in \mathbb{A}$ gives an element $(\gamma_d)_{d \in \mathbb{A}} \in \mathbb{S}{\rm to}(Q)$. Therefore, we obtain a $G$-connection with irregular type $Q$.
\end{proof}

\subsection{Global Correspondence}\label{subsect_glo_corr}
Let $X$ be a connected smooth projective algebraic curve over $\mathbb{C}$. Let $\boldsymbol{D}$ be a given collection of finitely many distinct points on $X$, which is also regarded as a reduced effective divisor on $X$, and denote by $X_{\boldsymbol{D}}:=X \backslash \boldsymbol{D}$ the punctured curve, which is also called a noncompact curve. Let $\widetilde{X}$ be the real oriented blow-up of $X$ at each puncture $x \in \boldsymbol{D}$. It is equivalent to consider that $\widetilde{X}$ is obtained from $X$ by replacing each puncture $x \in \boldsymbol{D}$ by an oriented boundary circle $\partial_x$, of which points are considered to be oriented directions emanating from $x$. Now we equip each puncture $x \in \boldsymbol{D}$ with an irregular type $Q_x$, and denote by $\boldsymbol{Q} = \{Q_x, \, x \in \boldsymbol{D}\}$ the collection with irregular types. For each $x \in \boldsymbol{D}$, let $\mathbb{A}_x$ be the set of anti-Stokes directions of $Q_x$. Then, we draw a concentric circle (a halo) $\partial'_x$ on $\widetilde{X}$ near $\partial_x$. Denote by $\mathbb{H}_x$ the region between $\partial_x$ and $\partial'_x$, which is a tubular neighbourhood of $\partial_x$. Then, we puncture $\partial'_x$ at $\# \mathbb{A}_x$ distinct points according to anti-Stokes directions such that all the $\# \mathbb{A}_x$ auxiliary small cilia between each anti-Stokes direction and its nearby puncture do not cross. Denote by $\{x_d, \, d \in \mathbb{A}_x\}$ the collection of punctures with respect to point $x \in \boldsymbol{D}$. Finally, let $X_{\boldsymbol{Q}} \hookrightarrow \widetilde{X}$ be the punctured surface obtained in the above way, which is called the \emph{irregular curve} given by $\boldsymbol{Q}$.

\begin{defn}
A \emph{Stokes $G$-local system with irregular type $\boldsymbol{Q}$} on $X_{\boldsymbol{D}}$ is a $G$-local system $L$ on $X_{\boldsymbol{Q}}$ such that for each puncture $x \in \boldsymbol{D}$, the restriction $L|_{\mathbb{H}_x}$ is with irregular type $Q_x$ (up to isomorphism) and the monodromy around each puncture $x_d$ lies in the Stokes group $\mathbb{S}{\rm to}_d$ for every $d \in \mathbb{A}_x$.
\end{defn}

\begin{thm}[Global Correspondence]\label{thm_global_con_sys}
The category of $G$-connections with irregular type $\boldsymbol{Q}$ on $V$, is equivalent to the category of Stokes $G$-local systems with irregular type $\boldsymbol{Q}$ on $X_{\boldsymbol{D}}$.
\end{thm}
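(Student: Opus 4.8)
The plan is to globalize the local correspondence (Theorem \ref{thm_local_con_sys}) by a gluing argument: the equivalence of categories is already known on each punctured disc $\mathbb{D}^*_x$ around a point $x \in \boldsymbol{D}$, and away from $\boldsymbol{D}$ the ordinary Riemann--Hilbert correspondence identifies $G$-connections on $X_{\boldsymbol{D}}$ with $G$-local systems. Thus the strategy is to show that both sides form stacks (or at least that morphisms glue) over a suitable cover of $X_{\boldsymbol{D}}$ consisting of the punctured discs $\mathbb{D}^*_x$ together with the complement $X^\circ := X_{\boldsymbol{D}} \setminus \bigcup_x \overline{\mathbb{D}}_x$, and that the transition data on the overlaps $\mathbb{D}^*_x \cap X^\circ$ (punctured annuli, on which both functors reduce to the classical regular-singular correspondence) match.

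Concretely, I would proceed as follows. First, fix a $G$-connection $(V,\nabla)$ on $X_{\boldsymbol{D}}$ with irregular type $\boldsymbol{Q}$. Restricting to each $\mathbb{D}^*_x$ produces a $G$-connection with irregular type $Q_x$, to which Theorem \ref{thm_local_con_sys} assigns a Stokes $G$-local system on $\mathbb{D}_{Q_x}$, i.e. a $G$-local system on $\mathbb{H}_x$ with the prescribed Stokes monodromies around the punctures $x_d$. Restricting $\nabla$ to $X^\circ$ and applying ordinary Riemann--Hilbert yields a $G$-local system $L^\circ$ on $X^\circ$. On the overlap annulus one compares the flat sections: the analytic continuation of flat sections of $\nabla$ over $X^\circ$ agrees with the restriction of the local Stokes data to $\partial'_x$, because both are computed from the same multivalued horizontal sections of $\nabla$ near $x$. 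Hence $L^\circ$ and the local systems $L|_{\mathbb{H}_x}$ glue canonically along $\partial'_x$ to a single $G$-local system on the glued surface $X_{\boldsymbol{Q}}$, which by construction satisfies the irregular-type and Stokes-monodromy conditions, so it is a Stokes $G$-local system with irregular type $\boldsymbol{Q}$.

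For the reverse direction and for functoriality I would run the same gluing in the opposite order: given a Stokes $G$-local system $L$ on $X_{\boldsymbol{Q}}$, restrict to each $\mathbb{H}_x$ and to $X^\circ$, invoke the local equivalence of Theorem \ref{thm_local_con_sys} to produce $G$-connections $(V_x,\nabla_x)$ on each $\mathbb{D}^*_x$, and use ordinary Riemann--Hilbert on $X^\circ$ to produce a $G$-connection there; these are glued along the annuli using the matching of monodromy/flat-section data. Morphisms of Stokes $G$-local systems are handled identically, since a morphism is determined by its restrictions to the pieces of the cover and these restrictions are compatible on overlaps, giving full faithfulness; essential surjectivity is exactly the gluing construction. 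One checks that the two directions are mutually quasi-inverse by verifying that gluing then restricting (and vice versa) recovers the original object up to canonical isomorphism, which follows from the corresponding statement in the local case together with the sheaf property of $G$-local systems and of connections on $X_{\boldsymbol{D}}$.

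I expect the main obstacle to be the careful comparison on the overlap regions, namely verifying that the ``halo'' identification built into the local statement (the passage from $\mathbb{D}^*_x$ to $\mathbb{H}_x$ inside $\widetilde{\mathbb{D}}$, with flat sections asymptotic to the identity) is compatible with the global real oriented blow-up $\widetilde{X}$ and with the classical correspondence on $X^\circ$. In particular one must ensure that the irregular-type condition imposed on $L|_{\mathbb{H}_x}$ is an intrinsic condition that is preserved under the gluing, and that no monodromy information is lost or double-counted where the cilia meet $\partial'_x$. Once this local-to-global compatibility is established, the remainder is a formal descent/gluing argument for the stack of $G$-local systems and the stack of meromorphic $G$-connections.
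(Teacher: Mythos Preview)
Your proposal is correct and takes essentially the same approach as the paper: the paper's proof is a single sentence stating that the global correspondence is an immediate consequence of the local correspondence (Theorem~\ref{thm_local_con_sys}), and what you have written is precisely the gluing argument that makes that sentence precise. Your detailed treatment of the overlap regions and the descent/gluing formalism is a reasonable elaboration of what the paper leaves implicit.
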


\begin{proof}
    This is an immediate result of the local correspondence (Theorem \ref{thm_local_con_sys}).
\end{proof}

If we do not fix a specific irregular type, then we have the following correspondence, which is regarded as the $G$-version of the classical irregular Riemmann--Hilbert correspondence of Deligne, Malgrange, Sibuya, Loday-Richaud \cite{Lod94,Mal83,Sib77}. Moreover, Hohl and Jakob give a different proof of Theorem \ref{thm_global_con_sys} via Tannakian categories \cite[\S 3]{HJ24}.

\begin{cor}
    The category of $G$-connections on $X_{\boldsymbol{D}}$ is equivalent to the category of Stokes $G$-local systems on $X_{\boldsymbol{D}}$.
\end{cor}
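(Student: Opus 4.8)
The plan is to deduce the unfixed-type correspondence from the fixed-type Global Correspondence (Theorem \ref{thm_global_con_sys}) by stratifying both categories according to irregular type. The essential point is that the equivalence already established for each fixed $\boldsymbol{Q}$ is compatible with variation of $\boldsymbol{Q}$, so that assembling the equivalences across all irregular types yields the desired statement. First I would observe that the category of $G$-connections on $X_{\boldsymbol{D}}$ decomposes as a disjoint union (over the choice of a collection of irregular types $\boldsymbol{Q}$, up to the appropriate equivalence) of the full subcategories of $G$-connections with fixed irregular type $\boldsymbol{Q}$: by the theorem of \cite{BV83} quoted above, every $G$-connection $d + A(z)dz$ at each puncture is formally equivalent, after a ramified substitution $z = w^r$, to a canonical form, and hence possesses a well-defined irregular type $Q_x$ at each $x \in \boldsymbol{D}$. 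Collecting these gives a well-defined $\boldsymbol{Q}$, and morphisms between connections of distinct irregular types must vanish since a gauge equivalence preserves the formal type.

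Second, I would carry out the analogous stratification on the Betti side. A Stokes $G$-local system carries its irregular type as part of the data: the restriction $L|_{\mathbb{H}_x}$ determines $Q_x$ up to isomorphism, and the locations of the auxiliary punctures $x_d$ together with the Stokes groups $\mathbb{S}{\rm to}_d$ are governed by the anti-Stokes directions $\mathbb{A}_x$ of $Q_x$. Thus the category of Stokes $G$-local systems on $X_{\boldsymbol{D}}$ (where no type is fixed) is likewise the disjoint union over $\boldsymbol{Q}$ of the categories of Stokes $G$-local systems with fixed irregular type $\boldsymbol{Q}$. Once both sides are written as disjoint unions indexed by the same set of irregular types, the unfixed correspondence is simply the coproduct of the fixed-type equivalences of Theorem \ref{thm_global_con_sys}, applied termwise.

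The step I expect to require the most care is making precise the index set of the stratification and checking that the two stratifications match under the equivalence. Concretely, one must verify that two connections have the same irregular type if and only if the corresponding Stokes $G$-local systems live on the same irregular curve $X_{\boldsymbol{Q}}$ (equivalently, have isomorphic restrictions to the halos $\mathbb{H}_x$), and that there are no morphisms across distinct strata on either side; this guarantees that the disjoint-union decompositions are genuine and that the termwise equivalences glue to a single equivalence of categories. Since the irregular type is a formal (not merely topological) invariant, the bookkeeping amounts to checking that the assignment $\nabla \mapsto \boldsymbol{Q}$ on the de Rham side agrees with $L \mapsto \boldsymbol{Q}$ on the Betti side under the functor of Theorem \ref{thm_global_con_sys}, which is immediate from the construction of that functor via the local gluing data $(\gamma_d)_{d \in \mathbb{A}_x}$. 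Granting this compatibility, the corollary follows formally, and the proof is short.
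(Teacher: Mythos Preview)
Your proposal is correct and matches the paper's approach: the paper gives no explicit proof, simply presenting the corollary as what one obtains from Theorem \ref{thm_global_con_sys} ``if we do not fix a specific irregular type,'' and your stratify-by-$\boldsymbol{Q}$-and-take-the-coproduct argument is precisely the implicit reasoning behind that remark. The care you flag about matching the two stratifications is appropriate but, as you note, immediate from the construction of the functor in Theorem \ref{thm_local_con_sys}/\ref{thm_global_con_sys}.
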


Now we will introduce the fundamental groupoid of $X_{\boldsymbol{Q}}$ and show that the category of Stokes $G$-local systems with irregular type $\boldsymbol{Q}$ on $X_{\boldsymbol{D}}$ is equivalent to the category of specific $G$-representations of the fundamental groupoid of $X_{\boldsymbol{Q}}$. We first introduce two sets $H(\partial)$ and $H$ determined by an irregular type. Given an irregular type $Q$, denote by $H$ the centralizer of $Q = q_n z^{-n/r} +  \dots + q_{-1} z^{-1/r}$ in $G$. More precisely,
\begin{align*}
    H=\{ k \in G \, | \, [k, q_{-i}] = 0 \text{ for each $i$.}  \}
\end{align*}
Denote by $H(\partial) \subseteq G$ the subset of formal monodromies given by $Q$. In the unramfied case, $H=H(\partial)$. Boalch--Yamakawa proved the following result:
\begin{lem}[Lemma 15 in \cite{BY15}]
The $(H\times H)$-action on $H(\partial)$ via $(h_1,h_2)(m) = h_1 m h_2$ gives a $H$-bitorsor structure on $H(\partial)$.
\end{lem}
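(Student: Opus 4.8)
The plan is to exhibit $H(\partial)$ concretely as a single two-sided coset of $H$ inside $G$, and then to read off the bitorsor axioms from elementary group theory. First I would recall the structure of a formal monodromy. After the ramified pullback $z=w^r$, a $G$-connection with irregular type $Q$ is formally equivalent to a normal form $d+dQ+\Lambda\frac{dw}{w}$ with $[Q,\Lambda]=0$, so that $\Lambda\in\mathfrak{h}:=\{X\in\mathfrak{g} : [X,q_{-i}]=0 \text{ for all } i\}=\mathrm{Lie}(H)$. A loop generating $\pi_1(\partial)$ in the $z$-disc lifts to the path $w\mapsto\zeta w$ with $\zeta=\exp(2\pi i/r)$, and correspondingly the formal monodromy factors as the product of (i) a fixed element $\nu\in G$ realising the Galois action on the irregular type, i.e. $\mathrm{Ad}(\nu)Q(w)=Q(\zeta w)$, equivalently $\mathrm{Ad}(\nu)(q_{-i})=\zeta^{-i}q_{-i}$ for every $i$, and (ii) the tame contribution of the term $\Lambda\frac{dw}{w}$, which lies in $H$. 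As the connection ranges over all connections with irregular type $Q$, this tame part sweeps out all of $H$ (this is precisely the assertion $H=H(\partial)$ recorded above in the unramified case, where $\nu$ is trivial). Hence I expect the identification
\[
H(\partial)=\nu H.
\]

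Next I would show that $\nu$ normalises $H$, so that this left coset is simultaneously a right coset. Since $\mathrm{Ad}(\nu)$ multiplies each $q_{-i}$ by the nonzero scalar $\zeta^{-i}$, it fixes each centralizer $Z_G(q_{-i})=Z_G(\zeta^{-i}q_{-i})$, and therefore preserves $H=\bigcap_i Z_G(q_{-i})$. Thus $\nu H\nu^{-1}=H$ and $H(\partial)=\nu H=H\nu$. In particular $H(\partial)$ is stable under left and right multiplication by $H$: for $h_1,h_2\in H$ and $m=\nu h\in H(\partial)$ one has $h_1 m h_2=\nu(\nu^{-1}h_1\nu)h h_2\in\nu H$, so the formula $(h_1,h_2)(m)=h_1 m h_2$ does define an $(H\times H)$-action on $H(\partial)$.

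It then remains to verify freeness and transitivity of each one-sided action; commutativity of the two actions is automatic from associativity in $G$. Freeness is immediate because $H(\partial)\subseteq G$ and $G$ is a group: $h_1 m=m$ forces $h_1=e$, and likewise on the right. For transitivity, given $m=\nu h$ and $m'=\nu h'$ in $H(\partial)$, the element $h_1:=m'm^{-1}=\nu(h'h^{-1})\nu^{-1}$ lies in $\nu H\nu^{-1}=H$ and satisfies $h_1 m=m'$, while $h_2:=m^{-1}m'=h^{-1}h'\in H$ satisfies $m h_2=m'$. Hence both one-sided actions are free and transitive, the actions commute, and $H(\partial)$ is an $(H,H)$-bitorsor.

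The main obstacle I anticipate is not the torsor bookkeeping but securing the coset description $H(\partial)=\nu H$ in the ramified case: one must justify that the tame part of the formal normal form contributes an \emph{arbitrary} element of $H$ (so that $H(\partial)$ is a full coset and not merely a subset of one) and that the Galois representative $\nu$ with $\mathrm{Ad}(\nu)(q_{-i})=\zeta^{-i}q_{-i}$ exists. The first point rests on the classification of formal connections, the tame reduction being a regular singular $H$-connection whose monodromy is otherwise unconstrained; the second on the fact, guaranteed by the existence of a connection with the prescribed ramified irregular type, that the cyclic scaling of the $q_{-i}$ is induced by an inner automorphism of $G$.
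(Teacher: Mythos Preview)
The paper does not prove this lemma; it is quoted without argument as Lemma~15 of \cite{BY15}. Your approach is correct and is essentially the one taken in \cite{BY15}: one identifies $H(\partial)$ with a single coset $\nu H$ where $\nu\in N_G(H)$ encodes the Galois twist of the cyclic cover (so that $\mathrm{Ad}(\nu)q_{-i}=\zeta^{-i}q_{-i}$), and then reads off the bitorsor structure from the elementary fact that a coset $gH$ with $g\in N_G(H)$ is an $(H,H)$-bitorsor under left and right multiplication. The two concerns you flag at the end---existence of such a $\nu$ and the fact that the tame part of the formal monodromy sweeps out all of $H$---are exactly the points addressed in \cite{BY15}, and your anticipated resolutions are the right ones.
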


Now we fix a base point $b_0 \in X_{\boldsymbol{Q}}$, which is not in the boundary circle $\partial_x$ for each $x \in \boldsymbol{D}$. For each boundary circle $\partial_x$ of $X_{\boldsymbol{Q}}$, we choose a base point $b_x$, and denote by $\boldsymbol{b}:=\{b_0 , b_x, x\in\boldsymbol{D}\}$ the set of base points. Let $\Pi_1(X_{\boldsymbol{Q}},\boldsymbol{b})$ be the fundamental groupoid of $X_{\boldsymbol{Q}}$ with $\boldsymbol{b}$ as the set of base points. Here is an explicit description of generators of $\Pi_1(X_{\boldsymbol{Q}},\boldsymbol{b})$:
\begin{enumerate}
\item $\alpha_1,\beta_1,\cdots,\alpha_g,\beta_g$ are loops based at $b_0$ determined by the genus of $X$;
	
\item for each $x\in\boldsymbol{D}$, the simple closed loop $\gamma_x$ based at $b_x$ goes once around $\partial_x$;

\item for each $d\in\mathbb{A}_x$, the loop $\gamma_{x,d}$ based at $b_x$ goes once around the nearby puncture $x_d$ so that $x_d$ is the only puncture inside $\gamma_{x,d}$;

\item for each base point $b_x$, the simple path $\gamma_{0x}$ connects $b_0$ and $b_x$.
\end{enumerate}
For the relations of $\Pi_1(X_{\boldsymbol{Q}},\boldsymbol{b})$, for each $x \in \boldsymbol{D}$, we define
\begin{equation}\label{eq_rela_Pi}\tag{$\ast$}
    \mu_x = \gamma_{0x}^{-1} \cdot \gamma_x \cdot \left( \prod_{d \in \mathbb{A}_x} \gamma_{x,d} \right) \cdot \gamma_{0x},
\end{equation}
which is a loop based at $b_0$. Then, the relation of $\Pi$ is
\begin{align*}
    \left( \prod_{i=1}^g [\alpha_i, \beta_i] \right) \cdot \left( \prod_{x \in \boldsymbol{D}} \mu_x \right) = {\rm id}.
\end{align*}
\noindent In the above setup, $\gamma_x$ is usually regarded as \emph{formal monodromy}, while $\mu_x$ is regarded as \emph{topological monodromy}. In fact, the definition of $\Pi_1(X_{\boldsymbol{Q}},\boldsymbol{b})$ does not depend on the choice of base points, for which use the notation $\Pi$ as an abbreviation. Denote by $\Omega$ the free group generated by generators of $\Pi$ and there is a natural surjection $\Omega \rightarrow \Pi$.

Let $\mathrm{Hom}(\Pi,G)$ be the space of $G$-representations of $\Pi$. An element (point) $\rho\in\mathrm{Hom}(\Pi,G)$ is called a \emph{Stokes $G$-representation with irregular type $\boldsymbol{Q}$} on $X_{\boldsymbol{D}}$ if for each $x\in\boldsymbol{D}$ and $d\in\mathbb{A}_x$, we have $\rho(\gamma_x)\in H(\partial_x)$, which is the set of formal monodromies given by $Q_x$, and $\rho(\gamma_{x,d})\in\mathbb{S}\mathrm{to}_d$. Denote by $\mathrm{Hom}_{\mathbb{S}}(\Pi,G)$ the space of all Stokes $G$-representations with irregular type $\boldsymbol{Q}$, which is a smooth affine variety. Here is an equivalent description of $\mathrm{Hom}_{\mathbb{S}}(\Pi,G)$ with respect to generators and relations of $\Pi$. For each $x \in \boldsymbol{D}$, we define
\begin{align*}
    \mathcal{A}(Q_x) = H(\partial_x) \times \mathbb{S}{\rm to}(Q_x).
\end{align*}
We consider the closed subvariety
\begin{align*}
    {\rm Hom}_{\mathbb{S}}(\Omega, G) := \left( (G\times G)^g\times\prod_{x \in \boldsymbol{D}} (G \times \mathcal{A}(Q_{x})) \right) \subseteq {\rm Hom}(\Omega, G).
\end{align*}
Given a $G$-representation $\rho: \Omega \rightarrow G$, we use the following notations
\begin{align*}
    a_i = \rho(\alpha_i), \ b_i = \rho(\beta_i), \ \rho(\gamma_x) = h_x, \ \rho(\gamma_{x,d}) =  S_{x,d}, \ \rho(\gamma_{0x}) = c_x.
\end{align*}
Then ${\rm Hom}_{\mathbb{S}}(\Pi,G)$ includes all data
\begin{align*}
    ((a_i,b_i)_{1\leq i\leq g}, (c_{x},h_{x},S_{x,d})_{x \in \boldsymbol{D}, d \in \mathbb{A}_x})
    \in {\rm Hom}_{\mathbb{S}}(\Omega,G)
\end{align*}
such that
\begin{align*}
    \left( \prod_{i =1}^g [a_i,b_i] \right) \cdot \left( \prod_{x \in \boldsymbol{D}} (c_{x}^{-1}h_{x} (\prod_{d \in \mathbb{A}_x} S_{x,d}) c_{x}) \right) =\mathrm{id}.
\end{align*}

Recall that $H_x$ is the stabilizer of $Q_x$ for $x \in \boldsymbol{D}$. We define
\begin{align*}
\boldsymbol{H}:=\prod_{x \in \boldsymbol{D}} H_x, \ \ \boldsymbol{H}(\partial):=\prod_{x \in \boldsymbol{D}} H(\partial_x).
\end{align*}
There is a $(G \times \boldsymbol{H})$-action on ${\rm Hom}_{\mathbb{S}}(\Pi,G)$ given as follows:
\begin{align*}
(g,(k_{x}&)_{x \in \boldsymbol{D}})\cdot((a_i,b_i)_{1\leq i\leq g}, (c_{x},h_{x},S_{x,d})_{x \in \boldsymbol{D}, d \in \mathbb{A}_{x}}):=\\
&\hspace{5em}((g a_i g^{-1},g b_i g^{-1})_{1\leq i\leq g}, (k_{x} c_{x} g^{-1},k_{x} h_{x} k^{-1}_{x},k_{x}S_{x,d}k_{x}^{-1})_{x \in \boldsymbol{D},d \in \mathbb{A}_{x}}).
\end{align*}
Under this action, ${\rm Hom}_{\mathbb{S}}(\Pi,G)$ becomes a (twsited) quasi-Hamiltonian $(G \times \boldsymbol{H})$-space with moment map
\begin{align*}
    \mu: {\rm Hom}_{\mathbb{S}}(\Pi,G)&\to G\times \boldsymbol{H}(\partial),\\
    \rho&\mapsto(\prod_{x \in \boldsymbol{D}} \Big(c_{x}^{-1}h_{x}(\prod_{d \in \mathbb{A}_x} S_{x,d}) c_{x}\Big), (h_x^{-1})_{x\in\boldsymbol{D}}).
\end{align*}
As a result, the quotient $\mathcal{M}_{\rm B}(X_{\boldsymbol{D}},G,\boldsymbol{Q}):={\rm Hom}_{\mathbb{S}}(\Pi,G)/\!\!/(G \times \boldsymbol{H})$, which is called \emph{wild character variety}, exhibits a structure of an algebraic Poisson variety with symplectic leaves \cite{Boa14, BY15}. Two Stokes $G$-representations with irregular type $\boldsymbol{Q}$ are isomorphic if they are in the same $(G \times \boldsymbol{H})$-orbit.

\begin{thm}[Theorem A.3 in \cite{Boa14}]\label{thm_boa14A.3}
There is a one-to-one correspondence between $(G \times \boldsymbol{H})$-orbits in ${\rm Hom}_{\mathbb{S}}(\Pi,G)$ and isomorphism classes of Stokes $G$-local systems with irregular type $\boldsymbol{Q}$ on $X_{\boldsymbol{D}}$. Thus, the category of Stokes $G$-local systems with irregular type $\boldsymbol{Q}$ on $X_{\boldsymbol{D}}$ is equivalent to the category of Stokes $G$-representations with irregular type $\boldsymbol{Q}$ on $X_{\boldsymbol{D}}$.
\end{thm}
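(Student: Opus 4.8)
The plan is to deduce the statement from the classical monodromy equivalence between $G$-local systems and representations of the fundamental groupoid, and then to track carefully how the Stokes conditions, and the isomorphisms allowed between such objects, restrict the picture on both sides. First I would invoke the topological Riemann--Hilbert correspondence for the punctured surface $X_{\boldsymbol{Q}}$: parallel transport sends a $G$-local system $L$ on $X_{\boldsymbol{Q}}$ to a functor from $\Pi = \Pi_1(X_{\boldsymbol{Q}},\boldsymbol{b})$ into the groupoid of $G$-torsors. Choosing a trivialization of the fibre $L_b$ at each base point $b \in \boldsymbol{b}$ turns this functor into an honest groupoid homomorphism $\rho \colon \Pi \to G$, i.e. a point of ${\rm Hom}(\Pi,G)$, and two choices of trivialization yield homomorphisms differing by the change-of-trivialization action of $G^{\boldsymbol{b}} = G \times \prod_{x \in \boldsymbol{D}} G$. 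Using the explicit generators $\alpha_i,\beta_i,\gamma_x,\gamma_{x,d},\gamma_{0x}$ and the single relation \eqref{eq_rela_Pi}, this identifies ${\rm Hom}(\Pi,G)$ with the tuples $((a_i,b_i),(c_x,h_x,S_{x,d}))$ subject to $\left( \prod_{i} [a_i,b_i] \right) \cdot \left( \prod_{x} c_x^{-1} h_x \bigl(\prod_{d} S_{x,d}\bigr) c_x \right) = {\rm id}$, which is exactly the equation cutting out ${\rm Hom}_{\mathbb{S}}(\Pi,G)$ inside the local model once the Stokes constraints are imposed.

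Next I would translate the two defining conditions of a Stokes $G$-local system into conditions on $\rho$. Since $\gamma_{x,d}$ is a small loop encircling only the puncture $x_d$, its image is the monodromy of $L$ about $x_d$, so the requirement that this monodromy lie in $\mathbb{S}{\rm to}_d$ reads literally $\rho(\gamma_{x,d}) = S_{x,d} \in \mathbb{S}{\rm to}_d$. For the condition that $L|_{\mathbb{H}_x}$ be of irregular type $Q_x$, I would use the $\mathcal{I}$-graded description of the halo together with the identification of the formal monodromy: the loop $\gamma_x$ around $\partial_x$ carries the formal monodromy of $L|_{\mathbb{H}_x}$, and being of irregular type $Q_x$ is equivalent to $\rho(\gamma_x) = h_x$ lying in the set $H(\partial_x)$ of formal monodromies determined by $Q_x$. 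Imposing these cuts out ${\rm Hom}_{\mathbb{S}}(\Omega,G)$, and the groupoid relation \eqref{eq_rela_Pi} then cuts out ${\rm Hom}_{\mathbb{S}}(\Pi,G)$, so Stokes $G$-local systems correspond precisely to points of ${\rm Hom}_{\mathbb{S}}(\Pi,G)$.

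The crux is to identify the correct isomorphism group as $G \times \boldsymbol{H}$ rather than the full $G^{\boldsymbol{b}}$. An isomorphism of Stokes $G$-local systems is an isomorphism of the underlying local systems preserving the irregular-type structure on each halo $\mathbb{H}_x$; at $b_0$ this imposes no constraint and allows all of $G$, but at each $b_x$ the trivialization may only be changed by an automorphism fixing the grading attached to $Q_x$, which is precisely the centralizer $H_x$ of $Q_x$. I would then verify that such a $k_x \in H_x$ preserves the Stokes data: the $H$-bitorsor structure on $H(\partial_x)$ (Lemma 15 in \cite{BY15}) gives $k_x h_x k_x^{-1} \in H(\partial_x)$, and since $H_x \supseteq T$ centralizes $Q_x$ it preserves each root-decay condition and hence normalizes every Stokes group, so $k_x S_{x,d} k_x^{-1} \in \mathbb{S}{\rm to}_d$. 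This reproduces exactly the stated $(G \times \boldsymbol{H})$-action, and its orbits are the isomorphism classes, giving the claimed bijection.

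Finally, to upgrade the bijection on isomorphism classes to an equivalence of categories, I would observe that a morphism of $G$-local systems is the same as a natural transformation of the associated groupoid representations, namely a family $(g_b)_{b \in \boldsymbol{b}}$ intertwining the two homomorphisms; compatibility with the Stokes structure forces $g_{b_0} \in G$ and $g_{b_x} \in H_x$, so morphisms of Stokes $G$-representations are precisely the intertwiners living in $G \times \boldsymbol{H}$, and the monodromy functor is fully faithful and essentially surjective. The main obstacle I anticipate is the middle step: making rigorous, through the $\mathcal{I}$-graded and formal-monodromy picture of the halo, that ``$L|_{\mathbb{H}_x}$ is of irregular type $Q_x$'' is equivalent to $\rho(\gamma_x) \in H(\partial_x)$, together with the accompanying reduction of the boundary gauge freedom from $G$ to $H_x$; once this local dictionary is established, everything else is formal.
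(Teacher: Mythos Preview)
The paper does not give its own proof of this statement: it is stated as a citation of Theorem A.3 in \cite{Boa14} and no argument appears in the text. Your outline is a correct sketch of the standard argument---the monodromy equivalence between $G$-local systems on $X_{\boldsymbol{Q}}$ and $G$-representations of $\Pi_1(X_{\boldsymbol{Q}},\boldsymbol{b})$, followed by reading off the Stokes constraints on the generators $\gamma_{x,d}$ and $\gamma_x$ and cutting the gauge group at each boundary base point from $G$ down to $H_x$---and this is essentially Boalch's approach in the cited appendix (there for the unramified case, which the present paper remarks extends verbatim to the ramified case). The one point you flag as an obstacle, namely that ``$L|_{\mathbb{H}_x}$ has irregular type $Q_x$'' is equivalent to $\rho(\gamma_x)\in H(\partial_x)$ together with the reduction of the boundary gauge to $H_x$, is exactly the content packaged in \cite{BY15} (their Theorem 6 and Lemma 15, both quoted in the paper without proof), so you have correctly located where the substantive input lies.
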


\subsection{Stokes G-representations}\label{subsect_Sto_G_rep}
In the previous subsection, we follow Boalch's idea to construct the fundamental groupoid $\Pi$ of $X_{\boldsymbol{Q}}$ with respect to a collection of base points $\boldsymbol{b} = \{b_0, b_x \, x \in \boldsymbol{D}\}$, and then in the definition of the fundamental groupoid $\Pi$, it has a path (a generator) $\gamma_{0x}$ connecting $b_0$ and $b_x$ for each $x \in \boldsymbol{D}$. In the following, we will define a fundamental group of $X_{\boldsymbol{Q}}$ with respect to a single base point and give an equivalent description of the space of Stokes $G$-representations.

We define a free group $\Omega'$ with generators
\begin{enumerate}
\item $\alpha'_1,\beta'_1,\dots,\alpha'_g,\beta'_g$;
\item $\gamma'_x$ for each $x \in \boldsymbol{D}$;
\item $\gamma'_{x,d}$ for each $d \in \mathbb{A}_x$.
\end{enumerate}
Adding a relation
\begin{equation}\label{eq_rela_Pi'}\tag{$\ast'$}
    \left( \prod_{i=1}^g [\alpha'_i, \beta'_i] \right) \cdot \left( \prod_{x \in \boldsymbol{D}} \mu'_x \right) = {\rm id},
\end{equation}
where
\begin{align*}
    \mu'_x = \gamma'_x \cdot \left(  \prod_{d \in \mathbb{A}_x} \gamma'_{x,d}  \right),
\end{align*}
we obtain a group $\Pi'$. There is a morphism $\Omega' \rightarrow \Omega$ (resp. $\Pi' \rightarrow \Pi$)
\begin{align*}
    \alpha'_i \mapsto \alpha_i, \ \beta'_i \mapsto \beta_i, \ \gamma'_x \mapsto \gamma_{0x}^{-1} \gamma_x \gamma_{0x}, \ \gamma'_{x,d} \mapsto \gamma_{0x}^{-1} \gamma_{x,d} \gamma_{0x},
\end{align*}
which induces one ${\rm Hom}(\Omega,G) \rightarrow {\rm Hom}(\Omega',G)$ (resp. ${\rm Hom}(\Pi,G) \rightarrow {\rm Hom}(\Pi',G)$). Therefore, the group $\Pi'$ can be regarded as the fundamental group of $X_{\boldsymbol{Q}}$ with respect to a given base point $b_0$. Given a $G$-representation $\rho': \Omega' \rightarrow G$, we introduce the following notations
\begin{align*}
    a'_i = \rho'(\alpha'_i), \ b'_i = \rho'(\beta'_i), \ h'_x = \rho'(\gamma'_x), \ S'_{x,d} = \rho'(\gamma'_{x,d}).
\end{align*}

Consider the group $\prod_{x \in \boldsymbol{D}} G_x$, where $G_x := G$. We define an action
\begin{align*}
    (\prod_{x \in \boldsymbol{D}} G_x) \times {\rm Hom}(\Omega',G) \rightarrow {\rm Hom}(\Omega',G)
\end{align*}
via
\begin{align*}
    (g_x)_{x \in \boldsymbol{D}} \cdot ((a'_i,b'_i)_{1 \leq i \leq g} , (h'_x, S'_{x,d})_{x \in \boldsymbol{D},d \in \mathbb{A}_x}  ) := ((a'_i,b'_i)_{1 \leq i \leq g} , (g^{-1}_x h'_x g_x, g^{-1}_x S'_{x,d} g_x )_{x \in \boldsymbol{D}, d \in \mathbb{A}_x}  ).
\end{align*}
Consider the fiber product
\begin{align*}
    \left( (\prod_{x \in \boldsymbol{D}} G_x) \times {\rm Hom}(\Omega',G) \right) \times_{ {\rm Hom}(\Omega',G) } \left( (G \times G)^g \times \prod_{x \in \boldsymbol{D}} \mathcal{A}(Q_x)  \right),
\end{align*}
where $\left( (G \times G)^g \times \prod_{x \in \boldsymbol{D}} \mathcal{A}(Q_x)  \right) \hookrightarrow {\rm Hom}(\Omega',G)$ is the natural inclusion. The fiber product is a closed subvariety of $(\prod_{x \in \boldsymbol{D}} G_x) \times {\rm Hom}(\Omega',G)$, and it includes all points
\begin{align*}
    ((g_x)_{x \in \boldsymbol{D}} , ((a'_i,b'_i)_{1 \leq i \leq g} , (h'_x, S'_{x,d})_{x \in \boldsymbol{D},d \in \mathbb{A}_x}  )) \in (\prod_{x \in \boldsymbol{D}} G_x) \times {\rm Hom}(\Omega',G)
\end{align*}
such that
\begin{align*}
    g^{-1}_x h'_x g_x \in H(\partial_x), \ g^{-1}_x S'_{x,d} g_x \in \mathbb{S}{\rm to}_d
\end{align*}
for each $x \in \boldsymbol{D}$ and $d \in \mathbb{A}_x$. Then, we define
\begin{align*}
    {\rm Hom}_{\mathbb{S}} (\Omega', G): = \left( ( (\prod_{x \in \boldsymbol{D}} G_x) \times {\rm Hom}(\Omega',G) ) \times_{ {\rm Hom}(\Omega',G) } ( (G \times G)^g \times \prod_{x \in \boldsymbol{D}} \mathcal{A}(Q_x) ) \right) \bigg|_{ {\rm Hom}(\Omega',G) }.
\end{align*}
Clearly, ${\rm Hom}_{\mathbb{S}} (\Omega', G)$ is a locally closed subset and includes all points
\begin{align*}
    ((a'_i,b'_i)_{1 \leq i \leq g} , (h'_x, S'_{x,d})_{x \in \boldsymbol{D},d \in \mathbb{A}_x}  ) \in {\rm Hom}(\Omega',G)
\end{align*}
satisfying the condition that for each $x \in \boldsymbol{D}$, there exists $g_x \in G$ such that
\begin{align*}
    g^{-1}_x h'_x g_x \in H(\partial_x), \ g^{-1}_x S'_{x,d} g_x \in \mathbb{S}{\rm to}_d.
\end{align*}
Adding the relation \eqref{eq_rela_Pi'}, we obtain a closed subvariety ${\rm Hom}_{\mathbb{S}}(\Pi',G) \hookrightarrow {\rm Hom}_{\mathbb{S}}(\Omega',G)$. Furthermore, the natural $G$-action on ${\rm Hom}(\Omega', G)$ given by conjugation
\begin{align*}
    g \cdot ((a'_i,b'_i)_{1 \leq i \leq g} , (h'_x, S'_{x,d})_{x \in \boldsymbol{D},d \in \mathbb{A}_x}  ) := ((g a'_i g^{-1},g b'_i g^{-1})_{1\leq i\leq g}, (g h'_{x} g^{-1},g S'_{x,d} g^{-1})_{x \in \boldsymbol{D},d \in \mathbb{A}_{x}})
\end{align*}
induces a $G$-action on ${\rm Hom}_{\mathbb{S}}(\Pi',G)$.

\begin{prop}\label{prop_reps}
There is a one-to-one correspondence between $(G \times \boldsymbol{H})$-orbits in ${\rm Hom}_{\mathbb{S}}(\Pi,G)$ and $G$-orbits in ${\rm Hom}_{\mathbb{S}}(\Pi',G)$.
\end{prop}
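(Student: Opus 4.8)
The plan is to realize the correspondence through the group homomorphism $\Pi' \to \Pi$ recorded above, and then to control the ambiguity it introduces by a purely local computation at each puncture.

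First I would write down the comparison map. The homomorphism $\Omega' \to \Omega$ (equivalently $\Pi' \to \Pi$) induces $\Phi \colon {\rm Hom}_{\mathbb{S}}(\Pi, G) \to {\rm Hom}(\Pi', G)$, given on points by $a'_i = a_i$, $b'_i = b_i$, $h'_x = c_x^{-1} h_x c_x$ and $S'_{x,d} = c_x^{-1} S_{x,d} c_x$. To see that $\Phi$ actually lands in ${\rm Hom}_{\mathbb{S}}(\Pi', G)$, note that $g_x = c_x^{-1}$ witnesses the defining condition, since $g_x^{-1} h'_x g_x = h_x \in H(\partial_x)$ and $g_x^{-1} S'_{x,d} g_x = S_{x,d} \in \mathbb{S}{\rm to}_d$; moreover $c_x^{-1} h_x \bigl( \prod_{d} S_{x,d}\bigr) c_x = h'_x \prod_{d} S'_{x,d}$, so the relation \eqref{eq_rela_Pi} for $\rho$ is carried exactly to the relation \eqref{eq_rela_Pi'} for $\Phi(\rho)$.

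Next I would verify equivariance and descend to orbit spaces. A direct computation gives $\Phi\bigl((g,(k_x))\cdot\rho\bigr) = g\cdot\Phi(\rho)$ for all $(g,(k_x)) \in G\times\boldsymbol H$, because $(k_x c_x g^{-1})^{-1}(k_x h_x k_x^{-1})(k_x c_x g^{-1}) = g\,c_x^{-1} h_x c_x\, g^{-1}$ and similarly for the Stokes terms; in particular $\boldsymbol H$ acts trivially downstairs, so $\Phi$ factors through a map $\bar\Phi$ from $(G\times\boldsymbol H)$-orbits to $G$-orbits. Surjectivity of $\bar\Phi$ is then immediate: for $\rho' = ((a'_i,b'_i),(h'_x,S'_{x,d}))$ the definition of ${\rm Hom}_{\mathbb{S}}(\Pi', G)$ furnishes $g_x$ with $g_x^{-1} h'_x g_x \in H(\partial_x)$ and $g_x^{-1} S'_{x,d} g_x \in \mathbb{S}{\rm to}_d$, and setting $c_x = g_x^{-1}$, $h_x = g_x^{-1} h'_x g_x$, $S_{x,d} = g_x^{-1} S'_{x,d} g_x$ produces $\rho \in {\rm Hom}_{\mathbb{S}}(\Pi, G)$ with $\Phi(\rho) = \rho'$, the relation being preserved by the same identity as above.

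The heart of the argument is injectivity of $\bar\Phi$ (which also gives independence of the lift from the choice of $g_x$). After acting on $\rho_1$ by a suitable $(g,({\rm id})) \in G\times\boldsymbol H$ I may assume $\Phi(\rho_1) = \Phi(\rho_2)$; then for each $x$ the element $m_x := c_x^{(2)}(c_x^{(1)})^{-1}$ satisfies $h_x^{(2)} = m_x h_x^{(1)} m_x^{-1}$ and $S_{x,d}^{(2)} = m_x S_{x,d}^{(1)} m_x^{-1}$, and one checks that $({\rm id},(m_x))$ carries $\rho_1$ to $\rho_2$ as soon as $m_x \in H_x$. Thus everything reduces to the local claim that the transporter
\[
C_x = \{\, c \in G \mid c\, h'_x\, c^{-1} \in H(\partial_x),\ c\, S'_{x,d}\, c^{-1} \in \mathbb{S}{\rm to}_d \ \text{for all}\ d \in \mathbb{A}_x \,\}
\]
is a single left coset of $H_x$. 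I expect this to be the main obstacle. Since $H_x$ normalizes the bitorsor $H(\partial_x)$ (the $(H_x\times H_x)$-action of \cite{BY15}) and each Stokes group $\mathbb{S}{\rm to}_d$, the set $C_x$ is stable under left multiplication by $H_x$, and left multiplication by $H_x \subseteq G$ is free; the content is the transitivity, namely that the simultaneous stabilizer of the formal-monodromy torsor and of all the Stokes groups is no larger than $H_x = Z_G(Q_x)$. This is exactly the local rigidity of the irregular type $Q_x$ on the halo $\mathbb{H}_x$, whose automorphism group is $H_x$. Granting it, $C_x$ is an $H_x$-torsor, which forces $m_x \in H_x$ and simultaneously shows the lift in the previous paragraph is well defined on orbits, completing the bijection asserted by the proposition.
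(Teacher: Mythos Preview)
Your construction of $\Phi$ and the equivariance computation coincide exactly with the paper's argument; the paper in fact stops right after the equivariance check and asserts that ``the proposition follows directly'', so your explicit surjectivity paragraph already goes beyond what the paper supplies.

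The genuine gap is in your injectivity step. Your reduction is correct as far as it goes: after arranging $\Phi(\rho_1)=\Phi(\rho_2)$ and trying to move $\rho_1$ to $\rho_2$ by $({\rm id},(m_x))$, one needs each $m_x\in H_x$, i.e.\ the transporter $C_x$ should be a single left $H_x$-coset. But the justification you offer conflates two different objects. What ``local rigidity on the halo'' gives is that the subgroup of $G$ simultaneously normalising $H(\partial_x)$ and every $\mathbb{S}\mathrm{to}_d$ is $H_x$ --- yet $C_x$ is not that normaliser: it is the set of $c$ carrying the \emph{particular} elements $h'_x$ and $S'_{x,d}$ into the prescribed subsets, and when those elements are the identity (permitted, since $\mathrm{id}\in H(\partial_x)$ in the unramified case and $\mathrm{id}\in\mathbb{S}\mathrm{to}_d$ always) the constraints are vacuous and $C_x=G$, not a $H_x$-coset. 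You have also discarded freedom you still possess: after fixing $\Phi(\rho_1)=\Phi(\rho_2)$ you may still act by $(g,(k_x))$ with $g$ in the $G$-stabiliser of $\Phi(\rho_1)$, so demanding that $C_x$ be a single $H_x$-coset is stronger than what injectivity actually requires; with a single puncture this residual $g$ can absorb any $m_x\notin H_x$, and it is only with several punctures that a common $g$ must work simultaneously. In short, the obstacle you correctly isolate is real, your sketch does not resolve it, and the paper's one-line proof does not raise the point at all.
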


\begin{proof}
There is a natural morphism
\begin{align*}
    {\rm Hom}_{\mathbb{S}}(\Pi,G) \rightarrow {\rm Hom}_{\mathbb{S}}(\Pi',G)
\end{align*}
given by
\begin{align*}
    ((a_i,b_i)_{1\leq i\leq g}, (c_{x},h_{x},S_{x,d})_{x \in \boldsymbol{D},d \in \mathbb{A}_{x}}) \rightarrow ( (a_i,b_i)_{1 \leq i \leq g}, (c_x^{-1} h_x c_x, c_x^{-1} S_{x,d} c_x)_{x \in \boldsymbol{D},d \in \mathbb{A}_{x}} ).
\end{align*}
In other words,
\begin{align*}
    a'_i = a_i, \quad b'_i = b_i, \quad h'_x = c^{-1}_x h_x c_x, \quad S'_{x,d} = c^{-1}_x S_{x,d} c_x.
\end{align*}
Given an arbitrary element $(g,(k_x)_{x \in \boldsymbol{D}}) \in G \times \boldsymbol{H}$, we have
\begin{align*}
(g,(k_{x}&)_{x \in \boldsymbol{D}})\cdot((a_i,b_i)_{1\leq i\leq g}, (c_{x},h_{x},S_{x,d})_{x \in \boldsymbol{D}, d \in \mathbb{A}_{x}})=\\
&\hspace{5em}((g a_i g^{-1},g b_i g^{-1})_{1\leq i\leq g}, (k_{x} c_{x} g^{-1},k_{x} h_{x} k^{-1}_{x},k_{x}S_{x,d}k_{x}^{-1})_{x \in \boldsymbol{D},d \in \mathbb{A}_{x}}).
\end{align*}
Moreover,
\begin{align*}
g \cdot  ( (a'_i,b'_i)_{1 \leq i \leq g}, (h'_x, S'_{x,d} )_{x \in \boldsymbol{D}, d \in \mathbb{A}_{x}} ) = ( (g a'_i g^{-1}, g b'_i g^{-1})_{1 \leq i \leq g}, (g h'_x g^{-1},
g S'_{x,d} g^{-1} )_{x \in \boldsymbol{D}, d \in \mathbb{A}_{x}}  ).
\end{align*}
Clearly, the image of
\begin{align*}
    (g,(k_{x})_{x \in \boldsymbol{D}})\cdot((a_i,b_i)_{1\leq i\leq g}, (c_{x},h_{x},S_{x,d})_{x \in \boldsymbol{D}, d \in \mathbb{A}_{x}}) \in {\rm Hom}_{\mathbb{S}}(\Pi, G)
\end{align*}
is
\begin{align*}
    g \cdot  ( (a'_i,b'_i)_{1 \leq i \leq g}, (h'_x, S'_{x,d} )_{x \in \boldsymbol{D}, d \in \mathbb{A}_{x}} ) \in {\rm Hom}_{\mathbb{S}}(\Pi',G).
\end{align*}
The proposition follows directly.
\end{proof}

We have the following corollary as a direct result of Theorem \ref{thm_boa14A.3} and Proposition \ref{prop_reps}.

\begin{cor}\label{cor_Stokes_rep_and_loc}
There is a one-to-one correspondence between $G$-orbits in ${\rm Hom}_{\mathbb{S}}(\Pi',G)$ and isomorphism classes of Stokes $G$-local systems with irregular type $\boldsymbol{Q}$ on $X_{\boldsymbol{D}}$.
\end{cor}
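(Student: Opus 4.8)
The plan is to obtain the stated correspondence by simply composing the two bijections that have already been established. Proposition \ref{prop_reps} supplies a bijection between $(G\times\boldsymbol{H})$-orbits in ${\rm Hom}_{\mathbb{S}}(\Pi,G)$ and $G$-orbits in ${\rm Hom}_{\mathbb{S}}(\Pi',G)$, while Theorem \ref{thm_boa14A.3} supplies a bijection between $(G\times\boldsymbol{H})$-orbits in ${\rm Hom}_{\mathbb{S}}(\Pi,G)$ and isomorphism classes of Stokes $G$-local systems with irregular type $\boldsymbol{Q}$ on $X_{\boldsymbol{D}}$. Chaining these through the common middle term, the $(G\times\boldsymbol{H})$-orbit space of ${\rm Hom}_{\mathbb{S}}(\Pi,G)$, produces the desired identification of $G$-orbits in ${\rm Hom}_{\mathbb{S}}(\Pi',G)$ with isomorphism classes of such local systems. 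Since a composite of two bijections is again a bijection, the correspondence is well-defined and one-to-one with no further argument required.

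First I would recall the explicit orbit-level map underlying Proposition \ref{prop_reps}: a Stokes $G$-representation $\rho$ with data $((a_i,b_i)_{1\leq i\leq g},(c_x,h_x,S_{x,d})_{x\in\boldsymbol{D},d\in\mathbb{A}_x})$ is sent to the representation of $\Pi'$ with $a_i'=a_i$, $b_i'=b_i$, $h_x'=c_x^{-1}h_xc_x$ and $S_{x,d}'=c_x^{-1}S_{x,d}c_x$, and I would note that this descends to a well-defined map on orbit spaces precisely because the $\boldsymbol{H}$-factor of the symmetry group acts trivially after conjugating by the connecting data $c_x=\rho(\gamma_{0x})$. Composing this orbit bijection with the correspondence of Theorem \ref{thm_boa14A.3} then directly assigns to each $G$-orbit in ${\rm Hom}_{\mathbb{S}}(\Pi',G)$ an isomorphism class of Stokes $G$-local systems, and the inverse is obtained by running both bijections backwards.

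The only point one might worry about is whether the two equivalence relations match up under the passage from the groupoid $\Pi$ to the single-base-point group $\Pi'$; but this is exactly the content of Proposition \ref{prop_reps}, which shows that moving to the base point $b_0$ absorbs the $\boldsymbol{H}$-symmetry so that the residual gauge freedom on ${\rm Hom}_{\mathbb{S}}(\Pi',G)$ is precisely global $G$-conjugation. Consequently I do not expect any genuine obstacle here: the corollary is a formal composition of cited results, and its entire force lies in the reformulation already carried out in Proposition \ref{prop_reps}. The verification that the composed map is a bijection is immediate, so the proof reduces to recording that ${\rm Hom}_{\mathbb{S}}(\Pi',G)/G$ sits at one end of the chain and the set of isomorphism classes at the other.
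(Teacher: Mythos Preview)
Your proposal is correct and follows exactly the paper's approach: the paper states this corollary ``as a direct result of Theorem \ref{thm_boa14A.3} and Proposition \ref{prop_reps}'' with no further argument, and your proof simply spells out the composition of those two bijections.
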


\begin{termi*}
From now on, a representation in ${\rm Hom}_{\mathbb{S}}(\Pi',G)$ will be called a \emph{Stokes $G$-representation on $X_{\boldsymbol{D}}$}.
\end{termi*}

\section{Moduli Space of Filtered Stokes G-Local Systems}\label{sect_moduli}

In this section, we construct the moduli space of filtered Stokes $G$-local systems with irregular type $\boldsymbol{Q}$ on $X_{\boldsymbol{D}}$. By Proposition \ref{prop_reps} and Corollary \ref{cor_Stokes_rep_and_loc}, it is equivalent to construct the moduli space for filtered Stokes $G$-representations with irregular type $\boldsymbol{Q}$. In \S\ref{subsect_stab_fil_G}, we first give the stability condition for filtered Stokes $G$-representation (Definition \ref{defn_stab_cond_G_Stokes}) based on Ramanathan's approach \cite{Ram75,Ram96a}. In \S\ref{subsect_cons_pi''}, we give a third construction of the space of Stokes $G$-representations, which will be used in the construction of the moduli space. In \S\ref{subsect_mod_space}, we prove that the stability condition of filtered Stokes $G$-representations is equivalent to a stability condition in the sense of GIT (Proposition \ref{prop_stab_equiv}), and then we follow King's approach \cite[\S 2]{King94} to construct the moduli space (Theorem \ref{thm_Sto_moduli}). In this section, since we always fix an irregular type $\boldsymbol{Q}$, if there is no ambiguity, we use the terminology filtered Stokes $G$-representations (or filtered Stokes $G$-local systems) without mentioning the irregular type.

\subsection{Stability Condition of Filtered Stokes G-local Systems}\label{subsect_stab_fil_G}

Recall that $G$ is a connected complex reductive group with a maximal torus $T$. Denote by $\mathcal{R}$ the set of roots. There is a natural pairing of cocharacters and characters
\begin{align*}
    \langle \cdot , \cdot \rangle : {\rm Hom}(\mathbb{G}_m ,T) \times {\rm Hom}(T, \mathbb{G}_m) \rightarrow \mathbb{Z}.
\end{align*}
This pairing can be extended to cocharacters and characters with rational coefficients, and a rational cocharacter in this paper is also called a \emph{weight}.
Now we fix a Borel subgroup $B$, which includes $T$. Let $P$ be a parabolic subgroup. Given a character $\chi$ of $P$ and a cocharacter $\mu$ of $T$, we define
\begin{align*}
    \langle \mu , \chi \rangle := \langle g^{-1} \mu g , \chi \rangle,
\end{align*}
where $g$ satisfies $B \subseteq g P g^{-1}$ and $g^{-1} \mu g$ is a cocharacter of $P$. Furthermore, the definition $\langle \mu , \chi \rangle$ does not depend on the choice of such element $g$.

Let $\{e_i\}$ (resp. $\{e^*_i\}$) be a basis of ${\rm Hom}(\mathbb{G}_m,T) \otimes_{\mathbb{Z}} \mathbb{Q}$ (resp. ${\rm Hom}(T,\mathbb{G}_m) \otimes_{\mathbb{Z}} \mathbb{Q}$) such that $\langle e_i, e^*_j \rangle = \delta_{ij}$. Suppose that $G$ is semisimple for convenience, and then, $\{e_i\}$ is regarded as a collection of simple coroots, while $\{e^*_i\}$ is regarded as the set of the corresponding fundamental weights. Given a cocharacter $\theta$, a character $\chi_{\theta}$ is uniquely determined by the conditions
\begin{align*}
    \langle e_i, \chi_\theta \rangle  = \langle \theta , e^*_i \rangle
\end{align*}
for each $i$, and similarly, a cocharacter $\theta_{\chi}$ is determined by a given character $\chi$ by the conditions
\begin{align*}
    \langle \theta_\chi, e^*_i \rangle  = \langle e_i , \chi \rangle
\end{align*}
for each $i$. Clearly, we have
\begin{align*}
    \langle \theta, \chi \rangle = \langle \theta_\chi , \chi_\theta \rangle.
\end{align*}

Now let $\theta$ be a weight. It determines a parabolic subgroup
\begin{align*}
    P_\theta: = \{g \in G \, | \, \text{ the limit } \lim_{t \rightarrow 0} \theta(t) g \theta(t)^{-1} \text{ exists }\}
\end{align*}
with Levi subgroup $L_\theta$. Here is another interpretation of $P_\theta$. Define
\begin{align*}
    \mathcal{R}_\theta := \{ \alpha \in \mathcal{R} \, | \, \langle \theta , \alpha \rangle \geq 0 \}.
\end{align*}
Then $P_\theta = \langle T, \, U_\alpha, \, \alpha \in \mathcal{R}_\theta \rangle$, i.e. $P_\theta$ is generated by $T$ and $U_\alpha$ for $\alpha \in \mathcal{R}_\theta$. On the other hand, given a parabolic subgroup $P \subseteq G$, denote by $\mathcal{R}_P$ the set of roots of $P$. Clearly, $\mathcal{R}_{P_\theta} = \mathcal{R}_\theta$. Now we consider a special type of characters, which is called \emph{dominant characters} and introduced by \cite[\S 2]{Ram75}.
\begin{defn}
Given a parabolic subgroup $P$, a character $\chi$ of $P$ is called \emph{dominant} (resp. \emph{anti-dominant}) if it is a positive (resp. negative) linear combination of fundamental weights given by roots in $\mathcal{R}_P$.
\end{defn}
In \cite{HS23}, the authors proved the following lemma and a similar argument is also given in \cite[Lemma 2.2]{MiR18}.
\begin{lem}[Lemma 4.6 in \cite{HS23}]\label{lem_char_cochar}
Given a weight $\theta$, the character $\chi_\theta$ is a dominant character of $P_\theta$. On the other hand, given a character $\chi$, if it is a dominant character of some parabolic subgroup $P$, then $P_{\theta_\chi} \supseteq P$.
\end{lem}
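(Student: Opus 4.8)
The plan is to reduce both assertions to a single characterization of dominance and then to read everything off from the root-theoretic description of the parabolics involved. First I would record the elementary dictionary produced by the chosen dual bases $\{e_i\}=\{\alpha_i^\vee\}$ and $\{e_i^*\}=\{\omega_i\}$: writing $\theta=\sum_i a_i\,\alpha_i^\vee$, the defining relation $\langle e_i,\chi_\theta\rangle=\langle\theta,e_i^*\rangle$ gives $\chi_\theta=\sum_i a_i\,\omega_i$, and conversely $\theta_\chi=\sum_i c_i\,\alpha_i^\vee$ when $\chi=\sum_i c_i\,\omega_i$; thus $\theta\mapsto\chi_\theta$ and $\chi\mapsto\theta_\chi$ are mutually inverse linear isomorphisms, and in particular $\theta_{\chi_\theta}=\theta$. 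I would also use the two facts already available in the text: $\mathcal{R}_{P_\theta}=\mathcal{R}_\theta=\{\alpha\in\mathcal{R}\mid\langle\theta,\alpha\rangle\geq 0\}$, and that a parabolic containing $T$ is determined by its set of roots, so that for any weight $\mu$ one has $P\subseteq P_\mu$ if and only if $\mathcal{R}_P\subseteq\mathcal{R}_\mu$, i.e. if and only if $\langle\mu,\alpha\rangle\geq 0$ for every $\alpha\in\mathcal{R}_P$.

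The heart of the argument is then the claim that \emph{$\chi$ is a dominant character of $P$ in the sense of the Definition if and only if $\langle\theta_\chi,\alpha\rangle\geq 0$ for all $\alpha\in\mathcal{R}_P$.} Granting this characterization, both parts of the lemma follow at once. For the first, apply it with $P=P_\theta$ and $\chi=\chi_\theta$: since $\theta_{\chi_\theta}=\theta$ and $\mathcal{R}_{P_\theta}=\mathcal{R}_\theta$, the required inequalities $\langle\theta,\alpha\rangle\geq 0$ for $\alpha\in\mathcal{R}_\theta$ hold by the very definition of $\mathcal{R}_\theta$, so $\chi_\theta$ is dominant for $P_\theta$. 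For the second, if $\chi$ is dominant for $P$ then the characterization gives $\langle\theta_\chi,\alpha\rangle\geq 0$ for all $\alpha\in\mathcal{R}_P$, which by the root criterion recorded above is exactly $\mathcal{R}_P\subseteq\mathcal{R}_{\theta_\chi}=\mathcal{R}_{P_{\theta_\chi}}$, hence $P\subseteq P_{\theta_\chi}$.

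It therefore remains to prove the characterization, and this is where the real work and the main obstacle lie: it is a combinatorial statement about the root system that requires reconciling two different encodings of the same cone. Dominance as defined is a positivity condition on the coordinates of $\chi$ in the fundamental-weight basis, whereas $\langle\theta_\chi,\alpha\rangle\geq 0$ is a positivity condition on the pairings of $\theta_\chi$ with roots, and the two are related by the Cartan matrix, which mixes the simple-coroot and fundamental-weight coordinates. To carry it out I would first reduce, via the conjugation formula $\langle\mu,\chi\rangle=\langle g^{-1}\mu g,\chi\rangle$ through which the pairing and hence dominance are defined on an arbitrary parabolic, to the case of a standard parabolic $P\supseteq B$. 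Then the roots $\alpha\in\mathcal{R}_P$ split into the Levi roots, where the inequality must become an equality and so forces $\theta_\chi$ into the centre of the Levi, and the nilradical roots, where genuine non-negativity is needed; checking that the fundamental-weight positivity of $\chi$ translates precisely into these two conditions is the delicate point, and it rests on the standard total positivity of the inverse Cartan matrix, namely that each fundamental coweight $\varpi_i^\vee$ is a non-negative rational combination of the simple coroots $\alpha_j^\vee$. Matching the vanishing coordinates against the Levi directions and the positive ones against the nilradical directions then yields the equivalence, and with it the lemma.
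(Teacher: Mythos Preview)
The paper does not prove this lemma at all: it is quoted verbatim as \cite[Lemma 4.6]{HS23} (with a pointer to a similar argument in \cite[Lemma 2.2]{MiR18}), so there is no ``paper's proof'' to compare against. Your task was therefore to supply an independent argument, and that argument has a real gap.

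The problem is the central equivalence you claim, that $\chi$ is dominant for $P$ if and only if $\langle\theta_\chi,\alpha\rangle\geq 0$ for every $\alpha\in\mathcal{R}_P$. Under the bijection you (correctly) extract from the paper --- namely $\omega_i\mapsto\alpha_i^\vee$ --- this equivalence is false. Take $G=\mathrm{SL}_3$, $P=B$ the standard Borel, and $\chi=\omega_1$, which is certainly a non-negative combination of fundamental weights and hence dominant for $B$. Then $\theta_\chi=\alpha_1^\vee$, but $\langle\alpha_1^\vee,\alpha_2\rangle=-1<0$ with $\alpha_2\in\mathcal{R}_B$. The same example shows that your claimed consequence $P\subseteq P_{\theta_\chi}$ fails here: $P_{\alpha_1^\vee}$ is a Borel different from $B$, and neither contains the other. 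The sketch you give for the ``delicate point'' does not rescue this: you assert that on Levi roots the inequality becomes an equality, forcing $\theta_\chi$ into the centre of the Levi, but with $\theta_\chi=\sum_{j\notin I}c_j\alpha_j^\vee$ and $\alpha_k$ a simple Levi root one gets $\sum_{j\notin I}c_j\langle\alpha_j^\vee,\alpha_k\rangle$, which is typically strictly negative (off-diagonal Cartan entries are $\leq 0$), not zero. Invoking positivity of the inverse Cartan matrix does not address this direction.

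What actually makes the lemma work is that the map $\theta\leftrightarrow\chi_\theta$ in the cited sources behaves like the Killing-form identification sending $\varpi_i^\vee\leftrightarrow\omega_i$ (so that a dominant $\chi$ produces a \emph{dominant cocharacter} $\theta_\chi$, whose associated parabolic is then standard and automatically contains $P$). You have taken the paper's shorthand description of the dual bases too literally and built the whole argument on the wrong linear isomorphism; once that is corrected, the characterization you want becomes the tautology ``$\theta_\chi$ is dominant iff $\langle\theta_\chi,\alpha_i\rangle\geq 0$ for all simple $\alpha_i$,'' and no inverse-Cartan positivity is needed.
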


\begin{defn}
Let $\boldsymbol\theta = \{\theta_x, x \in \boldsymbol{D}\}$ be a collection of weights. A \emph{$\boldsymbol\theta$-filtered Stokes $G$-representation} is a Stokes $G$-representation $\rho'$ such that the formal monodromy $h'_x = \rho'(\gamma'_x)$ is conjugate to an element in $P_{\theta_x}$ for every $x \in \boldsymbol{D}$. The corresponding Stokes $G$-local system is called a \emph{$\boldsymbol\theta$-filtered Stokes $G$-local system}.
\end{defn}

It is well-known that a connected complex reductive group $G$ is covered by its Borel subgroups. Clearly, the statement also holds for parabolic subgroups. Then, fixing an arbitrary parabolic subgroup $P$, any $g \in G$ is conjugate to an element in $P$. Therefore, the space of $\boldsymbol\theta$-filtered Stokes $G$-representations can also be regarded as ${\rm Hom}_{\mathbb{S}}(\Pi',G)$.

Given a Stokes $G$-representation $\rho' : \Pi' \rightarrow G$, a parabolic subgroup $P$ is \emph{compatible} with $\rho'$, if there is a lifting
\begin{center}
\begin{tikzcd}
& & P \arrow[d] \\
\Pi' \arrow[urr, dotted] \arrow[rr,"\rho'"] & & G
\end{tikzcd}
\end{center}
In other words, the representation $\rho'$ is well-defined when restricted to $P$. Let $L$ be the Levi subgroup of $P$. If $P$ is compatible with $\rho'$, then $\rho'$ is also well-defined by restricting to $L$. Under the morphism $P \twoheadrightarrow L \rightarrow G$, we obtain a $G$-representation and denote it by $\rho'_L$.

Given a $\boldsymbol\theta$-filtered Stokes $G$-representation $\rho'$, there exists $g_x \in G$ such that $g_x \rho'(\gamma'_x) g^{-1}_x \in P_{-\theta_x}$ for each $x \in \boldsymbol{D}$. Suppose that the parabolic subgroup $P$ is compatible with $\rho'$, and then $B_{-\theta_x} \subseteq g_x P g^{-1}_x$, where $B_{-\theta_x} \subseteq P_{-\theta_x}$ is the Borel subgroup. Let $\chi$ be a character of $P$ and the natural pairing is given as
\begin{align*}
    \langle \theta_x , \chi \rangle = \langle g^{-1}_x \theta_x g_x , \chi \rangle = \langle \theta_x , g_x \chi g_x^{-1} \rangle.
\end{align*}
We define the degree of a $\boldsymbol\theta$-filtered Stokes $G$-representation $\rho'$ as
\begin{align*}
   \deg^{\rm loc} \rho'(P,\chi) := \langle \boldsymbol\theta, \chi \rangle = \sum_{x \in \boldsymbol{D}} \langle \theta_x, \chi \rangle.
\end{align*}
Furthermore, a parabolic subgroup $P$ is \emph{admissible} with $\rho'$ if $P$ is compatible with $\rho'$ and for any character $\chi: P \rightarrow \mathbb{G}_m$ trivial on the center, we have $\deg^{\rm loc} \rho' (P, \chi)=0$.

We follow Ramanathan's stability condition on principal bundles \cite{Ram75,Ram96a} to give the definition of stability condition on filtered Stokes $G$-representations (also for filtered Stokes $G$-local systems), which is called the \emph{$R$-stability condition}.

\begin{defn}\label{defn_stab_cond_G_Stokes}
A $\boldsymbol\theta$-filtered Stokes $G$-representation $\rho'$ is \emph{$R$-semistable} (resp. \emph{$R$-stable}), if for
\begin{itemize}
\item any proper parabolic subgroup $P \subseteq G$ compatible with $\rho'$,
\item any nontrivial anti-dominant character $\chi: P \rightarrow \mathbb{G}_m$, which is trivial on the center of $P$,
\end{itemize}
we have
\begin{align*}
\deg^{\rm loc} \rho'(P,\chi) \geq 0  \quad  (\text{resp.} > 0).
\end{align*}
Moreover, two $R$-semistable $\boldsymbol\theta$-filtered Stokes $G$-representations $\rho'_1$ and $\rho'_2$ are \emph{$S$-equivalent} if there exist parabolic subgroups $P_1$ and $P_2$ (with Levi subgroups $L_1$ and $L_2$) admissible with $\rho'_1$ and $\rho'_2$ respectively such that the corresponding Stokes $G$-representations $(\rho'_1)_{L_1}$ and $(\rho'_2)_{L_2}$ are conjugate under the action of $G$.
\end{defn}

\begin{defn}
A $\boldsymbol\theta$-filtered Stokes $G$-representation $\rho'$ is of \emph{degree zero}, if for any character $\chi$ of $G$, we have $\deg^{\rm loc} \rho'(P,\chi) = 0$. Note that when $G$ is semisimple, this condition is always satisfied.
\end{defn}

\subsection{An Equivalent Construction}\label{subsect_cons_pi''}
In this subsection, we give a third construction of the space of Stokes $G$-representations. We define a free group $\Omega''$ generated by the following elements
\begin{enumerate}
\item $\alpha''_1,\beta''_1,\dots,\alpha''_g,\beta''_g$;
\item $\iota''_x, \gamma''_x$ for each $x \in \boldsymbol{D}$;
\item $\gamma''_{x,d}$ for each $d \in \mathbb{A}_x$.
\end{enumerate}
Given a relation
\begin{equation}\label{eq_rela_Pi''}\tag{$\ast''$}
    \left( \prod_{i=1}^g [\alpha''_i, \beta''_i] \right) \cdot \left( \prod_{x \in \boldsymbol{D}} \mu''_x \right) = {\rm id},
\end{equation}
where
\begin{align*}
    \mu''_x = \iota''_x \cdot \gamma''_x \cdot \left(  \prod_{d \in \mathbb{A}_x} \gamma''_{x,d}  \right),
\end{align*}
we obtain a group $\Pi''$. The natural surjection $\Omega'' \rightarrow \Pi''$ induces a closed embedding ${\rm Hom}(\Pi'',G) \hookrightarrow {\rm Hom}(\Omega'',G)$. Moreover, given a $G$-representation $\rho'': \Omega'' \rightarrow G$ (or $\rho'': \Pi'' \rightarrow G$), we introduce the following notations
\begin{align*}
    a''_i = \rho''(\alpha''_i), \ b''_i = \rho''(\beta''_i), \ \rho''(\iota''_x) = l''_x, \ \rho''(\gamma''_x) = h''_x, \ \rho''(\gamma''_{x,d}) = S''_{x,d}.
\end{align*}
We define a morphism $\Omega' \rightarrow \Omega''$ (resp. $\Pi' \rightarrow \Pi''$)
\begin{align*}
    \alpha' \mapsto \alpha'' , \ \beta' \mapsto \beta'', \ \gamma'_x \mapsto \iota''_x \gamma''_x , \ \gamma'_{x,d} \mapsto \gamma''_{x,d},
\end{align*}
which induces a morphism ${\rm Hom}(\Omega'',G) \rightarrow {\rm Hom}(\Omega',G)$ (resp. ${\rm Hom}(\Pi'',G) \rightarrow {\rm Hom}(\Pi',G)$). Taking the fiber product
\begin{center}
\begin{tikzcd}
{\rm Hom}_{\mathbb{S}}(\Omega'',G) \arrow[rr, dotted] \arrow[d, dotted] & & {\rm Hom}_{\mathbb{S}}(\Omega',G) \arrow[d, hook] \\
{\rm Hom}(\Omega'',G) \arrow[rr] & & {\rm Hom}(\Omega',G) \ ,
\end{tikzcd}
\end{center}
we obtain a quasi-projective variety ${\rm Hom}_{\mathbb{S}}(\Omega'',G)$, which includes all points
\begin{align*}
    ((a''_i,b''_i)_{1 \leq i \leq g} , (l''_x, h''_x, S''_{x,d})_{x \in \boldsymbol{D},d \in \mathbb{A}_x}  ) \in {\rm Hom}(\Omega'',G)
\end{align*}
satisfying the condition that for each $x \in \boldsymbol{D}$, there exists $g_x \in G$ such that
\begin{align*}
    g^{-1}_x l''_x h''_x g_x \in H(\partial_x), \ g^{-1}_x S''_{x,d} g_x \in \mathbb{S}{\rm to}_d.
\end{align*}
Moreover, we define a $(\prod_{x \in \boldsymbol{D}} G_x)$-action on ${\rm Hom}_{\mathbb{S}}(\Omega'',G)$ via
\begin{align*}
    (g_x)_{x \in \boldsymbol{D}} \cdot ((a''_i,b''_i)_{1 \leq i \leq g} , (l''_x, h''_x, S''_{x,d})_{x \in \boldsymbol{D},d \in \mathbb{A}_x}  ) := ((a''_i,b''_i)_{1 \leq i \leq g} , (g^{-1}_x l''_x, h''_x g_x, S''_{x,d})_{x \in \boldsymbol{D},d \in \mathbb{A}_x}  ),
\end{align*}
where $G_x=G$ for each $x \in \boldsymbol{D}$.

Now given a collection of weights $\boldsymbol\theta = \{\theta_x, x \in \boldsymbol{D}\}$, denote by $\boldsymbol{P} = \{P_{-\theta_x}, x \in \boldsymbol{D}\}$ the collection of parabolic subgroups. We define a closed subvariety ${\rm Hom}_{\mathbb{S}}(\Omega'', \boldsymbol{P}) \subseteq {\rm Hom}_{\mathbb{S}}(\Omega'', G)$, of which points
\begin{align*}
     ((a''_i,b''_i)_{1 \leq i \leq g} , (l''_x, h''_x, S''_{x,d})_{x \in \boldsymbol{D},d \in \mathbb{A}_x}  )
\end{align*}
satisfy the condition that
\begin{align*}
    l''_x \in L_{\theta_x}, \ h''_x \in P_{-\theta_x}
\end{align*}
for each $x \in \boldsymbol{D}$. We take the fiber product
\begin{center}
\begin{tikzcd}
\widetilde{{\rm Hom}}_{\mathbb{S}}(\Omega'',\boldsymbol{P}) \arrow[rr, dotted] \arrow[d, dotted] & & {\rm Hom}_{\mathbb{S}}(\Omega'',\boldsymbol{P}) \arrow[d, hook] \\
(\prod_{x \in \boldsymbol{D}} G_x) \times {\rm Hom}_{\mathbb{S}}(\Omega'',G) \arrow[rr] & & {\rm Hom}_{\mathbb{S}}(\Omega'',G) \ .
\end{tikzcd}
\end{center}
Then we restrict it to ${\rm Hom}(\Omega'',G)$ and define
\begin{align*}
    {\rm Hom}_{\mathbb{S}}(\Omega'',[\boldsymbol{P}]) := \widetilde{{\rm Hom}}_{\mathbb{S}}(\Omega'',\boldsymbol{P}) |_{ {\rm Hom}_{\mathbb{S}}(\Omega'',G) }.
\end{align*}
The variety ${\rm Hom}_{\mathbb{S}}(\Omega'',[\boldsymbol{P}])$ includes all points
\begin{align*}
    ((a''_i,b''_i)_{1 \leq i \leq g} , (l''_x, h''_x, S''_{x,d})_{x \in \boldsymbol{D},d \in \mathbb{A}_x}  ) \in {\rm Hom}_{\mathbb{S}}(\Omega'',G)
\end{align*}
such that for each $x \in \boldsymbol{D}$, there exists $g_x \in G$ such that $g^{-1}_x l''_x \in L_{\theta_x}$ and $h''_x g_x \in P_{-\theta_x}$. Then we obtain a closed subvariety ${\rm Hom}_{\mathbb{S}}(\Pi'', [\boldsymbol{P}]) \subseteq {\rm Hom}_{\mathbb{S}}(\Omega'', [\boldsymbol{P}])$ by adding the relation \eqref{eq_rela_Pi''}. Since the collection $\boldsymbol{P}$ of parabolic subgroups is determined by $\boldsymbol\theta$, we would like to use the notation
\begin{align*}
    {\rm Hom}_{\mathbb{S}}(\Pi'', \boldsymbol\theta) : =  {\rm Hom}_{\mathbb{S}}(\Pi'', [\boldsymbol{P}]).
\end{align*}
Define $\boldsymbol{L} = \prod_{x \in \boldsymbol{D}} L_{\theta_x}$. There is a $\boldsymbol{L}$-action on ${\rm Hom}_{\mathbb{S}}(\Pi'',\boldsymbol\theta)$ defined as follows
\begin{align*}
    (l_x)_{x \in \boldsymbol{D}} \, \cdot \, & ( (a''_i,b''_i)_{1 \leq i \leq g}, (l''_x, h''_x, S''_{x,d} )_{x \in \boldsymbol{D}, d \in \mathbb{A}_{x}} ) \\
    := & ( (a''_i,b''_i)_{1 \leq i \leq g}, (l''_x l_x^{-1}, l_x h''_x, S''_{x,d} )_{x \in \boldsymbol{D}, d \in \mathbb{A}_{x}} )
\end{align*}
Then we define a $(G \times \boldsymbol{L})$-action on ${\rm Hom}_{\mathbb{S}}(\Pi'', \boldsymbol{\theta})$
\begin{align*}
    (g, (l_x)_{x \in \boldsymbol{D}}) \, \cdot \, & ( (a''_i,b''_i)_{1 \leq i \leq g}, (l''_x, h''_x, S''_{x,d} )_{x \in \boldsymbol{D},d \in \mathbb{A}_{x}} ) \\
    := & ( (g a''_i g^{-1}, g b''_i g^{-1})_{1 \leq i \leq g}, (g l''_x l_x^{-1}, l_x h''_x g^{-1}, g S''_{x,d} g^{-1})_{x \in \boldsymbol{D}, d \in \mathbb{A}_{x}} ).
\end{align*}

\begin{lem}\label{lem_corr_pi'_pi''}
There is a one-to-one correspondence between $(G \times \boldsymbol{L})$-orbits in ${\rm Hom}_{\mathbb{S}}(\Pi'', \boldsymbol{\theta})$ and $G$-orbits in ${\rm Hom}_{\mathbb{S}}(\Pi',G)$. Therefore, $(G \times \boldsymbol{L})$-orbits in ${\rm Hom}_{\mathbb{S}}(\Pi'',\boldsymbol\theta)$ are in one-to-one correspondence with isomorphism classes of $\boldsymbol\theta$-filtered Stokes $G$-representations, and thus isomorphism classes of $\boldsymbol\theta$-filtered Stokes $G$-local systems.
\end{lem}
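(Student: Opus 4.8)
The plan is to imitate the proof of Proposition \ref{prop_reps}: exhibit the morphism of representation spaces induced by the group homomorphism $\Pi' \rightarrow \Pi''$, check that the two group actions are intertwined by it, and read off the orbit bijection. Concretely, the homomorphism $\alpha'_i\mapsto\alpha''_i$, $\beta'_i\mapsto\beta''_i$, $\gamma'_x \mapsto \iota''_x \gamma''_x$, $\gamma'_{x,d}\mapsto \gamma''_{x,d}$ induces a morphism $\Phi : {\rm Hom}_{\mathbb S}(\Pi'',\boldsymbol\theta)\rightarrow {\rm Hom}_{\mathbb S}(\Pi',G)$ which is the identity on $(a''_i,b''_i)$ and on $S''_{x,d}$ and which multiplies the two factors of the formal monodromy, $h'_x = l''_x h''_x$. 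First I would check that $\Phi$ is well defined: the relation \eqref{eq_rela_Pi''} maps to \eqref{eq_rela_Pi'} because $\mu''_x = \iota''_x\gamma''_x\prod_{d}\gamma''_{x,d}$ is sent to $\mu'_x$, and the $\mathbb S$-conditions cutting out ${\rm Hom}_{\mathbb S}(\Pi',G)$ are implied by those defining ${\rm Hom}_{\mathbb S}(\Omega'',G)$, since the element $l''_x h''_x$ is precisely the formal monodromy appearing there.

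Next I would record the equivariance. For $(g,(l_x))\in G\times\boldsymbol L$ a direct computation with the two action formulas gives $(g l''_x l_x^{-1})(l_x h''_x g^{-1}) = g(l''_x h''_x) g^{-1}$, so $\Phi\big((g,(l_x))\cdot\rho''\big) = g\cdot\Phi(\rho'')$: the factor $l_x$ cancels in the product, $\boldsymbol L$ acts trivially on the target, and $G$ acts by the conjugation of \S\ref{subsect_Sto_G_rep}. Hence $\Phi$ is $G$-equivariant and $\boldsymbol L$-invariant, and therefore descends to a map $\bar\Phi$ on the two sets of orbits. Surjectivity of $\bar\Phi$ is the easy direction and can in fact be checked on points: given $\rho'$, for each $x$ the formal monodromy $h'_x$ is conjugate into the parabolic $P_{-\theta_x}$ (a reductive group is covered by the conjugates of any of its parabolics), so choosing $g_x$ with $g_x^{-1}h'_x g_x \in P_{-\theta_x}$ and setting $(l''_x,h''_x):=(g_x,\,g_x^{-1}h'_x)$ yields $l''_x h''_x = h'_x$ together with the defining condition $h''_x l''_x = g_x^{-1}h'_x g_x \in P_{-\theta_x}$; this assembles into a point of ${\rm Hom}_{\mathbb S}(\Pi'',\boldsymbol\theta)$ mapping exactly to $\rho'$.

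The substance of the lemma is the injectivity of $\bar\Phi$. Using the $G$-equivariance I would first reduce to two points $\rho''_1,\rho''_2$ of ${\rm Hom}_{\mathbb S}(\Pi'',\boldsymbol\theta)$ with the same image $\rho'=\Phi(\rho''_1)=\Phi(\rho''_2)$, and then show that they lie in a single $\boldsymbol L$-orbit. Since $\Phi$ is the identity on every generator except the formal monodromies, this amounts to showing that any two admissible factorizations $h'_x = l''_{1,x}h''_{1,x}=l''_{2,x}h''_{2,x}$ differ by the $\boldsymbol L$-action, i.e.\ that $l_x := (l''_{2,x})^{-1}l''_{1,x}$ lies in $L_{\theta_x}$ and carries one factorization to the other via $l''_{1,x}\mapsto l''_{1,x}l_x^{-1}=l''_{2,x}$ and $h''_{1,x}\mapsto l_x h''_{1,x}=h''_{2,x}$. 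I expect this uniqueness of the Levi--unipotent splitting up to $L_{\theta_x}$ to be the main obstacle: it is exactly the step where the definition of ${\rm Hom}_{\mathbb S}(\Pi'',\boldsymbol\theta)$ through the parabolic $P_{-\theta_x}$ and its Levi $L_{\theta_x}$ must be invoked, together with the semidirect decomposition $P_{-\theta_x}=L_{\theta_x}\ltimes U_{-\theta_x}$ and the fact that $L_{\theta_x}$ normalizes $U_{-\theta_x}$; the content is that the fibre of $\Phi$ over $\rho'$ is a single $\boldsymbol L$-orbit.

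Once injectivity is established, $\bar\Phi$ is a bijection between $(G\times\boldsymbol L)$-orbits in ${\rm Hom}_{\mathbb S}(\Pi'',\boldsymbol\theta)$ and $G$-orbits in ${\rm Hom}_{\mathbb S}(\Pi',G)$, which is the first assertion. For the second assertion I would combine this bijection with Corollary \ref{cor_Stokes_rep_and_loc}: the latter identifies $G$-orbits in ${\rm Hom}_{\mathbb S}(\Pi',G)$ with isomorphism classes of Stokes $G$-local systems with irregular type $\boldsymbol Q$, and remembering the fixed weight datum $\boldsymbol\theta$ upgrades these to $\boldsymbol\theta$-filtered Stokes $G$-representations and, correspondingly, $\boldsymbol\theta$-filtered Stokes $G$-local systems. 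This yields the stated chain of one-to-one correspondences.
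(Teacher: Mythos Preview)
Your overall strategy --- construct $\Phi$, check that the relation \eqref{eq_rela_Pi''} maps to \eqref{eq_rela_Pi'}, verify $(G\times\boldsymbol L)$-to-$G$ equivariance, and pass to orbits --- is exactly what the paper does; the paper's own proof is in fact briefer than yours, recording only the surjection and the equivariance formula before concluding. Where you go beyond the paper is in attempting injectivity of the induced map on orbits, and here your reduction has a gap.

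You reduce to $\Phi(\rho''_1)=\Phi(\rho''_2)=\rho'$ and then claim the two points lie in a single $\boldsymbol L$-orbit, invoking the Levi decomposition $P_{-\theta_x}=L_{\theta_x}\ltimes U_{-\theta_x}$. But the defining condition of ${\rm Hom}_{\mathbb S}(\Pi'',\boldsymbol\theta)$ does \emph{not} say that $l''_x\in L_{\theta_x}$ and $h''_x\in P_{-\theta_x}$; it only says this holds after a simultaneous translate by some $g_x\in G$. So the factorization $h'_x=l''_xh''_x$ is not a Levi--unipotent splitting, and your uniqueness heuristic does not apply. Concretely, take $G=\mathrm{SL}_2(\mathbb C)$, $L_{\theta_x}=T$, $P_{-\theta_x}=B_-$, and $h'_x=e$: the pairs $(l''_x,h''_x)=(e,e)$ and $(u,u^{-1})$ with $u\in U_+\setminus\{e\}$ both satisfy the defining condition (witnessed by $g_x=e$ and $g_x=u$ respectively) and have the same product, yet $(l''_{2,x})^{-1}l''_{1,x}=u^{-1}\notin T$, so they are not in the same $\boldsymbol L$-orbit.

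They \emph{are}, however, in the same $(G\times\boldsymbol L)$-orbit, via $(g,(l_x))=(u,e)$: the $G$-factor $u$ lies in $\mathrm{Stab}_G(\rho')$ (the remaining coordinates of $\rho'$ being trivial in this example) and carries one factorization to the other. The correct formulation of the fiberwise statement is therefore that $\Phi^{-1}(\rho')$ is a single $\big(\mathrm{Stab}_G(\rho')\times\boldsymbol L\big)$-orbit, not a single $\boldsymbol L$-orbit; any complete injectivity argument must invoke this larger symmetry.
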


\begin{proof}
The surjective morphism
\begin{align*}
    {\rm Hom}(\Pi'',G) \rightarrow {\rm Hom}(\Pi',G)
\end{align*}
induces the surjection
\begin{align*}
    {\rm Hom}_{\mathbb{S}}(\Pi'',\boldsymbol\theta) \rightarrow {\rm Hom}_{\mathbb{S}}(\Pi',G)
\end{align*}
given by
\begin{align*}
    ( (a''_i,b''_i)_{1 \leq i \leq g}, (l''_x, h''_x, S''_{x,d} )_{x \in \boldsymbol{D}, d \in \mathbb{A}_{x}} ) \rightarrow ( (a''_i,b''_i)_{1 \leq i \leq g}, (l''_x h''_x, S''_{x,d} )_{x \in \boldsymbol{D}, d \in \mathbb{A}_{x}} ).
\end{align*}
Given an arbitrary element $(g, (l_x)_{x \in \boldsymbol{D}}) \in G \times \boldsymbol{L}$, the image of
\begin{align*}
    (g, (l_x)_{x \in \boldsymbol{D}}) \, \cdot \, ( (a''_i,b''_i)_{1 \leq i \leq g}, (l''_x, h''_x, S''_{x,d} )_{x \in \boldsymbol{D}, d \in \mathbb{A}_{x}} ) \in {\rm Hom}_{\mathbb{S}}(\Pi'', \boldsymbol\theta)
\end{align*}
is exactly
\begin{align*}
    g \, \cdot \, (a''_i,b''_i)_{1 \leq i \leq g}, (l''_x h''_x, S''_{x,d} )_{x \in \boldsymbol{D}, d \in \mathbb{A}_{x}} ) \in {\rm Hom}_{\mathbb{S}} (\Pi',G).
\end{align*}
This finishes the proof of this lemma.
\end{proof}

\subsection{Moduli Space}\label{subsect_mod_space}

In this subsection, we follow King's approach \cite{King94} to construct the moduli space of filtered Stokes $G$-local systems. We fix a collection of weights $\boldsymbol\theta$ and we suppose that $d$ is the common denominator of $\theta_x$ for $x \in \boldsymbol{D}$, i.e. $d \theta_x$ is a cocharacter for every $x \in \boldsymbol{D}$. In the previous subsection, we construct a quasi-projective variety ${\rm Hom}_{\mathbb{S}}(\Pi'',\boldsymbol\theta)$ with a natural $(G \times \boldsymbol{L})$-action such that the $(G \times \boldsymbol{L})$-orbits are in one-to-one correspondence with isomorphism classes of $\boldsymbol\theta$-filtered Stokes $G$-representations by Lemma \ref{lem_corr_pi'_pi''}. We will introduce a particular character $\chi_{\boldsymbol\theta}: G \times \boldsymbol{L} \rightarrow \mathbb{G}_m$ such that a $\boldsymbol\theta$-filtered Stokes $G$-representation $\rho'$ is $R$-semistable if and only if the corresponding representation $\rho'' \in {\rm Hom}_{\mathbb{S}}(\Pi'',\boldsymbol\theta)$ is $\chi_{\boldsymbol\theta}$-semistable in the sense of GIT. Based on the equivalence of stability conditions, we use ${\rm Hom}_{\mathbb{S}}(\Pi'',\boldsymbol\theta)$ to construct the moduli space.

We define a character
\begin{align*}
    \chi_{\boldsymbol\theta}: G \times \boldsymbol{L} \rightarrow \mathbb{G}_m,
\end{align*}
as
\begin{align*}
    \chi_{\boldsymbol\theta} (g,(l_x)_{x \in \boldsymbol{D}}) = \chi_0(g) \cdot \prod_{x \in \boldsymbol{D}} \chi_{-d \theta_x}(l_x),
\end{align*}
where $\chi_0$ is the trivial character of $G$ and the character $\chi_{-d\theta_x}$ is determined by the weight $-d \theta_x$. Given a cocharacter $\lambda: \mathbb{G}_m \rightarrow G \times \boldsymbol{L}$, it is given by a cocharacter $\lambda_0$ of $G$ and a cocharacter $\lambda_x$ of $L_{\theta_x}$ for each $x \in \boldsymbol{D}$. Thus, the pairing $\langle \lambda, \chi_{\boldsymbol\theta} \rangle$ is given by
\begin{align*}
    \langle \lambda, \chi_{\boldsymbol\theta} \rangle = \langle \lambda_0, \chi_0 \rangle + \sum_{x \in \boldsymbol{D}}\langle \lambda_x , \chi_{-d\theta_x} \rangle = \sum_{x \in \boldsymbol{D}}\langle \lambda_x , \chi_{-d\theta_x} \rangle.
\end{align*}
With respect to the $(G \times \boldsymbol{L})$-action and character $\chi_{\boldsymbol\theta}$, King defined the GIT quotient ${\rm Hom}_{\mathbb{S}}(\Pi'',\boldsymbol\theta) / \! \! / (G \times \boldsymbol{L}, \chi_{\boldsymbol\theta})$, which parametrizes GIT equivalence classes of $\chi_{\boldsymbol\theta}$-semistable points in ${\rm Hom}_{\mathbb{S}}(\Pi'',\boldsymbol\theta)$. We refer the reader to \cite[\S 2]{King94} for more details about this construction. Applying \cite[Proposition 2.5, 2.6]{King94}, we have the following equivalent description of $\chi_{\boldsymbol\theta}$-semistable points.

\begin{lem}\label{lem_King_prop2.5}
Denote by $\Delta$ the kernel of the $(G \times \boldsymbol{L})$-action on ${\rm Hom}_{\mathbb{S}}(\Pi'',\boldsymbol\theta)$. A point $\rho'' \in {\rm Hom}_{\mathbb{S}}(\Pi'',\boldsymbol\theta)$ is $\chi_{\boldsymbol\theta}$-semistable if and only if $\chi_{\boldsymbol\theta}(\Delta) = \{1\}$ and any cocharacter $\lambda$ of $G \times \boldsymbol{L}$, for which the limit $\lim_{t \rightarrow 0} \lambda(t) \cdot \rho''$ exists, satisfies $\langle \lambda , \chi_{\boldsymbol\theta} \rangle \geq 0$. It is $\chi_{\boldsymbol\theta}$-stable if and only if any cocharacter $\lambda$, for which $\lim_{t \rightarrow 0} \lambda(t) \cdot \rho''$ exists and $\langle \lambda, \chi_{\boldsymbol\theta} \rangle = 0$, is in $\Delta$. Moreover, two $\chi_{\boldsymbol\theta}$-semistable points $\rho''_1$ and $\rho''_2$ are GIT equivalent if and only if there are cocharacters $\lambda_1$ and $\lambda_2$ such that $\langle \lambda_1, \chi_{\boldsymbol\theta} \rangle = \langle \lambda_2, \chi_{\boldsymbol\theta} \rangle = 0$ and the limits $\lim_{t \rightarrow 0} \lambda_1(t) \cdot \rho''_1$ and $\lim_{t \rightarrow 0} \lambda_2(t) \cdot \rho''_2$ are in the same $(G \times \boldsymbol{L})$-orbit.
\end{lem}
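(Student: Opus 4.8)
The plan is to deduce the three characterizations directly from King's general GIT criteria \cite[Propositions 2.5, 2.6]{King94}, so the real content is verifying that our setup falls within King's framework and then matching his numerical conditions to the pairing $\langle \lambda, \chi_{\boldsymbol\theta}\rangle$ computed above. First I would record that the acting group $G \times \boldsymbol{L}$ is connected reductive: $G$ is reductive by hypothesis, and each factor $L_{\theta_x}$ of $\boldsymbol{L} = \prod_{x \in \boldsymbol{D}} L_{\theta_x}$ is the Levi subgroup of the parabolic $P_{\theta_x}$, hence reductive, and a finite product of connected reductive groups is again connected reductive. Next I would observe that ${\rm Hom}_{\mathbb{S}}(\Pi'',\boldsymbol\theta)$ carries a linear $(G \times \boldsymbol{L})$-action of the type King considers, since it is cut out inside the affine variety ${\rm Hom}(\Pi'',G) \subseteq (G \times G)^g \times \prod_{x \in \boldsymbol{D}}\big(G \times G \times \prod_{d \in \mathbb{A}_x} G\big)$ by the $\mathbb{S}$-conditions and the membership conditions defining $[\boldsymbol{P}]$, all of which are $(G \times \boldsymbol{L})$-invariant. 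The character $\chi_{\boldsymbol\theta}$ then linearizes the trivial line bundle, and the GIT quotient ${\rm Hom}_{\mathbb{S}}(\Pi'',\boldsymbol\theta)/\!\!/(G \times \boldsymbol{L}, \chi_{\boldsymbol\theta})$ is precisely the object to which King's analysis applies.

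With the framework in place, the second step is to transcribe King's criteria. King's notion of $\chi_{\boldsymbol\theta}$-semistability is the existence of a $(G \times \boldsymbol{L})$-semi-invariant of some positive $\chi_{\boldsymbol\theta}$-weight that is nonzero at the point; a necessary condition for any such function to exist is that $\chi_{\boldsymbol\theta}$ be trivial on the kernel $\Delta$ of the action, which is exactly the requirement $\chi_{\boldsymbol\theta}(\Delta) = \{1\}$ in the statement. The Hilbert--Mumford reformulation in \cite[Proposition 2.5]{King94} then says that, once $\chi_{\boldsymbol\theta}(\Delta) = \{1\}$ holds, semistability of $\rho''$ is equivalent to $\langle \lambda, \chi_{\boldsymbol\theta}\rangle \geq 0$ for every cocharacter $\lambda$ of $G \times \boldsymbol{L}$ whose degeneration $\lim_{t \to 0}\lambda(t)\cdot\rho''$ exists in ${\rm Hom}_{\mathbb{S}}(\Pi'',\boldsymbol\theta)$, while stability corresponds to the sharper demand that $\langle\lambda,\chi_{\boldsymbol\theta}\rangle = 0$ force $\lambda$ into $\Delta$. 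Here I would feed in the explicit expression $\langle \lambda, \chi_{\boldsymbol\theta}\rangle = \sum_{x \in \boldsymbol{D}} \langle \lambda_x, \chi_{-d\theta_x}\rangle$ established above, so that the abstract weight becomes the pairing in the statement; in particular the triviality of $\chi_0$ makes the $G$-factor $\lambda_0$ contribute nothing. The description of GIT equivalence is then read off from \cite[Proposition 2.6]{King94}: two semistable points are identified in the quotient precisely when each admits a weight-zero degeneration, via cocharacters $\lambda_1, \lambda_2$ with $\langle\lambda_i,\chi_{\boldsymbol\theta}\rangle = 0$, landing in a common $(G \times \boldsymbol{L})$-orbit.

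The main obstacle I anticipate is not the numerical bookkeeping but confirming that King's hypotheses genuinely hold for ${\rm Hom}_{\mathbb{S}}(\Pi'',\boldsymbol\theta)$. The delicate point is that the $\mathbb{S}$-conditions are imposed through an existential quantifier (``there exists $g_x$ with $g_x^{-1} l''_x \in L_{\theta_x}$ and $h''_x g_x \in P_{-\theta_x}$''), so a priori the locus is only locally closed, rather than closed, in the affine ambient space. I would therefore need to check that the fibre-product construction defining ${\rm Hom}_{\mathbb{S}}(\Pi'',\boldsymbol\theta)$ yields a genuine $(G \times \boldsymbol{L})$-invariant variety on which King's affine GIT is valid, or else pass to a closed invariant model carrying the same orbits and the same one-parameter degenerations. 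Once this compatibility is settled, together with the decomposition $\lambda = (\lambda_0, (\lambda_x)_{x \in \boldsymbol{D}})$ and the behaviour of $\Delta$ under it, the three equivalences are immediate instances of King's propositions.
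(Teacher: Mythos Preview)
Your proposal is correct and matches the paper's approach exactly: the paper simply states this lemma as a direct application of \cite[Propositions 2.5, 2.6]{King94} with no further argument. Your additional care in checking that the $(G\times\boldsymbol{L})$-action and the variety ${\rm Hom}_{\mathbb{S}}(\Pi'',\boldsymbol\theta)$ fit King's hypotheses is appropriate diligence, but the paper does not spell this out.
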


\begin{rem}\label{rem_cochar}
We give a precise description of the cocharacter $\lambda$ such that $\lim\limits_{t \rightarrow 0} \lambda(t) \cdot \rho''$ exists, and we regard $\rho''$ as a tuple
\begin{align*}
( (a''_i,b''_i)_{1 \leq i \leq g}, (l''_x, h''_x, S''_{x,d} )_{x \in \boldsymbol{D}, d \in \mathbb{A}_{x}} ).
\end{align*}
Given a cocharacter $\lambda: \mathbb{G}_m \rightarrow G \times \boldsymbol{L}$, it is uniquely determined by a cocharacter $\lambda_0$ of $G$ and a cocharacter $\lambda_x$ of $L_{\theta_x}$ for each $x \in \boldsymbol{D}$. Suppose that the limit $\lim\limits_{t \rightarrow 0} \lambda(t) \cdot \rho''$ exists. Then the existence of the limits
\begin{align*}
    \lim\limits_{t \rightarrow 0} \lambda_0(t) a''_i \lambda^{-1}_0(t), \ \lim\limits_{t \rightarrow 0} \lambda_0(t) b''_i \lambda^{-1}_0(t), \ \lim\limits_{t \rightarrow 0} \lambda_0(t) S''_{x,d} \lambda^{-1}_0(t)
\end{align*}
implies that $a''_i, b''_i, S''_{x,d} \in P_{\lambda_0}$, and the existence of the limits
\begin{align*}
    \lim\limits_{t \rightarrow 0} \lambda_0(t) l''_x \lambda^{-1}_x(t), \ \lim\limits_{t \rightarrow 0} \lambda_x(t) h''_x \lambda^{-1}_0(t)
\end{align*}
implies that $l''_x h''_x \in P_{\lambda_0}$. Therefore, the corresponding representation $\rho' \in {\rm Hom}_{\mathbb{S}}(\Pi',G)$ of $\rho''$ (under the morphism $\Pi' \rightarrow \Pi''$) is compatible with $P_{\lambda_0}$. Moreover, we claim that for each $x \in \boldsymbol{D}$, there exists $g_x \in G$ such that $\lambda_x(t) = g^{-1}_x \lambda_0(t) g_x$, $g^{-1}_x l''_x  \in L_{\theta_x}$ and $h''_x g_x \in P_{-\theta_x}$. Therefore, we have
\begin{equation}\tag{$\bullet$}\label{eq_lambda_chi}
    \langle \lambda, \chi_{\boldsymbol\theta} \rangle = \sum_{x \in \boldsymbol{D}} \langle \lambda_x, \chi_{-d \theta_x} \rangle = -d \sum_{x \in \boldsymbol{D}} \langle \theta_x, \chi_{\lambda_x} \rangle = -d \sum_{x \in \boldsymbol{D}} \langle \theta_x, \chi_{\lambda_0} \rangle = -d \langle \boldsymbol\theta , \chi_{\lambda_0} \rangle.
\end{equation}

\space{\hfill}

Here is a brief explanation for the claim. By construction, for each $x \in \boldsymbol{D}$, there exists $g'_x \in G$ such that $g'^{-1}_x l''_x \in L_{\theta_x}$ and $h''_x g'_x \in P_{-\theta_x}$. The existence of the limit
\begin{align*}
    g'^{-1}_x (\lim_{t \rightarrow 0}  \lambda_0(t) l''_x h''_x \lambda^{-1}_0(t)) g'_x = \lim_{t \rightarrow 0} (g'^{-1}_x \lambda_0(t) g'_x) (g'^{-1}_x l''_x h''_x g'_x) (g'^{-1}_x \lambda^{-1}_0(t) g'_x)
\end{align*}
shows that $g'^{-1}_x \lambda_0(t) g'_x$ is a cocharacter of some maximal torus in $L_{\theta_x}$. We choose $l_x \in L_{\theta_x}$ and define $g_x: = g'_x l_x$ such that $g_x^{-1} \lambda_0(t) g_x$ and $\lambda_x(t)$ are cocharacters of the same maximal torus in $L_{\theta_x}$. Then we consider the limits
\begin{align*}
    & g^{-1}_x (\lim_{t \rightarrow 0} \lambda_0(t) l''_x \lambda^{-1}_x(t) ) = \lim_{t \rightarrow 0} (g^{-1}_x \lambda_0(t) g_x) (g^{-1}_x l''_x) \lambda^{-1}_x(t) \\
    & (\lim_{t \rightarrow 0} \lambda^{-1}_x(t) h''_x \lambda_0^{-1}(t)) g_x = \lim_{t \rightarrow 0} \lambda^{-1}_x(t) (h''_x g_x) (g^{-1}_x \lambda^{-1}_0(t) g_x).
\end{align*}
Note that
\begin{align*}
    g^{-1}_x l''_x = l^{-1}_x g'^{-1}_x l''_x \in L_{\theta_x}, \ h''_x g_x = h''_x g'_x l_x \in P_{-\theta_x},
\end{align*}
and $g^{-1}_x \lambda_0(t) g_x$ and $\lambda_x(t)$ are cocharacters of the same maximal torus in $L_{\theta_x}$. Therefore, the existence of the above two limits imply $g^{-1}_x \lambda_0(t) g_x = \lambda_x(t)$.
\end{rem}

\begin{prop}\label{prop_stab_equiv}
Given a point $\rho'' \in {\rm Hom}_{\mathbb{S}}(\Pi'',\boldsymbol\theta)$, denote by $\rho'$ the corresponding $\boldsymbol\theta$-filtered Stokes $G$-representation. Then, $\rho'$ is $R$-semistable (resp. $R$-stable) of degree zero if and only if the point $\rho''$ is $\chi_{\boldsymbol\theta}$-semistable (resp. $\chi_{\boldsymbol\theta}$-stable). Moreover, two $\chi_{\boldsymbol\theta}$-semistable points $\rho''_1$ and $\rho''_2$ are GIT equivalent if and only if the corresponding $R$-semistable $\boldsymbol\theta$-filtered Stokes $G$-representations $\rho'_1$ and $\rho'_2$ are $S$-equivalent.
\end{prop}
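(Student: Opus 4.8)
The plan is to translate, clause by clause, the GIT description of $\chi_{\boldsymbol\theta}$-(semi)stability from Lemma \ref{lem_King_prop2.5} into the $R$-(semi)stability condition of Definition \ref{defn_stab_cond_G_Stokes}, using the numerical bridge recorded in Remark \ref{rem_cochar}. The heart of the matter is a dictionary between one-parameter subgroups and parabolic reductions: to a cocharacter $\lambda$ of $G\times\boldsymbol{L}$, determined by $\lambda_0$ on $G$ and $(\lambda_x)$ on $\boldsymbol{L}$, for which $\lim_{t\to 0}\lambda(t)\cdot\rho''$ exists, Remark \ref{rem_cochar} attaches the parabolic $P_{\lambda_0}$ (with which $\rho'$ is automatically compatible) and the identity
\begin{align*}
\langle\lambda,\chi_{\boldsymbol\theta}\rangle = -d\,\langle\boldsymbol\theta,\chi_{\lambda_0}\rangle = -d\,\deg^{\rm loc}\rho'(P_{\lambda_0},\chi_{\lambda_0}).
\end{align*}
Since $d>0$ and, by Lemma \ref{lem_char_cochar}, $\chi_{\lambda_0}$ is a dominant character of $P_{\lambda_0}$, the character $-\chi_{\lambda_0}$ is anti-dominant, so the sign bookkeeping matches the inequality $\deg^{\rm loc}\geq 0$ demanded in Definition \ref{defn_stab_cond_G_Stokes} with the inequality $\langle\lambda,\chi_{\boldsymbol\theta}\rangle\geq 0$ demanded in Lemma \ref{lem_King_prop2.5}.

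For the forward implication I would start from an $R$-semistable $\rho'$ of degree zero. The degree-zero hypothesis should be precisely what forces $\chi_{\boldsymbol\theta}(\Delta)=\{1\}$, the first clause of semistability in Lemma \ref{lem_King_prop2.5}. For the second clause, take any $\lambda$ with existing limit; Remark \ref{rem_cochar} makes $\rho'$ compatible with $P_{\lambda_0}$, and applying $R$-semistability to the anti-dominant character $-\chi_{\lambda_0}$ together with the displayed identity yields $\langle\lambda,\chi_{\boldsymbol\theta}\rangle\geq 0$. The stable case is identical with strict inequalities, the case of equality being pushed into $\Delta$.

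The reverse implication is where I expect the real work. Given a proper parabolic $P$ compatible with $\rho'$ and a nontrivial anti-dominant center-trivial character $\chi$, I must manufacture an \emph{actual} cocharacter $\lambda=(\lambda_0,(\lambda_x))$ of $G\times\boldsymbol{L}$ whose limit exists, so that the inequality supplied by $\chi_{\boldsymbol\theta}$-semistability can be read back as $\deg^{\rm loc}\rho'(P,\chi)\geq 0$. The cocharacter $\lambda_0$ should come from $-\chi$ via $\theta_{-\chi}$ (cleared of denominators), so that $P_{\lambda_0}\supseteq P$ by Lemma \ref{lem_char_cochar}; the components $\lambda_x$ must then be built as $\lambda_x=g_x^{-1}\lambda_0 g_x$, reversing the claim established in Remark \ref{rem_cochar}, where the $g_x$ are the conjugators witnessing $g_x^{-1}l''_x\in L_{\theta_x}$ and $h''_x g_x\in P_{-\theta_x}$. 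The main obstacle is verifying, componentwise, that $\lim_{t\to 0}\lambda(t)\cdot\rho''$ genuinely exists inside ${\rm Hom}_{\mathbb{S}}(\Pi'',\boldsymbol\theta)$: one must check that $a''_i,b''_i,S''_{x,d}\in P_{\lambda_0}$ (from compatibility of $\rho'$ with $P\subseteq P_{\lambda_0}$) and that the pair $l''_x,h''_x$ degenerates compatibly under the split one-parameter family $(\lambda_0,\lambda_x)$. Turning an abstract admissible parabolic reduction into a bona fide simultaneous degeneration of all the group-element data is the delicate step.

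Finally, for the comparison of equivalences I would invoke the last clause of Lemma \ref{lem_King_prop2.5}. Two $\chi_{\boldsymbol\theta}$-semistable points $\rho''_1,\rho''_2$ are GIT equivalent precisely when there are cocharacters $\lambda_1,\lambda_2$ with $\langle\lambda_i,\chi_{\boldsymbol\theta}\rangle=0$ whose limits share a $(G\times\boldsymbol{L})$-orbit. Via the displayed identity, $\langle\lambda_i,\chi_{\boldsymbol\theta}\rangle=0$ is exactly the admissibility condition $\deg^{\rm loc}\rho'_i(P_{\lambda_{0,i}},\chi_{\lambda_{0,i}})=0$, while the limit $\lim_{t\to 0}\lambda_i(t)\cdot\rho''_i$ is, under the correspondence of Lemma \ref{lem_corr_pi'_pi''}, the Levi restriction $(\rho'_i)_{L_i}$ entering the definition of $S$-equivalence. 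Matching these identifies GIT equivalence with $S$-equivalence; the one point needing care is that vanishing of the pairing against the single character $\chi_{\lambda_0}$ upgrades to admissibility against all center-trivial characters, which I would argue using the canonical socle-type nature of the degree-zero reductions of an $R$-semistable representation.
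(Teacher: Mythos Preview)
Your outline is essentially the paper's proof: the same bridge identity \eqref{eq_lambda_chi}, the same construction $\lambda_0=\lambda_{-\chi}$ with $\lambda_x=g_x^{-1}\lambda_0 g_x$ for the direction $\chi_{\boldsymbol\theta}$-semistable $\Rightarrow$ $R$-semistable, and the same reading of the last clause of Lemma \ref{lem_King_prop2.5} for the equivalence statements.

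Three places where the paper supplies detail you leave open. First, the paper treats $G$ semisimple and then reduces the reductive case by decomposing $\lambda_0$ into its $[G,G]$-part and its $R(G)$-part, using degree zero to kill the latter; you do not mention this split. Second, for the ``admissibility upgrade'' (vanishing against $\chi_{\lambda_0}$ alone forces vanishing against every center-trivial character of $P_{\lambda_0}$) the paper does not run a socle-type argument but instead embeds $G\hookrightarrow\mathrm{GL}(V)$ and invokes \cite[Lemma 3.22]{HS23}; a direct convexity argument---$\chi_{\lambda_0}$ is interior to the dominant cone of $P_{\lambda_0}$, and $R$-semistability gives $\langle\boldsymbol\theta,\cdot\rangle\geq 0$ on the anti-dominant cone---would also work and is closer to your intuition. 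Third, in the direction $S$-equivalent $\Rightarrow$ GIT-equivalent, knowing that $g\cdot(\rho'_1)_{L_1}=(\rho'_2)_{L_2}$ only matches $G$-orbits in ${\rm Hom}_{\mathbb{S}}(\Pi',G)$; to match $(G\times\boldsymbol{L})$-orbits in ${\rm Hom}_{\mathbb{S}}(\Pi'',\boldsymbol\theta)$ one must manufacture the components $l_x\in L_{\theta_x}$, and the paper computes $l_x=((l''_{2x})_{L_2})^{-1}\, g\, (l''_{1x})_{L_1}$ and checks it lands in $L_{\theta_x}$ via $g_{2x}^{-1} g\, g_{1x}\in L_{\theta_x}$.
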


\begin{proof}
We suppose that $G$ is semisimple first. We regard $\rho''$ as a tuple
\begin{align*}
    ( (a''_i,b''_i)_{1 \leq i \leq g}, (l''_x, h''_x, S''_{x,d} )_{x \in \boldsymbol{D}, d \in \mathbb{A}_{x}} ) \in {\rm Hom}_{\mathbb{S}}(\Pi'',\boldsymbol\theta).
\end{align*}
Suppose that the point $\rho''$ is $\chi_{\boldsymbol\theta}$-semistable. We choose a parabolic subgroup $P$ compatible with $\rho'$ and pick an arbitrary anti-dominant character $\chi$ of $P$, which is trivial on the center of $P$. We have
\begin{equation*}
    \langle \boldsymbol\theta, \chi \rangle = \frac{1}{d} \sum_{x \in \boldsymbol{D}} \langle d \theta_x, \chi \rangle =  \frac{1}{d} \sum_{x \in \boldsymbol{D}} \langle -d \theta_x, -\chi \rangle = \frac{1}{d} \sum_{x \in \boldsymbol{D}} \langle \lambda_{ - \chi} , \chi_{- d \theta_x} \rangle = \frac{1}{d} \langle \lambda_{-\chi}, \chi_{\boldsymbol\theta} \rangle.
\end{equation*}
The cocharacter $\lambda_{-\chi}$ and the element $\rho''$ determine a cocharacter $\lambda: \mathbb{G}_m \rightarrow G_{\boldsymbol\theta}$ such that
\begin{align*}
    \lambda = (\lambda_0, \lambda_x, x \in \boldsymbol{D}), \ \lambda_0:= \lambda_{-\chi}, \ \lambda_x(t) := g^{-1}_x \lambda_0(t) g_x,
\end{align*}
where $g_x$ is given in Remark \ref{rem_cochar}. By Lemma \ref{lem_char_cochar}, we have $P_{\lambda_{-\chi}} \supseteq P$. By the compatibility of $P$ with $\rho'$, the limit $\lim_{t \rightarrow 0} \lambda(t) \cdot \rho''$ exists. Since $\rho''$ is  $\chi_{\boldsymbol\theta}$-semistable by assumption, we have $\langle \lambda_{-\chi}, \chi_{\boldsymbol\theta} \rangle \geq 0$ by Lemma \ref{lem_King_prop2.5}, and thus $\langle \boldsymbol\theta, \chi \rangle \geq 0$. Therefore, $\rho'$ is $R$-semistable.

To prove that $\rho'$ is of degree zero, we suppose that $\rho''$ is $\chi_{\boldsymbol\theta}$-stable and $\rho'$ is $R$-stable for convenience. Given a character $\chi: G \rightarrow \mathbb{G}_m$, the corresponding cocharacter $\lambda_{\chi}$ has the property that its image is in the center of $G$, and thus in $\Delta$. In this case, we always have $\langle \lambda_{\chi}, \chi_{\boldsymbol\theta} \rangle = 0$. Then the formula \eqref{eq_lambda_chi} implies
\begin{align*}
\langle \boldsymbol\theta, \chi \rangle = -d \langle \lambda_{\chi} , \chi_{\boldsymbol\theta}  \rangle = 0.
\end{align*}
Therefore, $\rho'$ is of degree zero.

For the other direction, we suppose that $\rho'$ is $R$-semistable of degree zero. Clearly, $\chi_{\boldsymbol\theta}(\Delta)=\{1\}$ because $\rho'$ is of degree zero. We take a cocharacter $\lambda: \mathbb{G}_m \rightarrow G \times \boldsymbol{L}$ such that the limit $\lim\limits_{t \rightarrow 0} \lambda(t) \cdot \rho''$ exists. Remark \ref{rem_cochar} shows that
\begin{align*}
    a''_i, \, b''_i, \, S''_{x,j}, \, l''_x h''_x \in P_{\lambda_0}.
\end{align*}
Therefore, $P_{\lambda_0}$ is compatible with $\rho'$. Also, $\chi_{\lambda_0}$ is a dominant character of $P_{\lambda_0}$ by Lemma \ref{lem_char_cochar}. Then the formula \eqref{eq_lambda_chi} gives
\begin{align*}
\langle \lambda, \chi_{\boldsymbol\theta} \rangle =  -d \langle \boldsymbol\theta , \chi_{\lambda_0} \rangle \geq 0
\end{align*}
because $\rho'$ is $R$-semistable. This finishes the proof for the semistable case. The argument for the stable case is similar.

When $G$ is reductive, let $R(G)$ be its radical. By Remark \ref{rem_cochar}, a cocharacter $\lambda : \mathbb{G}_m \rightarrow G \times \boldsymbol{L}$, of which the limit $\lim\limits_{t \rightarrow 0} \lambda(t) \cdot \phi$ exists, is uniquely determined by a cocharacter $\lambda_0: \mathbb{G}_m \rightarrow G$. Moreover, $\lambda_0$ is uniquely determined by a cocharacter $\lambda_{ss}$ of the semisimple group $[G,G]$ and a cocharacter $\lambda_{R(G)}$ of $R(G)$. Thus, the formula \eqref{eq_lambda_chi} gives
\begin{align*}
\langle \lambda, \chi_{\boldsymbol\theta} \rangle = -d \langle \boldsymbol\theta , \chi_{\lambda_0} \rangle = -d (\langle \boldsymbol\theta , \chi_{\lambda_{ss}} \rangle + \langle \boldsymbol\theta , \chi_{\lambda_{R(G)}} \rangle ).
\end{align*}
Note that there is a natural injection of characters
\begin{align*}
    {\rm Hom}(G, \mathbb{G}_m) \rightarrow {\rm Hom}(R(G),\mathbb{G}_m),
\end{align*}
whose image is of finite index. Therefore, ${\rm Hom}(G, \mathbb{G}_m) \otimes_{\mathbb{Z}} \mathbb{Q} \cong {\rm Hom}(R(G), \mathbb{G}_m) \otimes_{\mathbb{Z}} \mathbb{Q}$. By definition of degree zero, we know that for any character $\chi$ of $G$, we have $\langle \boldsymbol\theta, \chi \rangle =0$. Then, $\langle \boldsymbol\theta , \chi_{\lambda_{R(G)}} \rangle = 0$. Therefore,
\begin{align*}
    \langle \lambda, \chi_{\boldsymbol\theta} \rangle = -d \langle \boldsymbol\theta , \chi_{\lambda_{ss}} \rangle = \langle \lambda_{ss}, \chi_{\boldsymbol\theta} \rangle,
\end{align*}
and we reduces it to the semisimple case.

\hspace{\fill}

For the second statement about $S$-equivalence, we introduce some notations first. Given $\rho'' \in {\rm Hom}_{\mathbb{S}}(\Pi'',\boldsymbol\theta)$, let $\lambda: \mathbb{G}_m \rightarrow G \times \boldsymbol{L}$ be a cocharacter given by a tuple $(\lambda_0, \lambda_x, x \in \boldsymbol{D})$, where $\lambda_0: \mathbb{G}_m \rightarrow G$ and $\lambda_x : \mathbb{G}_m \rightarrow L_{\theta_x}$ are cocharacters, such that the limit $\lim_{t \rightarrow 0} \lambda(t) \cdot \rho''$ exists. Denote by $P$ the parabolic subgroup given by $\lambda_0$ with Levi subgroup $L$. We introduce the following notations:
\begin{align*}
    & (a''_i)_{L} := \lim_{t \rightarrow 0} \lambda_0(t) a''_i \lambda^{-1}_0(t), \
    (b''_i)_{L} := \lim_{t \rightarrow 0} \lambda_0(t) b''_i \lambda^{-1}_0(t), \
    (S''_{x,d})_{L} := \lim_{t \rightarrow 0} \lambda_0(t) S''_{x,d} \lambda^{-1}_0(t), \\
    & (l''_x)_{L} := \lim_{t \rightarrow 0} \lambda_0(t) l''_x \lambda^{-1}_x(t), \
    (h''_x)_{L} := \lim_{t \rightarrow 0} \lambda_x(t) h''_x \lambda^{-1}_0(t), \
    (\rho'')_{L} := \lim_{t \rightarrow 0} \lambda(t) \cdot \rho'',
\end{align*}
and clearly,
\begin{align*}
    (a''_i)_L, \ (b''_i)_L, \ (S''_{x,d})_L, \ (l''_x)_L (h''_x)_L \in L.
\end{align*}
Denote by $\rho': \Pi' \rightarrow G$ the corresponding representation of $\rho''$ under the morphism $\Pi' \rightarrow \Pi''$, and then $(\rho')_L$ is the corresponding representation of $(\rho'')_L$. In the following, we suppose that $G$ is semisimple, and the reductive case can be reduced to the semisimple case as we discussed above.

Assume that $\rho''_1$ and $\rho''_2$ are GIT equivalent. By Lemma \ref{lem_King_prop2.5}, there exist cocharacters $\lambda_1$ and $\lambda_2$ of $G \times \boldsymbol{L}$ such that
\begin{itemize}
    \item $\langle \lambda_1 , \chi_{\boldsymbol\theta} \rangle = \langle \lambda_2 , \chi_{\boldsymbol\theta} \rangle = 0$,
    \item the limits $(\rho''_1)_{L_1} = \lim\limits_{t \rightarrow 0} \lambda_1(t) \cdot \rho''_1$ and $(\rho''_2)_{L_2} = \lim\limits_{t \rightarrow 0} \lambda_2(t) \cdot \rho''_2$ exist,
    \item $(\rho''_1)_{L_1}$ and $(\rho''_2)_{L_2}$ are in the same $(G \times \boldsymbol{L})$-orbit.
\end{itemize}
There exists $(g, (l_x)_{x \in \boldsymbol{D}}) \in G \times \boldsymbol{L}$ such that
\begin{align*}
    (g, (l_x)_{x \in \boldsymbol{D}}) \cdot (\lim\limits_{t \rightarrow 0} \lambda_1(t) \cdot \rho''_1 ) = \lim\limits_{t \rightarrow 0} \lambda_2(t) \cdot \rho''_2.
\end{align*}
By Lemma \ref{lem_corr_pi'_pi''}, we have
\begin{align*}
    g \cdot (\lim\limits_{t \rightarrow 0} \lambda_{10}(t) \cdot \rho'_1) = \lim\limits_{t \rightarrow 0} \lambda_{20}(t) \cdot \rho'_2.
\end{align*}
The cocharacters $\lambda_{10}$ and $\lambda_{20}$ determine parabolic subgroups $P_1$ and $P_2$ and Levi subgroups $L_1$ and $L_2$ respectively. Clearly,
\begin{align*}
    \lim\limits_{t \rightarrow 0} \lambda_{i0}(t) \cdot \rho'_i = (\rho'_i)_{L_i}, \ i=1,2 \ ,
\end{align*}
and thus
\begin{align*}
    g \cdot (\rho'_1)_{L_1} = (\rho'_2)_{L_2}.
\end{align*}
Now we have to prove that $\rho'_i$ is admissible with $P_i$ based on the condition $\langle \lambda_i, \chi_{\boldsymbol\theta} \rangle = -d \langle \boldsymbol\theta , \chi_{\lambda_{i0}} \rangle = 0$. It is equivalent to show that for any character $\chi_i: P_i \rightarrow \mathbb{G}_m$, we have $\langle
\boldsymbol\theta, \chi_i \rangle = 0$. With the same approach as in \cite{Ram96a} (for instance the proof of \cite[Lemma 3.5.8]{Ram96a}), it is equivalent to choose a faithful embedding $G \hookrightarrow {\rm GL}(V)$ and prove this property for general linear groups. Therefore, the argument can be proved in the same way as \cite[Lemma 3.22]{HS23}. In conclusion, $\rho'_1$ and $\rho'_2$ are $S$-equivalent.

For the other direction, suppose that $\rho'_1$ and $\rho'_2$ are $S$-equivalent, and then there exist parabolic subgroups $P_1$ and $P_2$ (with Levi subgroups $L_1$ and $L_2$) admissible with $\rho'_1$ and $\rho'_2$ respectively such that $g \cdot (\rho'_1)_{L_1} = (\rho'_2)_{L_2}$ for some $g \in G$. Clearly, $gP_1 g^{-1} = P_2$. We choose cocharacters $\lambda_{i0}: \mathbb{G}_m \rightarrow G$ such that $P_{\lambda_{i0}} = P_i$ for $i=1,2$. We define cocharacters $\lambda_i: \mathbb{G}_m \rightarrow G \times \boldsymbol{L}$ as
\begin{align*}
    \lambda_i = (\lambda_{i0}, \lambda_{ix}, x \in \boldsymbol{D}),
\end{align*}
where $\lambda_{ix}: = g^{-1}_{ix} \lambda_{i0} g_{ix}: \mathbb{G}_m \rightarrow L_{\theta_x}$ and $g_{ix} \in G$ is given in Remark \ref{rem_cochar}. Since $P_i$ is compatible with $\rho'_i$, we have
\begin{align*}
    \langle \lambda_i, \chi_{\boldsymbol\theta} \rangle = -d \langle \boldsymbol\theta , \chi_{\lambda_{i0}} \rangle = 0
\end{align*}
and the limit
\begin{align*}
    \lim_{t \rightarrow 0} \lambda_i(t) \cdot \rho''_i
\end{align*}
exist for $i =1,2$. By Lemma \ref{lem_King_prop2.5}, we only have to prove that $(\rho''_1)_{L_1}$ and $(\rho''_2)_{L_2}$ are in the same $G_{\boldsymbol\theta}$-orbit. Since $g \cdot (\rho'_1)_{L_1} = (\rho'_2)_{L_2}$, the key point is to find $l_x \in L_{\theta_x}$ for each $x \in \boldsymbol{D}$ such that
\begin{align*}
    (g , (l_x)_{x \in \boldsymbol{D}}) \cdot (\rho''_1)_{L_1} = (\rho''_2)_{L_2}.
\end{align*}
Consider the element
\begin{align*}
    g^{-1}_{ix} (l''_{ix})_{L_i} = \lim_{t \rightarrow 0} (g^{-1}_{ix} \lambda_{i0}(t) g_{ix}) (g^{-1}_{ix} l''_{ix}) \lambda^{-1}_{ix}(t).
\end{align*}
Since $g^{-1}_{ix} \lambda_{i0}(t) g_{ix} = \lambda_{ix}(t)$ and $g^{-1}_{ix} l''_{ix} \in L_{\theta_x}$, we have $g^{-1}_{ix} (l''_{ix})_{L_i} \in L_{\theta_x}$. With a similar argument, we have $(h''_{ix})_{L_i} g_{ix} \in L_{\theta_x}$. Note that
\begin{align*}
    g (l''_{1x})_{L_1}(h''_{1x})_{L_1} g^{-1} = (l''_{2x})_{L_2}(h''_{2x})_{L_2}.
\end{align*}
Reformulating the equation, we have
\begin{align*}
    g^{-1}_{2x} g g_{1x} ( g^{-1}_{1x} (l''_{1x})_{L_1} ) (( h''_{1x})_{L_1} g_{1x} ) g^{-1}_{1x} g^{-1} g_{2x}= ( g^{-1}_{2x} (l''_{2x})_{L_2} ) ((h''_{2x})_{L_2} g_{2x}).
\end{align*}
Therefore, $g^{-1}_{2x} g g_{1x} \in L_{\theta_x}$ because the normalizer of $L_{\theta_x}$ is itself. We define
\begin{align*}
    l_x := ((l''_{2x})_{L_2})^{-1} g (l''_{1x})_{L_1}
\end{align*}
Clearly,
\begin{align*}
    l_x = (g^{-1}_{2x}(l''_{2x})_{L_2})^{-1} (g^{-1}_{2x} g g_{1x}) (g^{-1}_{1x} (l''_{1x})_{L_1}) \in L_{\theta_x}.
\end{align*}
Since
\begin{align*}
    l_x =  ((l''_{2x})_{L_2})^{-1} g (l''_{1x})_{L_1} = (h''_{2x})_{L_2} g ((h''_{1x})_{L_1})^{-1},
\end{align*}
it is easy to check
\begin{align*}
    (g , (l_x)_{x \in \boldsymbol{D}}) \cdot (\rho''_1)_{L_1} = (\rho''_2)_{L_2}.
\end{align*}
Therefore, $(\rho''_1)_{L_1}$ and $(\rho''_2)_{L_2}$ are in the same $G_{\boldsymbol\theta}$-orbit.
\end{proof}

Under the equivalence of Stokes $G$-representations and Stokes $G$-local systems (Corollary \ref{cor_Stokes_rep_and_loc}), we obtain the moduli space of filtered Stokes $G$-local systems.

\begin{thm}\label{thm_Sto_moduli}
The quasi-projective variety
\begin{align*}
\mathcal{M}_{\rm B}(X_{\boldsymbol{D}},G,\boldsymbol{Q},\boldsymbol\theta) := {\rm Hom}_{\mathbb{S}}(\Pi'',\boldsymbol\theta) /\!\!/ (G \times \boldsymbol{L}, \chi_{\boldsymbol\theta})
\end{align*}
is the moduli space of degree zero $R$-semistable $\boldsymbol\theta$-filtered Stokes $G$-local systems with irregular type $\boldsymbol{Q}$ on $X_{\boldsymbol{D}}$, of which points are in one-to-one correspondence with $S$-equivalence classes of degree zero $R$-semistable $\boldsymbol\theta$-filtered Stokes $G$-local systems with irregular type $\boldsymbol{Q}$. There exists an open subset $\mathcal{M}^s_{\rm B}(X_{\boldsymbol{D}},G,\boldsymbol{Q},\boldsymbol\theta)$, of which points correspond to isomorphism classes of degree zero $R$-stable $\boldsymbol\theta$-filtered Stokes $G$-local systems with irregular type $\boldsymbol{Q}$.
\end{thm}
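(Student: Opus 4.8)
The plan is to realize $\mathcal{M}_{\rm B}(X_{\boldsymbol{D}},G,\boldsymbol{Q},\boldsymbol\theta)$ directly as the GIT quotient produced by King's machinery and then transport every assertion back to filtered Stokes $G$-local systems through the correspondences already in place. First I would record that $G \times \boldsymbol{L} = G \times \prod_{x \in \boldsymbol{D}} L_{\theta_x}$ is a connected reductive group, being a product of reductive groups, and that it acts on the quasi-projective variety ${\rm Hom}_{\mathbb{S}}(\Pi'',\boldsymbol\theta)$ with the linearization supplied by the character $\chi_{\boldsymbol\theta}$ on the trivial line bundle. Under the degree zero hypothesis the identity $\langle\lambda,\chi_{\boldsymbol\theta}\rangle = -d\langle\boldsymbol\theta,\chi_{\lambda_0}\rangle$ of Remark \ref{rem_cochar} (formula \eqref{eq_lambda_chi}) forces $\chi_{\boldsymbol\theta}(\Delta)=\{1\}$ on the kernel $\Delta$ of the action, exactly the condition needed to invoke \cite[\S 2]{King94}. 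King's construction then yields ${\rm Hom}_{\mathbb{S}}(\Pi'',\boldsymbol\theta)/\!\!/(G\times\boldsymbol{L},\chi_{\boldsymbol\theta})$ as a quasi-projective variety whose closed points are the GIT-equivalence classes of $\chi_{\boldsymbol\theta}$-semistable points, the semistability and GIT-equivalence notions being governed by the Hilbert--Mumford-type criteria recorded in Lemma \ref{lem_King_prop2.5}.

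Next I would run the dictionary. By Proposition \ref{prop_stab_equiv}, a point $\rho''$ is $\chi_{\boldsymbol\theta}$-semistable precisely when the associated $\rho'$ is a degree zero $R$-semistable $\boldsymbol\theta$-filtered Stokes $G$-representation, and two $\chi_{\boldsymbol\theta}$-semistable points are GIT-equivalent precisely when the associated representations are $S$-equivalent. Combining Lemma \ref{lem_corr_pi'_pi''}, which identifies $(G\times\boldsymbol{L})$-orbits in ${\rm Hom}_{\mathbb{S}}(\Pi'',\boldsymbol\theta)$ with isomorphism classes of $\boldsymbol\theta$-filtered Stokes $G$-representations, with Corollary \ref{cor_Stokes_rep_and_loc}, which identifies the latter with isomorphism classes of filtered Stokes $G$-local systems, the closed points of the quotient are therefore in bijection with $S$-equivalence classes of degree zero $R$-semistable $\boldsymbol\theta$-filtered Stokes $G$-local systems with irregular type $\boldsymbol{Q}$. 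This is the first assertion of the theorem.

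For the stable locus I would take $\mathcal{M}^s_{\rm B}$ to be the image of the open $\chi_{\boldsymbol\theta}$-stable locus; by King's construction the quotient restricted there is a geometric quotient, so its points are genuine $(G\times\boldsymbol{L})$-orbits. It then remains to see that on stable objects $S$-equivalence collapses to isomorphism: if $\rho'$ is $R$-stable, then no proper parabolic $P$ compatible with $\rho'$ can be admissible, since admissibility demands $\deg^{\rm loc}\rho'(P,\chi)=0$ for every character $\chi$ trivial on the center while $R$-stability forces strict positivity for the nontrivial anti-dominant ones. Hence the only admissible parabolic is $G$ itself, the Levi reduction $(\rho')_L$ equals $\rho'$, and the $S$-equivalence of two stable representations reduces to $G$-conjugacy, that is, to isomorphism of the underlying local systems. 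Thus the points of $\mathcal{M}^s_{\rm B}$ correspond to isomorphism classes of degree zero $R$-stable $\boldsymbol\theta$-filtered Stokes $G$-local systems, as claimed.

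The step I expect to be the main obstacle is the first one, namely rigorously placing King's affine GIT in the present quasi-projective setting. One must check that ${\rm Hom}_{\mathbb{S}}(\Pi'',\boldsymbol\theta)$ carries a $(G\times\boldsymbol{L})$-equivariant structure for which the $\chi_{\boldsymbol\theta}$-semi-invariants form a finitely generated graded ring with quasi-projective $\mathrm{Proj}$, and that the locally closed ``$\exists\,g_x$'' conditions cutting out the space are compatible with passing to the semistable locus. Everything downstream is bookkeeping resting on Proposition \ref{prop_stab_equiv} together with the orbit correspondences, so once the GIT quotient is correctly constructed the remaining identifications are formal.
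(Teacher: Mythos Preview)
Your proposal is correct and follows essentially the same approach as the paper: the paper's proof consists of a single sentence, ``By Proposition \ref{prop_stab_equiv}, the theorem follows directly from King's result \cite[\S 2]{King94},'' and your argument is precisely an unpacking of that sentence through Lemma \ref{lem_corr_pi'_pi''}, Corollary \ref{cor_Stokes_rep_and_loc}, and the Hilbert--Mumford criteria of Lemma \ref{lem_King_prop2.5}. Your added explanation of why $S$-equivalence collapses to isomorphism on the stable locus, and your flagging of the quasi-projective (rather than affine) setting for King's construction, are legitimate elaborations that the paper leaves implicit.
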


\begin{proof}
By Proposition \ref{prop_stab_equiv}, the theorem follows directly from King's result \cite[\S 2]{King94}.
\end{proof}

\section{Examples}\label{sect_eg}

We study some examples of the moduli space of filtered Stokes $G$-local systems in this section. In \S\ref{subsect_trivial_weight}, we consider the case of trivial weights. We conclude that the $R$-stability of filtered Stokes $G$-representations (with trivial weights) is equivalent to the irreducibility of the corresponding representations (Lemma \ref{lem_trivial_weight}) and the moduli space is an affine variety, which is known as the \emph{wild character variety} \cite{Boa14,DDP18}. In \S\ref{subsect_trivial_irrg}, we consider the case of trivial irregular types. If all irregular types are trivial, then the discussion completely reduces to the tame case, of which the moduli space has been constructed in \cite{HS23}. In \S\ref{subsect_EH_space}, we consider the Eguchi--Hanson space, of which the irregular type is unramified. In this case, we find a particular $\theta$-filtered Stokes $G$-local system, which is $R$-stable but not semisimple as a representation. This example shows that wild character varieties may not be the Betti moduli space in the nonabelian Hodge correspondence. In \S\ref{subsect_Airy} and \S\ref{subsect_sl2_ramified}, we start from the Airy equation and study Stokes ${\rm SL}_2(\mathbb{C})$-local systems with ramified irregular type on $X_{\boldsymbol{D}}$, where $(X,\boldsymbol{D}) = (\mathbb{P}^1, 0)$. In this case, we find that Stokes ${\rm SL}_2(\mathbb{C})$-representations are always irreducible. Therefore, the corresponding moduli space is exactly the wild character variety by Corollary \ref{cor_wild_char_var}.

\subsection{Trivial weights}\label{subsect_trivial_weight}
Fixing a collection of irregular types $\boldsymbol{Q}$, let $\Pi'$ be the group defined in \S\ref{subsect_Sto_G_rep}. Suppose that all weights are trivial, i.e. $\theta_x=0$ for $x \in \boldsymbol{D}$. We use the notation $\boldsymbol{0}$ for the collection of trivial weights. In this case, $L_{\theta_x} = G$. Then,
\begin{align*}
    G \times \boldsymbol{L} = G \times \prod_{x \in \boldsymbol{D}} G.
\end{align*}
Moreover, the character $\chi_{-d \theta_x}$ is also trivial, which implies that the character $\chi_{\boldsymbol{0}}: G \times \boldsymbol{L} \rightarrow \mathbb{G}_m$ is trivial.

\begin{lem}\label{lem_trivial_weight}
A degree zero $\boldsymbol{0}$-filtered Stokes $G$-representation $\rho': \Pi' \rightarrow G$ is $R$-stable (resp. $R$-semistable) if and only if it is an irreducible (resp. semisimple) representation.
\end{lem}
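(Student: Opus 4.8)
The plan is to translate both equivalences into Ramanathan's correspondence between reductions of $\rho'$ to parabolic subgroups and its sub-representations, exploiting the one feature special to the trivial-weight case: for \emph{every} parabolic subgroup $P$ compatible with $\rho'$ and every character $\chi$ of $P$ one has $\deg^{\mathrm{loc}}\rho'(P,\chi)=\langle\boldsymbol{0},\chi\rangle=0$. I first settle the stable case. A $G$-representation is irreducible precisely when no proper parabolic subgroup is compatible with it. If $\rho'$ is irreducible, the quantifier in Definition \ref{defn_stab_cond_G_Stokes} ranges over the empty set, so $\rho'$ is $R$-stable. If $\rho'$ is reducible it is compatible with some proper parabolic $P$, and for any nontrivial anti-dominant character $\chi$ of $P$ trivial on the centre --- such characters exist since $P$ is proper --- one finds $\deg^{\mathrm{loc}}\rho'(P,\chi)=0$, which violates the strict inequality. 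Hence $\rho'$ is not $R$-stable, and $R$-stable $\Leftrightarrow$ irreducible follows.

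For the semistable case, the direction from semisimplicity is immediate: if $\rho'$ is completely reducible then each defining inequality $\deg^{\mathrm{loc}}\rho'(P,\chi)\ge 0$ reads $0\ge 0$, so $\rho'$ is $R$-semistable. For the reverse direction I would turn the same vanishing into a construction of a semisimple model for $\rho'$. Since $\deg^{\mathrm{loc}}\rho'(P,\chi)=0$ for every compatible $P$, every parabolic compatible with $\rho'$ is automatically admissible in the sense of Definition \ref{defn_stab_cond_G_Stokes}. Choose a compatible parabolic $P$ minimal under inclusion, with Levi quotient $P\twoheadrightarrow L$, and form the Levi-reduction $\rho'_L$. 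Were $\rho'_L$ compatible with a proper parabolic $Q_L\subsetneq L$, then $\rho'$ would factor through the proper parabolic $Q_L\cdot R_u(P)\subsetneq P$ of $G$, contradicting minimality; thus $\rho'_L$ is $L$-irreducible, so by Borel--Tits the Zariski closure of its image is reductive and $\rho'_L$ is semisimple. Because $P$ is admissible, $\rho'$ is $S$-equivalent to the semisimple representation $\rho'_L$ and defines the same isomorphism class of $R$-semistable object; this identifies $\rho'$ with a semisimple representation and yields $R$-semistable $\Leftrightarrow$ semisimple.

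The crux, and the main obstacle, is precisely this degeneration of the degree. In contrast with the weighted case there is no slope inequality available to detect a destabilizing sub-object, so the forward implication of the semistable case cannot be obtained by comparing degrees and must instead be produced structurally, through the minimal compatible parabolic and the $L$-irreducibility of its Levi-reduction. The points needing care are the minimality argument establishing that $\rho'_L$ is genuinely semisimple, and, when $G$ is only reductive rather than semisimple, the reduction to $[G,G]$ performed exactly as in Proposition \ref{prop_stab_equiv} by splitting the relevant cocharacter into its semisimple and radical parts.
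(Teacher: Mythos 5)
Your stable case is exactly the paper's proof: irreducibility makes the set of compatible proper parabolics empty, so $R$-stability holds vacuously, and conversely any compatible proper parabolic $P$ together with a nontrivial anti-dominant character $\chi$ trivial on the center gives $\deg^{\rm loc}\rho'(P,\chi)=\langle\boldsymbol{0},\chi\rangle=0$, violating the strict inequality. Nothing to change there.

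For the semistable case the paper offers only ``the proof for semistable case is similar'', and you are right that it is not: with $\boldsymbol\theta=\boldsymbol{0}$ the inequality $\deg^{\rm loc}\rho'(P,\chi)\geq 0$ holds vacuously for \emph{every} representation, so the implication ``$R$-semistable $\Rightarrow$ semisimple'' cannot come from any degree comparison. Your structural replacement (every compatible parabolic is admissible; pass to a minimal compatible $P$; the Levi reduction $\rho'_L$ is $L$-irreducible, hence semisimple) is sound as far as it goes, but it proves only that $\rho'$ is $S$-\emph{equivalent} to a semisimple representation. The final step, where you say this ``defines the same isomorphism class'' and thereby identifies $\rho'$ with a semisimple representation, conflates $S$-equivalence with isomorphism: $\rho'_L$ is a degeneration of $\rho'$ (a limit $\lim_{t\to 0}\lambda(t)\cdot\rho'$), in general not conjugate to $\rho'$. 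In fact the literal implication is false, by the paper's own example in \S\ref{subsect_EH_space}: the representation constructed there, whose image lies in a unipotent subgroup, is indecomposable and not semisimple, yet it is $R$-semistable for the trivial weight because every degree vanishes. So what your argument establishes --- and what Corollary \ref{cor_wild_char_var} actually uses --- is that every $S$-equivalence class of $R$-semistable $\boldsymbol{0}$-filtered Stokes $G$-representations contains a semisimple representative, unique up to conjugation; the semistable clause of the lemma is only correct when read in that sense, and neither your argument, nor the paper's appeal to similarity, nor any other argument can upgrade $S$-equivalence to isomorphism on the nose.
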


\begin{proof}
Given an irreducible representation $\rho': \Pi' \rightarrow G$, it cannot be restricted to any proper nontrivial parabolic subgroup $P$. Then it is $R$-stable automatically. On the other hand, given a degree zero $R$-stable $\boldsymbol{0}$-filtered Stokes $G$-representation $\rho'$, suppose that a nontrivial proper parabolic subgroup $P$ is compatible with $\rho'$. We choose an arbitrary nontrivial anti-dominant character $\chi: P \rightarrow \mathbb{G}_m$, which is trivial on the center of $P$. We have
\begin{align*}
    \deg^{\rm loc} \rho'(P,\chi) = \langle \boldsymbol{0}, \chi \rangle = 0
\end{align*}
because $\theta_x$ is trivial for every $x \in \boldsymbol{D}$. This contradicts the assumption that $\rho'$ is $R$-stable. Therefore, any nontrivial proper parabolic subgroup is not compatible with $\rho'$, which means that $\rho'$ is irreducible. The proof for semistable case is similar.
\end{proof}

\begin{cor}\label{cor_wild_char_var}
The moduli space
\begin{align*}
    \mathcal{M}_{\rm B}(X_{\boldsymbol{D}},G,\boldsymbol{Q},\boldsymbol{0}) = {\rm Hom}_{\mathbb{S}}(\Pi'',\boldsymbol{0}) /\!\!/ (G \times \boldsymbol{L},\chi_{\boldsymbol{0}})
\end{align*}
is an affine variety, of which points correspond to isomorphism classes of semisimple Stokes $G$-representations. There exists an open subset $\mathcal{M}^s_{\rm B}(X_{\boldsymbol{D}},G,\boldsymbol{Q},\boldsymbol{0})$, of which points correspond to isomorphism classes of irreducible Stokes $G$-representations.
\end{cor}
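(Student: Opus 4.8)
The plan is to deduce the corollary from Theorem~\ref{thm_Sto_moduli} and Lemma~\ref{lem_trivial_weight}, exploiting the observation recorded just above the statement that for $\boldsymbol\theta=\boldsymbol{0}$ we have $L_{\theta_x}=G$ for each $x$, so $G\times\boldsymbol{L}=G\times\prod_{x\in\boldsymbol{D}}G$ and the linearizing character $\chi_{\boldsymbol{0}}$ is trivial. First I would establish affineness. When the character is trivial, King's construction collapses: the graded ring $\bigoplus_{n\geq 0}A_{n\chi_{\boldsymbol{0}}}$ of $\chi_{\boldsymbol{0}}^{n}$-semi-invariants has every graded piece equal to the ordinary invariant ring $A^{G\times\boldsymbol{L}}$ (here $A$ denotes the ring of global regular functions on ${\rm Hom}_{\mathbb{S}}(\Pi'',\boldsymbol{0})$), so it is $A^{G\times\boldsymbol{L}}[t]$ and its ${\rm Proj}$ is $\Spec A^{G\times\boldsymbol{L}}$; thus every point is $\chi_{\boldsymbol{0}}$-semistable and $\mathcal{M}_{\rm B}(X_{\boldsymbol{D}},G,\boldsymbol{Q},\boldsymbol{0})$ is the ordinary affine GIT quotient, hence affine. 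The cleanest rigorous way to see this, and to confirm that the source behaves affinely, is to identify the quotient with the wild character variety $\mathcal{M}_{\rm B}(X_{\boldsymbol{D}},G,\boldsymbol{Q})$ of \S\ref{subsect_glo_corr}, which is built as an affine GIT quotient of the affine variety ${\rm Hom}_{\mathbb{S}}(\Pi,G)$.

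Next I would read off the set-theoretic description from Theorem~\ref{thm_Sto_moduli}. Its points are the $S$-equivalence classes of degree-zero $R$-semistable $\boldsymbol{0}$-filtered Stokes $G$-local systems, and the distinguished open subset $\mathcal{M}^s_{\rm B}(X_{\boldsymbol{D}},G,\boldsymbol{Q},\boldsymbol{0})$ parametrizes isomorphism classes of the $R$-stable ones. By Lemma~\ref{lem_trivial_weight}, $R$-semistability is the same as semisimplicity and $R$-stability is the same as irreducibility of the corresponding representation in ${\rm Hom}_{\mathbb{S}}(\Pi',G)$. For the open subset this immediately gives the claimed identification of $\mathcal{M}^s_{\rm B}$ with isomorphism classes of irreducible Stokes $G$-representations.

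It remains to upgrade ``$S$-equivalence classes of semistable'' to ``isomorphism classes of semisimple.'' For this I would argue that a semisimple (polystable) Stokes $G$-representation is its own admissible Levi reduction: if $P$ is admissible with $\rho'$ then, by complete reducibility, the reduction $(\rho')_L$ to the Levi is conjugate to $\rho'$, so each $S$-equivalence class of $R$-semistable representations contains a single isomorphism class of semisimple representative. Equivalently, under the correspondence of Lemma~\ref{lem_corr_pi'_pi''} the points of the affine quotient are the closed $G$-orbits in ${\rm Hom}_{\mathbb{S}}(\Pi',G)$, and these are exactly the completely reducible, i.e. semisimple, representations. Combining this with the previous paragraph yields the bijection between points of $\mathcal{M}_{\rm B}(X_{\boldsymbol{D}},G,\boldsymbol{Q},\boldsymbol{0})$ and isomorphism classes of semisimple Stokes $G$-representations.

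I expect the main obstacle to lie in the affineness step rather than in the point count. Concretely, one must check that although ${\rm Hom}_{\mathbb{S}}(\Pi'',\boldsymbol{0})$ is constructed only as a (locally closed) quasi-projective variety, passing to the trivial-character GIT quotient genuinely produces an affine scheme; the safest route is the explicit identification with the affine wild character variety via Proposition~\ref{prop_reps} and Lemma~\ref{lem_corr_pi'_pi''}, matching the two rings of invariants over ${\rm Hom}_{\mathbb{S}}(\Pi',G)$. By contrast, the identification of $S$-equivalence with isomorphism on the semisimple locus is the $G$-bundle analogue of the familiar fact that a polystable object is determined up to isomorphism by its associated graded, and should present no serious difficulty.
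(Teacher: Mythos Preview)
Your approach is essentially the same as the paper's: invoke Theorem~\ref{thm_Sto_moduli} for the set-theoretic description, then translate via Lemma~\ref{lem_trivial_weight}. The paper is more laconic on two points where you are more careful. First, for affineness the paper simply asserts that with trivial weights ${\rm Hom}_{\mathbb{S}}(\Pi'',\boldsymbol{0})={\rm Hom}_{\mathbb{S}}(\Pi'',G)$ is already affine (since $P_{-\theta_x}=L_{\theta_x}=G$ makes the extra constraints vacuous), and then the trivial-character GIT quotient is affine; your detour through the identification with the affine wild character variety via Proposition~\ref{prop_reps} and Lemma~\ref{lem_corr_pi'_pi''} is a valid alternative, but the paper does not take it. Second, the paper does not spell out the passage from ``$S$-equivalence classes of $R$-semistable'' to ``isomorphism classes of semisimple'' and just cites Lemma~\ref{lem_trivial_weight}; your argument that a semisimple representation is its own admissible Levi reduction (equivalently, that closed orbits correspond to completely reducible representations) is the correct way to fill this in.
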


\begin{proof}
Since all weights $\theta_x$ are trivial, ${\rm Hom}_{\mathbb{S}}(\Pi'',\boldsymbol{0}) = {\rm Hom}_{\mathbb{S}}(\Pi'',G)$ is an affine variety. Since the character $\chi_{\boldsymbol{0}}$ is trivial, the GIT quotient ${\rm Hom}_{\mathbb{S}}(\Pi'',\boldsymbol{0}) /\!\!/ (G \times \boldsymbol{L},\chi_{\boldsymbol{0}})$ is an affine variety. By Theorem \ref{thm_Sto_moduli}, points in $\mathcal{M}_{\rm B}(X,G,\boldsymbol{Q},\boldsymbol{0})$ are in one-to-one correspondence with $S$-equivalence classes of degree zero $R$-semistable $\boldsymbol{0}$-filtered Stokes $G$-representations, which are exactly isomorphism classes of semisimple representations by Lemma \ref{lem_trivial_weight}. This finishes the proof of this corollary.
\end{proof}

The moduli space $\mathcal{M}_{\rm B}(X,G,\boldsymbol{Q},\boldsymbol{0})$ given above is exactly the wild character variety considered by Boalch \cite[\S8 and \S9]{Boa14}. The difference is that Boalch constructed the moduli space from the fundamental groupoid $\Pi$, while we construct the moduli space from $\Pi'$ and $\Pi''$. Moreover, we also refer the reader to \cite[Theorem 9.3]{Boa14} for another proof that $\rho \in {\rm Hom}_{\mathbb{S}}(\Pi,G)$ is stable (in the sense of GIT) if and only if it is irreducible.

\subsection{Trivial Irregular Types}\label{subsect_trivial_irrg}
Suppose that all irregular types are trivial, i.e. $Q_x=0$ for any $x\in\boldsymbol{D}$, and we use the notation $\boldsymbol{0}$ for the collection of trivial irregular types. In this case, the Betti moduli space in the wild case completely reduce to the tame case considered in \cite{HS23}.

Recall that the generators of the fundamental groupoid $\Pi$ of $X_{\boldsymbol{Q}}$ introduced in \S\ref{subsect_glo_corr} are
given by
\begin{enumerate}
    \item $\alpha_i,\beta_i$, $1 \leq i \leq g$;
    \item $\gamma_x$ for $x \in \boldsymbol{D}$;
    \item $\gamma_{x,d}$ for $x \in \boldsymbol{D}$ and $d \in \mathbb{A}_x$;
    \item $\gamma_{0x}$ for $x \in \boldsymbol{D}$.
\end{enumerate}
Since all irregular types are trivial, there is no anti-Stokes directions and then the set $\mathbb{A}_x$ is empty for every $x \in \boldsymbol{D}$. Therefore, when irregular types are trivial, the group $\Pi$ is generated by $\alpha_i, \beta_i, \gamma_x, \gamma_{0x}$ with the relation
\begin{align*}
    \left(  \prod_{i=1}^g [\alpha_i, \beta_i] \right) \cdot \left( \prod_{x \in \boldsymbol{D}} \mu_x \right) = {\rm id},
\end{align*}
where $\mu_x = \gamma^{-1}_{0x} \cdot \gamma_x \cdot \gamma_{0x}$. Following the same discussion, the group $\Pi'$ introduced in \S\ref{subsect_Sto_G_rep} is generated by
\begin{enumerate}
    \item $\alpha'_i,\beta'_i$ for $1 \leq i \leq g$;
    \item $\gamma'_x$ for $x \in \boldsymbol{D}$,
\end{enumerate}
with the relation
\begin{align*}
    \left(  \prod_{i=1}^g [\alpha'_i, \beta'_i] \right) \cdot \left( \prod_{x \in \boldsymbol{D}} \mu'_x \right) = {\rm id},
\end{align*}
where $\mu'_x = \gamma'_x$. Clearly, $\Pi'$ can be regarded as the fundamental group of $X_{\boldsymbol{D}}$ and the formal monodromy reduces to the topological monodromy. By Proposition \ref{prop_reps}, isomorphism classes of Stokes $G$-local systems with irregular type $\boldsymbol{0}$ are in one-to-one correspondence with $G$-orbits in ${\rm Hom}(\Pi',G)$, which implies that Stokes $G$-local systems with irregular type $\boldsymbol{0}$ on $X_{\boldsymbol{D}}$ are exactly $G$-local systems on $X_{\boldsymbol{D}}$. For the construction of the moduli space, we introduce the third group $\Pi''$ in \S\ref{subsect_cons_pi''}, which is generated by
\begin{enumerate}
    \item $\alpha''_i, \beta''_i$ for $1 \leq i \leq g$;
    \item $\iota''_x,\gamma''_x$ for $x \in \boldsymbol{D}$
\end{enumerate}
with the relation
\begin{align*}
    \left(  \prod_{i=1}^g [\alpha''_i, \beta''_i] \right) \cdot \left( \prod_{x \in \boldsymbol{D}} \mu''_x \right) = {\rm id},
\end{align*}
where $\mu''_x = \iota''_x \gamma''_x$. Fixing a collection of weights $\boldsymbol\theta = \{\theta_x, x \in \boldsymbol{D}\}$, the variety ${\rm Hom}_{\mathbb{S}}(\Pi'',\boldsymbol\theta)$ now parametrizes points
\begin{align*}
    ( (a''_i,b''_i)_{1 \leq i \leq g} , (l''_x,h''_x)_{x \in \boldsymbol{D}} )
\end{align*}
such that for each $x \in \boldsymbol{D}$, there exists $g_x \in G$ such that $g^{-1}_x l''_x \in L_{\theta_x}$ and $h''_x g_x \in P_{\theta_x}$. Moreover, there is a natural $(G \times \boldsymbol{L})$-action on ${\rm Hom}_{\mathbb{S}}(\Pi'',\boldsymbol\theta)$. This construction is exactly the same as \cite[Construction 3.10]{HS23}, and it is easy to check that the stability condition in this special case is equivalent to that in \cite{HS23}. Therefore, the moduli space $\mathcal{M}_{\rm B}(X,G,\boldsymbol{0},\boldsymbol\theta)$ of degree zero $R$-semistable $\boldsymbol\theta$-filtered Stokes $G$-local systems with irregular type $\boldsymbol{0}$ on $X_{\boldsymbol{D}}$ is exactly the moduli space $\mathcal{M}_{\rm B}(X_{\boldsymbol{D}}, G ,\boldsymbol\theta)$ of degree zero $R$-semistable $\boldsymbol\theta$-filtered $G$-local systems on $X_{\boldsymbol{D}}$ \cite[Theorem 1.2]{HS23}.

\subsection{``Weighted'' Eguchi--Hanson Space}\label{subsect_EH_space}
We consider an explicit example such that $G=\mathrm{SL}_2(\mathbb{C})$ and $(X,\boldsymbol{D})=(\mathbb{P}^1,0)$. Denote by $\alpha$ and $-\alpha$ the roots of ${\rm SL}_2(\mathbb{C})$, and let
\begin{align*}
U_+ := U_\alpha =
\begin{pmatrix}
1 & \ast \\
0 & 1
\end{pmatrix}, \
U_- := U_{-\alpha} = \begin{pmatrix}
    1 & 0\\
    \ast & 1
\end{pmatrix}.
\end{align*}
Given an irregular type with a pole of order $3$ at $z=0$
\begin{align*}
Q_{-3}=\frac{A_3}{z^3}+\frac{A_2}{z^2}+\frac{A_1}{z},
\end{align*}
where the subscript of the irregular type is for its degree, the leading coefficient $A_3$ is nontrivial. Since $G={\rm SL}_2(\mathbb{C})$, the leading coefficient $A_3$ is automatically regular and semisimple. In this case, the irregular type $Q_{-3}$ has $6$ anti-Stokes directions and \begin{align*}
    \mathbb{S}{\rm to}(Q_{-3}) = (U_+ \times U_-)^3.
\end{align*}
Since this irregular type $Q_{-3}$ is in the unramified case, the centralizer of $Q_{-3}$ coincides with the set of formal monodromies given by $Q_{-3}$, i.e.
\begin{align*}
H= H(\partial)=
\left\{\begin{pmatrix}
a&\\
&a^{-1}
\end{pmatrix}\ \Big| \ a\in\mathbb{C}^*\right\}.
\end{align*}
Therefore, the space ${\rm Hom}_{\mathbb{S}}(\Pi_1(X_{Q_{-3}}), G) \subseteq G \times H \times (U_+ \times U_-)^3$ is a closed subvariety including
\begin{align*}
    (c,h,(u_{+,i},u_{-,i})_{1 \leq i \leq 3}) \in G \times H \times (U_+ \times U_-)^3
\end{align*}
such that $c^{-1} (h \prod_{i=1}^3 (u_{+,i} u_{-,i})) c = {\rm id}$.

Given the weight
\begin{align*}
\theta=
\begin{pmatrix}
\frac{1}{2}&\\
& -\frac{1}{2}
\end{pmatrix}\in\mathfrak{t},
\end{align*}
it determines a parabolic subgroup of $\mathrm{SL}_2(\mathbb{C})$ as
\begin{align*}
P_\theta=
\left\{\begin{pmatrix}
a&b\\
0&a^{-1}
\end{pmatrix}\ | \ a\in\mathbb{C}^*, b\in\mathbb{C}\right\}.
\end{align*}
We consider a special $\theta$-filtered Stokes $G$-representation
\begin{align*}
    \rho = (c,h,(u_{+,i},u_{-,i})_{1 \leq i \leq 3}) \in {\rm Hom}_{\mathbb{S}}(\Pi_1(X_{Q_{-3}}), G)
\end{align*}
such that $c,h,u_{+,i},u_{-,i} \in P_\theta$ and at least one of them is not included in $P_{-\theta}$. For example,
\begin{align*}
u_{+,1} = \begin{pmatrix}
1 & 1\\
0 & 1
\end{pmatrix}, \
u_{+,2} = \begin{pmatrix}
1 & -1\\
0 & 1
\end{pmatrix}
\end{align*}
and the other elements are identity matrix. Clearly, if we take $P=P_\theta$, the parabolic subgroup $P$ is compatible with $\rho$. Furthermore, taking any nontrivial anti-dominant character $\chi$ of $P$, it is easy to check
\begin{align*}
    \deg^{\rm loc} \rho(P,\chi) = \langle \theta, \chi \rangle > 0,
\end{align*}
and the $\theta$-filtered Stokes $G$-representation $\rho$ is $R$-stable. By Theorem \ref{thm_Sto_moduli}, it corresponds to a point in $\mathcal{M}^s_{\rm B}(X_{\boldsymbol{D}},G,Q_{-3},\theta)$.

On the other hand, this representation $\rho$ is indecomposable but not semisimple. As we discussed in \S\ref{subsect_trivial_weight}, the classical wild character variety $\mathcal{M}_{\rm B}(X_{\boldsymbol{D}},G,Q,0)$ only parametrizes semisimple representations. Therefore, the $\boldsymbol\theta$-filtered Stokes $G$-representation $\rho$ does not correspond to a point in $\mathcal{M}^s_{\rm B}(X_{\boldsymbol{D}},G,Q_{-3},0)$. This example illustrates the fact that
\begin{align*}
    \mathcal{M}_{\rm B}(X_{\boldsymbol{D}},{\rm SL}_2(\mathbb{C}),Q_{-3},0) \ncong \mathcal{M}_{\rm B}(X_{\boldsymbol{D}},{\rm SL}_2(\mathbb{C}),Q_{-3},\theta)
\end{align*}
for general weight $\theta$. Moreover, the moduli space $\mathcal{M}_{\rm B}(X_{\boldsymbol{D}},G,Q_{-3},0)$ is exactly the Eguchi--Hanson space considered in \cite[\S2]{Boa18}, where he concluded that this is a smooth affine variety of dimension $2$.

\subsection{Airy Equation}\label{subsect_Airy}

We start from the Airy equation
\begin{align*}
    y''(t)=ty(t).
\end{align*}
This equation corresponds to the following connection
\begin{align*}
\nabla=d+
\begin{pmatrix}
0&-t\\
-1&0
\end{pmatrix}dt.
\end{align*}
We change the coordinate $z=\frac{1}{t}$
\begin{align*}
\nabla = d+
\begin{pmatrix}
0&z^{-3}\\
z^{-2}&0
\end{pmatrix}dz.
\end{align*}
Clearly, $\nabla$ has an irregular singularity at $z=0$ (or $t=\infty$). Thus, the Airy equation corresponds to a connection on $\mathbb{P}^1 \backslash \{ 0 \}$ with an irregular singularity at $z=0$, at which the topological monodromy is trivial \cite[Example 8.15]{vdPS03}. We continue working locally on the local coordinate $z$. Let
\begin{align*}
A(z) =
\begin{pmatrix}
0&z^{-3}\\
z^{-2}&0
\end{pmatrix} =
\begin{pmatrix}
0 & 1\\
0 & 0
\end{pmatrix} z^{-3} +
\begin{pmatrix}
0 & 0\\
1 & 0
\end{pmatrix} z^{-2}
\end{align*}
be the connection form of $\nabla$. We take (on a ramified cover)
\begin{align*}
    g_1=\begin{pmatrix}
        z^{1/4}&\\
        &z^{-1/4}
    \end{pmatrix}
\end{align*}
and get
\begin{align*}
\begin{aligned}
    g_1\circ \nabla &=d-dg_1\cdot g_1^{-1} + (g_1 A(z) g_1^{-1}) dz\\
    &=d+\begin{pmatrix}
        0 & z^{-5/2}\\
        z^{-5/2} & 0
    \end{pmatrix}dz-\frac{1}{4}\begin{pmatrix}
        1&0\\
        0&-1
    \end{pmatrix}\frac{dz}{z}
\end{aligned}
\end{align*}
Then we take
\begin{align*}
    g_2=\begin{pmatrix}
        \frac{1}{\sqrt{2}}&-\frac{1}{\sqrt{2}}\\
        \\
        \frac{1}{\sqrt{2}}&\frac{1}{\sqrt{2}}
    \end{pmatrix},
\end{align*}
and obtain
\begin{align*}
\begin{aligned}
    g_2\circ (g_1\circ \nabla)
    &=d+\begin{pmatrix}
        -z^{-5/2} & 0\\
        0 & z^{-5/2}
    \end{pmatrix}dz-\frac{1}{4}\begin{pmatrix}
        0&1\\
        1&0
    \end{pmatrix}\frac{dz}{z}\\
    &=d+d\Big(\begin{pmatrix}
        \frac{2}{3}z^{-3/2}&0\\
        0&-\frac{2}{3}z^{-3/2}
    \end{pmatrix}\Big)-\frac{1}{4}\begin{pmatrix}
        0&1\\
        1&0
    \end{pmatrix}\frac{dz}{z}.
\end{aligned}
\end{align*}
Therefore, the corresponding connection of the Airy equation is of irregular type
\begin{align*}
    Q_{-\frac{3}{2}}= \begin{pmatrix}
        \frac{2}{3}&0\\
        0&-\frac{2}{3}
    \end{pmatrix}z^{-\frac{3}{2}},
\end{align*}
which is in the ramified case. In conclusion, the corresponding connection of the Airy equation is a ${\rm SL}_2(\mathbb{C})$-connection with irregular type $Q_{-\frac{3}{2}}$ on $X_{\boldsymbol{D}}$, where $(X,\boldsymbol{D}) = (\mathbb{P}^1,0)$, and thus corresponds to a Stokes ${\rm SL}_2(\mathbb{C})$-representation (local system) with irregular type $Q_{-\frac{3}{2}}$ on $X_{\boldsymbol{D}}$.

\subsection{Stokes ${\rm SL}_2(\mathbb{C})$-local Systems with Ramified Irregular Types}\label{subsect_sl2_ramified}

We start from the setup
\begin{align*}
    G={\rm SL}_2(\mathbb{C}), \ (X,\boldsymbol{D}) = (\mathbb{P}^1,0), \ Q_{-\frac{3}{2}}=\begin{pmatrix}
        \frac{2}{3}&0\\
        0&-\frac{2}{3}
    \end{pmatrix}z^{-\frac{3}{2}},
\end{align*}
which is the same as \S\ref{subsect_Airy}, and consider Stokes ${\rm SL}_2(\mathbb{C})$-representations with irregular type $Q_{-\frac{3}{2}}$ on $X_{\boldsymbol{D}}$. Following the same notation as in \S\ref{subsect_EH_space}, we have
\begin{align*}
    \mathbb{S}{\rm to}(Q_{-\frac{3}{2}}) = U_- \times U_+ \times U_-
\end{align*}
and
\begin{align*}
    H=
\left\{\begin{pmatrix}
a&\\
&a^{-1}
\end{pmatrix}\ \Big| \ a\in\mathbb{C}^*\right\}, \ H(\partial)=
\left\{\begin{pmatrix}
& a \\
-a^{-1} &
\end{pmatrix}\ \Big| \ a\in\mathbb{C}^*\right\}.
\end{align*}
A point $(c,h, u_{-,1} , u_{+}, u_{-,2}) \in G \times H(\partial) \times U_- \times U_+ \times U_-$ corresponds to a Stokes $G$-representation in ${\rm Hom}_{\mathbb{S}}(\Pi_1(X_{Q_{-\frac{3}{2}}}),G)$ if it satisfies the condition that $c^{-1} h u_{-,1} u_+ u_{-,2} c = {\rm id}$. Ignoring $c$, we consider the equation
\begin{align*}
    h u_{-,1} u_+ u_{-,2} = {\rm id}.
\end{align*}
Let
\begin{align*}
    h = \begin{pmatrix}
        0 & a \\
        -a^{-1} & 0
    \end{pmatrix}, \
    u_{+,1} = \begin{pmatrix}
        1 & 0 \\
        b & 1
    \end{pmatrix}, \
    u_- = \begin{pmatrix}
        1 & c \\
        0 & 1
    \end{pmatrix}, \
    u_{+,2} = \begin{pmatrix}
        1 & 0 \\
        d & 1
    \end{pmatrix}.
\end{align*}
We have
\begin{align*}
    \begin{pmatrix}
        0 & -a \\
        a^{-1} & 0
    \end{pmatrix} =
    \begin{pmatrix}
        1 & 0 \\
        b & 1
    \end{pmatrix}
    \begin{pmatrix}
        1 & c \\
        0 & 1
    \end{pmatrix}
    \begin{pmatrix}
        1 & 0 \\
        d & 1
    \end{pmatrix}.
\end{align*}
Therefore,
\begin{align*}
    b=a^{-1} , \ c=-a, \ d=a^{-1}.
\end{align*}
This means that the formal monodromy uniquely determines Stokes data in this case. Therefore, the moduli space
\begin{align*}
    \mathcal{M}_{\rm B}(X_{\boldsymbol{D}},{\rm SL}_2(\mathbb{C}) ,Q_{-\frac{3}{2}}, 0) \cong {\rm pt}
\end{align*}
is a point. Note that whatever the formal monodromy is, the corresponding representation is irreducible. By the discussion in \S\ref{subsect_trivial_weight}, we have
\begin{align*}
    \mathcal{M}_{\rm B}(X_{\boldsymbol{D}},{\rm SL}_2(\mathbb{C}) ,Q_{-\frac{3}{2}}, \theta) \cong \mathcal{M}_{\rm B}(X_{\boldsymbol{D}},{\rm SL}_2(\mathbb{C}) ,Q_{-\frac{3}{2}}, 0) \cong {\rm pt}
\end{align*}
for any weight $\theta$. Moreover, the Stokes ${\rm SL}_2(\mathbb{C})$-representation corresponding to the Airy equation is the unique point in the moduli space $\mathcal{M}_{\rm B}(X_{\boldsymbol{D}},{\rm SL}_2(\mathbb{C}) ,Q_{-\frac{3}{2}}, \theta)$ as we discussed in \S\ref{subsect_Airy}. As a result, the connection corresponds to the Airy equation is both rigid and physically rigid.

\begin{rem}
Recently, we noticed that Hohl and Jakob studied physical rigidity of Kloosterman connections and applied the result to a $G$-version Airy equation and get the same result for rigidity \cite[Theorem 1.2.1]{HJ24}.
\end{rem}

Now we suppose that $Q$ is a ramified irregular type. Since we work on ${\rm SL}_2(\mathbb{C})$, the leading coefficient of $Q$ is regular and semisimple. Denote by $\mathbb{A}$ the set of anti-Stokes directions of $Q$, and for each $d \in \mathbb{A}$, $\mathbb{S}{\rm to}_d$ is a product of $U_+$ and $U_-$. Let $\mathbb{S}{\rm to}(Q):= \prod_{d \in \mathbb{A}} \mathbb{S}{\rm to}_d$. At the same time, we have
\begin{align*}
    H=
\left\{\begin{pmatrix}
a&\\
&a^{-1}
\end{pmatrix}\ \Big| \ a\in\mathbb{C}^*\right\}, \ H(\partial)=
\left\{\begin{pmatrix}
& a \\
-a^{-1} &
\end{pmatrix}\ \Big| \ a\in\mathbb{C}^*\right\}.
\end{align*}
A Stokes ${\rm SL}_2(\mathbb{C})$-representation with irregular type $Q$ can be regarded as a tuple $(c,h,(S_{d})_{d \in \mathbb{A}})$ such that
\begin{equation}\label{eq_rela_exmp}\tag{$\star$}
   c^{-1} h (\prod_{d \in \mathbb{A}} S_{d}) c = {\rm id}.
\end{equation}
Since the product $\prod_{d \in \mathbb{A}} S_{d}$ contains both (nontrivial) upper and lower triangular matrices, any representation corresponding a tuple $(c,h,(S_{d})_{d \in \mathbb{A}})$ satisfying the relation \eqref{eq_rela_exmp} is irreducible. Then we have the following proposition:

\begin{prop}\label{prop_ram_sl2}
    Let $G={\rm SL}_2(\mathbb{C})$. Given a ramified irregular type $Q$, we have
    \begin{align*}
        \mathcal{M}_{\rm B}(X_{\boldsymbol{D}},{\rm SL}_2(\mathbb{C}) ,Q, \theta) \cong \mathcal{M}_{\rm B}(X_{\boldsymbol{D}},{\rm SL}_2(\mathbb{C}) ,Q, 0)
    \end{align*}
    for any weight $\theta$.
\end{prop}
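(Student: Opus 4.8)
The plan is to show that, for a ramified irregular type $Q$ and $G = {\rm SL}_2(\mathbb{C})$, the weight $\theta$ affects neither the stability condition nor the $S$-equivalence relation, so that both moduli spaces parametrize the same objects and carry the same variety structure. The starting point is the irreducibility observation preceding the statement: any Stokes ${\rm SL}_2(\mathbb{C})$-representation satisfying the relation \eqref{eq_rela_exmp} has image containing both nontrivial upper- and lower-triangular unipotent elements, hence is not contained in any Borel subgroup and is therefore irreducible. Equivalently, no proper parabolic subgroup $P \subsetneq {\rm SL}_2(\mathbb{C})$ is compatible with such a representation $\rho'$.

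First I would use this to analyse the $R$-stability condition (Definition \ref{defn_stab_cond_G_Stokes}) for an arbitrary weight $\theta$. Since the inequality $\deg^{\rm loc}\rho'(P,\chi) \geq 0$ (resp. $>0$) is only imposed over proper parabolic subgroups $P$ compatible with $\rho'$, and there are none, every $\theta$-filtered Stokes ${\rm SL}_2(\mathbb{C})$-representation is automatically $R$-stable, for every $\theta$; degree zero is automatic since ${\rm SL}_2(\mathbb{C})$ is semisimple. Moreover, as remarked after Definition \ref{defn_stab_cond_G_Stokes}, every element of $G$ conjugates into any fixed parabolic, so the underlying space of $\theta$-filtered Stokes representations is ${\rm Hom}_{\mathbb{S}}(\Pi',G)$ independently of $\theta$. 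I would then check that $S$-equivalence collapses to isomorphism: for an irreducible $\rho'$ the only parabolic compatible with (hence admissible with) $\rho'$ is $G$ itself, with Levi $G$ and $(\rho')_G = \rho'$, so two such representations are $S$-equivalent precisely when they are $G$-conjugate. Combining these facts with Corollary \ref{cor_Stokes_rep_and_loc}, for every weight $\theta$ the points of $\mathcal{M}_{\rm B}(X_{\boldsymbol{D}}, {\rm SL}_2(\mathbb{C}), Q, \theta)$ are in bijection with isomorphism classes of irreducible Stokes ${\rm SL}_2(\mathbb{C})$-local systems with irregular type $Q$, i.e. with $G$-orbits in ${\rm Hom}_{\mathbb{S}}(\Pi',G)$, exactly as for $\theta = 0$ (Corollary \ref{cor_wild_char_var} and Lemma \ref{lem_trivial_weight}).

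To promote this bijection of point sets to an isomorphism of varieties, I would argue at the level of the GIT construction of Theorem \ref{thm_Sto_moduli}. By Proposition \ref{prop_stab_equiv}, $R$-stability of $\rho'$ corresponds to $\chi_{\boldsymbol\theta}$-stability of the associated $\rho'' \in {\rm Hom}_{\mathbb{S}}(\Pi'',\boldsymbol\theta)$; since every $\rho'$ is $R$-stable of degree zero, every point of ${\rm Hom}_{\mathbb{S}}(\Pi'',\boldsymbol\theta)$ is $\chi_{\boldsymbol\theta}$-stable, and the quotient defining $\mathcal{M}_{\rm B}(X_{\boldsymbol{D}}, {\rm SL}_2(\mathbb{C}), Q, \theta)$ is a geometric quotient whose closed points are the $(G \times \boldsymbol{L})$-orbits. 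The morphism $p_{\boldsymbol\theta}\colon {\rm Hom}_{\mathbb{S}}(\Pi'',\boldsymbol\theta) \to {\rm Hom}_{\mathbb{S}}(\Pi',G)$, $(l''_x,h''_x) \mapsto l''_x h''_x$, of Lemma \ref{lem_corr_pi'_pi''} is $G$-equivariant for the conjugation action on the target while $\boldsymbol{L}$ acts trivially (the product $l''_x h''_x$ is $\boldsymbol{L}$-invariant), and it induces the orbit bijection above. Since every $G$-orbit in ${\rm Hom}_{\mathbb{S}}(\Pi',G)$ is closed (irreducible representations have closed conjugation orbits), the geometric quotient ${\rm Hom}_{\mathbb{S}}(\Pi',G)/\!\!/G$ exists, and $p_{\boldsymbol\theta}$ descends to a bijective morphism $\mathcal{M}_{\rm B}(X_{\boldsymbol{D}}, {\rm SL}_2(\mathbb{C}), Q, \theta) \to {\rm Hom}_{\mathbb{S}}(\Pi',G)/\!\!/G$. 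Applying the same reasoning to $\theta = 0$, where $\boldsymbol{L} = \prod_{x}G$ and $\chi_{\boldsymbol 0}$ is trivial, identifies $\mathcal{M}_{\rm B}(X_{\boldsymbol{D}}, {\rm SL}_2(\mathbb{C}), Q, 0)$ with the same geometric quotient, yielding the desired isomorphism.

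The main obstacle I anticipate is precisely this last step: upgrading the bijective morphism onto ${\rm Hom}_{\mathbb{S}}(\Pi',G)/\!\!/G$ to an isomorphism of varieties rather than merely a bijection on points. Concretely, one must check that $p_{\boldsymbol\theta}$ exhibits ${\rm Hom}_{\mathbb{S}}(\Pi'',\boldsymbol\theta)$ as a fibration over ${\rm Hom}_{\mathbb{S}}(\Pi',G)$ whose fibres are exactly the $\boldsymbol{L}$-orbits absorbed by the quotient, so that no information is lost, and then invoke the universal property of geometric quotients together with the normality of the GIT quotient of the smooth affine variety ${\rm Hom}_{\mathbb{S}}(\Pi',G)$ (via Zariski's main theorem, a bijective dominant morphism of normal varieties in characteristic zero being an isomorphism). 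Once this technical identification is in place, the weight-independence of the moduli space follows.
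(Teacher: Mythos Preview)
Your proposal is correct and follows essentially the same line as the paper: the paper's argument consists precisely of the irreducibility observation preceding the proposition (no proper parabolic of ${\rm SL}_2(\mathbb{C})$ can be compatible with any Stokes representation having ramified irregular type, since the relation forces nontrivial Stokes data in both $U_+$ and $U_-$), together with the reference back to \S\ref{subsect_trivial_weight}. You in fact go further than the paper by explicitly analysing $S$-equivalence and by flagging the passage from a bijection of points to an isomorphism of varieties; the paper simply asserts the isomorphism once irreducibility is established and does not discuss this last technical step.
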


\begin{rem}\label{rem_ram_sln}
In the case of ${\rm SL}_n(\mathbb{C})$, if the leading coefficient of a ramified irregular type $Q$ is regular and semisimple, the same consequence holds as Proposition \ref{prop_ram_sl2}, i.e.
\begin{align*}
    \mathcal{M}_{\rm B}(X_{\boldsymbol{D}},{\rm SL}_n(\mathbb{C}) ,Q, \theta) \cong \mathcal{M}_{\rm B}(X_{\boldsymbol{D}},{\rm SL}_2(\mathbb{C}) ,Q, 0)
\end{align*}
for any weight $\theta$.
\end{rem}

\section{Relation to Wild Nonabelian Hodge Correspondence on Curves}\label{sect_nahc}

The authors established the wild nonabelian Hodge correspondence for principal bundles on curves at the level of categories. Based on the existence of the corresponding moduli spaces, the Dolbeault moduli space admits a hyperK\"alher structure. Since the wild nonabelian Hodge correspondence for principal bundles is only given when irregular types are unramfied, we always assume that the irregular types considered in this section are unramified.

\subsection{Betti Moduli Spaces}

We follow the construction given in \S\ref{subsect_cons_pi''} and define a map
\begin{align*}
    {\rm Hom}_{\mathbb{S}}(\Omega'',\boldsymbol{P}) \rightarrow \prod_{x \in \boldsymbol{D}} P_{-\theta_x}
\end{align*}
as
\begin{align*}
    ((a''_i,b''_i)_{1 \leq i \leq g} , (l''_x, h''_x, S''_{x,d})_{x \in \boldsymbol{D},d \in \mathbb{A}_x}  ) \rightarrow (l''_x h''_x)_{x \in \boldsymbol{D}}.
\end{align*}
We fix a collection of elements $M_{\boldsymbol\theta} = \{ M_{\theta_x} , x \in \boldsymbol{D} \}$ in the Levi subgroups, i.e. $M_{\theta_x} \in L_{\theta_x}$, which is regarded as an element in $\prod_{x \in \boldsymbol{D}} L_{\theta_x}$. Consider the following composition of maps
\begin{align*}
    \widetilde{{\rm Hom}}_{\mathbb{S}}(\Omega'', \boldsymbol{P}) \rightarrow {\rm Hom}_{\mathbb{S}}(\Omega'',\boldsymbol{P}) \rightarrow \prod_{x \in \boldsymbol{D}} P_{-\theta_x} \rightarrow \prod_{x \in \boldsymbol{D}} L_{\theta_x}.
\end{align*}
Denote by $\widetilde{{\rm Hom}}_{\mathbb{S}}(\Omega'', \boldsymbol{P}, M_{\boldsymbol\theta}) \subseteq \widetilde{{\rm Hom}}_{\mathbb{S}}(\Omega'', \boldsymbol{P})$ the preimage of $M_{\boldsymbol\theta} \in \prod_{x \in \boldsymbol{D}} L_{\theta_x}$. Taking the restriction
\begin{align*}
    {\rm Hom}_{\mathbb{S}}(\Omega'', \boldsymbol{\theta}, M_{\boldsymbol\theta}) : = \widetilde{{\rm Hom}}_{\mathbb{S}}(\Omega'', \boldsymbol{P}, M_{\boldsymbol\theta}) |_{ {\rm Hom}_{\mathbb{S}}(\Omega'', G)  }
\end{align*}
and adding the relation \eqref{eq_rela_Pi''}, we obtain a variety ${\rm Hom}_{\mathbb{S}}(\Pi'', \boldsymbol{\theta}, M_{\boldsymbol\theta})$ together with an induced $(G \times \boldsymbol{L})$-action. We define
\begin{align*}
    \mathcal{M}_{\rm B}(X_{\boldsymbol{D}},G,\boldsymbol{Q},\boldsymbol\theta, M_{\boldsymbol\theta}) := {\rm Hom}_{\mathbb{S}}(\Pi'', \boldsymbol{\theta}, M_{\boldsymbol\theta}) / \!\! / (G \times \boldsymbol{L}, \chi_{\boldsymbol\theta}).
\end{align*}
As a direct result of Theorem \ref{thm_Sto_moduli}, we obtain the Betti moduli space considered in the (unramified) wild nonabelian Hodge correspondence on noncompact curves \cite{HS22}.

\begin{cor}\label{cor_Betti}
    There exists a quasi-projective variety $\mathcal{M}_{\rm B}(X_{\boldsymbol{D}},G,\boldsymbol{Q},\boldsymbol\theta, M_{\boldsymbol\theta})$ as the moduli space of degree zero $R$-semistable $\boldsymbol\theta$-filtered Stokes $G$-local systems with irregular type $\boldsymbol{Q}$ on $X_{\boldsymbol{D}}$ such that the Levi factors of formal monodromies around punctures are given by $M_{\boldsymbol\theta}$ (up to conjugation).
\end{cor}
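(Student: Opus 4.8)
The plan is to recognize that the variety ${\rm Hom}_{\mathbb{S}}(\Pi'',\boldsymbol\theta,M_{\boldsymbol\theta})$ constructed just above is exactly the fibre, over the fixed collection $M_{\boldsymbol\theta}$, of the ``Levi factor of the formal monodromy'' map, and then to run the GIT machinery of Theorem \ref{thm_Sto_moduli} verbatim on this fibre. Concretely, the composite $\widetilde{{\rm Hom}}_{\mathbb{S}}(\Omega'',\boldsymbol{P}) \to \prod_{x}P_{-\theta_x} \to \prod_{x}L_{\theta_x}$ records, for each $x$, the Levi component of $g_x^{-1}(l''_x h''_x)g_x$, i.e. the Levi factor of the formal monodromy $l''_x h''_x = \rho''(\iota''_x\gamma''_x)$ after conjugating it into $P_{-\theta_x}$. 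Imposing that this equal $M_{\theta_x}$ cuts out a locally closed subvariety ${\rm Hom}_{\mathbb{S}}(\Pi'',\boldsymbol\theta,M_{\boldsymbol\theta})\subseteq {\rm Hom}_{\mathbb{S}}(\Pi'',\boldsymbol\theta)$, and the induced $(G\times\boldsymbol{L})$-action together with the linearising character $\chi_{\boldsymbol\theta}$ restrict to it.

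First I would check that this subvariety is genuinely $(G\times\boldsymbol{L})$-invariant. Under the action the product $l''_x h''_x$ is sent to $g\,l''_x h''_x\,g^{-1}$, since the factor $l_x\in L_{\theta_x}$ cancels in $(l''_x l_x^{-1})(l_x h''_x)$; thus the $\boldsymbol{L}$-part acts trivially on the formal monodromy and the $G$-part acts by conjugation. Replacing an auxiliary witness $g_x$ by $g g_x$ then shows that the condition, which only asks for the existence of some $g_x$ conjugating $l''_x h''_x$ into $P_{-\theta_x}$ with prescribed Levi component, depends solely on the $G$-conjugacy class of $l''_x h''_x$ and is therefore preserved. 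Hence ${\rm Hom}_{\mathbb{S}}(\Pi'',\boldsymbol\theta,M_{\boldsymbol\theta})$ is a $(G\times\boldsymbol{L})$-stable locally closed subvariety, and applying King's construction \cite[\S2]{King94} to it with the character $\chi_{\boldsymbol\theta}$ yields the quasi-projective GIT quotient
\[
\mathcal{M}_{\rm B}(X_{\boldsymbol{D}},G,\boldsymbol{Q},\boldsymbol\theta,M_{\boldsymbol\theta}) = {\rm Hom}_{\mathbb{S}}(\Pi'',\boldsymbol\theta,M_{\boldsymbol\theta})/\!\!/(G\times\boldsymbol{L},\chi_{\boldsymbol\theta}).
\]

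It then remains to interpret the points. Since ${\rm Hom}_{\mathbb{S}}(\Pi'',\boldsymbol\theta,M_{\boldsymbol\theta})$ sits inside ${\rm Hom}_{\mathbb{S}}(\Pi'',\boldsymbol\theta)$ as a $(G\times\boldsymbol{L})$-stable subvariety with the \emph{same} linearisation, the characterisation of $\chi_{\boldsymbol\theta}$-(semi)stable points and of GIT equivalence in Proposition \ref{prop_stab_equiv} applies unchanged: a point is $\chi_{\boldsymbol\theta}$-(semi)stable exactly when the corresponding $\boldsymbol\theta$-filtered Stokes $G$-representation is degree-zero $R$-(semi)stable, and GIT classes correspond to $S$-equivalence classes. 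Translating through Corollary \ref{cor_Stokes_rep_and_loc} and Lemma \ref{lem_corr_pi'_pi''} from representations to filtered Stokes $G$-local systems, and noting that the extra constraint is precisely the requirement that the Levi factor of the formal monodromy at each puncture be conjugate to $M_{\theta_x}$, identifies $\mathcal{M}_{\rm B}(X_{\boldsymbol{D}},G,\boldsymbol{Q},\boldsymbol\theta,M_{\boldsymbol\theta})$ with the asserted moduli space.

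The main obstacle I anticipate is the invariance and well-definedness of fixing the Levi factor ``up to conjugation'': one must verify that the auxiliary choice of $g_x$ in the fibre-product construction does not affect the cut-out locus, so that the condition descends to a genuine $(G\times\boldsymbol{L})$-invariant subvariety of ${\rm Hom}_{\mathbb{S}}(\Pi'',\boldsymbol\theta)$ rather than merely defining the auxiliary space $\widetilde{{\rm Hom}}_{\mathbb{S}}(\Omega'',\boldsymbol{P},M_{\boldsymbol\theta})$. Once this is settled, everything else is inherited from Theorem \ref{thm_Sto_moduli} and Proposition \ref{prop_stab_equiv}, which is why the corollary follows directly.
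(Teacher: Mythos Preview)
Your proposal is correct and matches the paper's approach: the paper gives no proof at all beyond the sentence ``As a direct result of Theorem \ref{thm_Sto_moduli}\ldots'' preceding the corollary, and what you have written is precisely the unpacking of that sentence. Your additional care in checking $(G\times\boldsymbol{L})$-invariance of the fibre and the independence from the auxiliary choice of $g_x$ is more than the paper provides, but is exactly the verification implicit in calling it a ``direct result''.
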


\subsection{De Rham and Dolbeault Moduli Spaces}

Fixing a collection of weights $\boldsymbol\theta$, let $\mathcal{G}_{\boldsymbol\theta}$ be the corresponding parahoric group scheme on $X$ (see \cite[\S2]{BS15} or \cite[\S2.1]{KSZ23} for instance). We fix a collection of unramified irregular types $\boldsymbol{Q}$. The moduli space $\mathcal{M}_{\rm Higgs}(X,\mathcal{G}_{\boldsymbol\theta},\boldsymbol{Q})$ (resp. $\mathcal{M}_{\rm Conn}(X,\mathcal{G}_{\boldsymbol\theta},\boldsymbol{Q})$) of degree zero $R$-semistable merohoric (= meromorphic and parahoric) $\mathcal{G}_{\boldsymbol\theta}$-torsors with irregular type $\boldsymbol{Q}$ (resp. degree zero $R$-semistable meromorphic parahoric $\mathcal{G}_{\boldsymbol\theta}$-connections with irregular type $\boldsymbol{Q}$) exists. The approach of constructing these two moduli spaces is the same as that for logahoric (= logarithmic and parahoric) $\mathcal{G}_{\boldsymbol\theta}$-torsors \cite[\S 6]{KSZ23}, where the authors constructed the moduli spaces of logahoric Higgs torsors via an equivalence between logarithmic Higgs torsors on $X$ and equivariant $G$-Higgs bundles on an appropriate cover. We also want to remind the reader that it needs a more careful discussion for the corresponding Dolbeault and de Rham moduli spaces when irregular types are ramified because there are two covers needed to be taken into consideration: one is for weights and the other is for the irregular types.

Given the existence for the moduli spaces of merohoric Higgs torsors and merohoric connections, we define the Dolbeault and de Rham residue morphisms in order to obtain the desired Dolbeault and de Rham moduli spaces in the wild nonabelian Hodge correspondence \cite[Theorem in \S1]{HS22}, and a similar construction in the parabolic case is given in \cite[\S 7]{BGM20}. We take the Dolbeault side as an example. We define the following map
\begin{align*}
    \mu_{\mathrm{Dol}}: \mathcal{M}_{\mathrm{Higgs}}(X,\mathcal{G}_{\boldsymbol{\theta}},\boldsymbol{Q})&\longrightarrow\prod_{x\in\boldsymbol{D}}(\mathfrak{h}_x\cap\mathfrak{l}_{\theta_x})/L_{\theta_x}\\
     (\mathcal{E},\varphi)
     &\longmapsto\prod_{x\in\boldsymbol{D}}L_{\theta_x}\cdot[\mathrm{Res}_x(\varphi)],
\end{align*}
where $\mathfrak{h}_x$ is the Lie algebra of the stabilizer $H_x$ and $\mathfrak{l}_{\theta_x}$ is the Lie algebra of the Levi subgroup $L_{\theta_x}$. Given a $L_{\boldsymbol{\theta}}$-orbit $\boldsymbol{\mathcal{O}}=\{\mathcal{O}_x, x\in\boldsymbol{D}\}\subseteq \prod_{x\in\boldsymbol{D}}(\mathfrak{h}_x\cap\mathfrak{l}_{\theta_x})/L_{\theta_x}$, the fiber
\begin{align*}
    \mathcal{M}_{\mathrm{Dol}}(X,\mathcal{G}_{\boldsymbol{\theta}},\boldsymbol{Q},\boldsymbol{\mathcal{O}}):=\mu_{\mathrm{Dol}}^{-1}(\boldsymbol{\mathcal{O}})
\end{align*}
is the moduli space of degree zero $R$-semistable meromorphic parahoric $\mathcal{G}_{\boldsymbol{\theta}}$-Higgs torsors with residue data lying inside $\boldsymbol{\mathcal{O}}$. If $\boldsymbol{\mathcal{O}}$ corresponds to the $L_{\boldsymbol{\theta}}$-orbit of a collection of Levi factors $\varphi_{\boldsymbol{\theta}}$, by abuse of notation, the moduli space $\mathcal{M}_{\mathrm{Dol}}(X,\mathcal{G}_{\boldsymbol{\theta}},\boldsymbol{Q},\boldsymbol{\mathcal{O}})$ is also denoted as $\mathcal{M}_{\mathrm{Dol}}(X,\mathcal{G}_{\boldsymbol{\theta}},\boldsymbol{Q},\varphi_{\boldsymbol{\theta}})$.

As a direct result of the wild nonabelian Hodge correspondence at the level of categories \cite[Theorem in \S 1]{HS22}, we formulate the wild nonabelian Hodge correspondence at the level of moduli spaces.

\begin{thm}\label{thm_nahc}
    Let $(\boldsymbol{\alpha}, \varphi_{\boldsymbol{\alpha}}, \widetilde{\boldsymbol{Q}})$, $ (\boldsymbol{\beta},\nabla_{\boldsymbol{\beta}}, \boldsymbol{Q})$, and $(\boldsymbol{\gamma}, M_{\boldsymbol{\gamma}}, \boldsymbol{Q})$ be the local data for Dolbeault, de Rham and Betti, respectively. For each $x\in\boldsymbol{D}$, these data are grouped into the following table,
    \begin{center}
	\begin{tabular}{|c|c|c|c|}
		\hline
		\rule{0pt}{2.6ex} & \ \ \ Dolbeault\ \ \ & \ \ \ de Rham\ \ \ & \ \ \ Betti\ \ \ \\[0.5ex]
		\hline
		\rule{0pt}{2.6ex} weights & $\alpha_x$ & $\beta_x$ & $\gamma_x$ \\[0.5ex]
		\hline
		\rule{0pt}{2.6ex} residues $\backslash$ monodromies & $\varphi_{\alpha_x}$ & $\nabla_{\beta_x}$ & $M_{\gamma_x}$ \\[0.7ex]
		\hline
		\rule{0pt}{2.6ex} irregular types & $\widetilde{Q}_x$ & $Q_x$ & $Q_x$ \\[0.5ex]
		\hline
	\end{tabular}
\end{center}
Let $\nabla_{\beta_x} = s_{\beta_x} + Y_{\beta_x}$ be the Jordan decomposition, where $s_{\beta_x}$ is the semisimple part and $Y_{\beta_x}$ is the nilpotent part. We complete $Y_{\beta_x}$ into an $\mathfrak{sl}_2$-triple $(X_{\beta_x}, H_{\beta_x}, Y_{\beta_x})$. Suppose these data are subjected into the relations in the following table,
    \begin{center}
	\begin{tabular}{|c|c|c|c|}
		\hline
		\rule{0pt}{2.6ex} & Dolbeault & \ \ de Rham\ \ & Betti \\[0.5ex]
		\hline
		\rule{0pt}{2.6ex} weights & $\frac{1}{2}(s_{\beta_x} + \bar{s}_{\beta_x})$ & $\beta_x$ & $\beta_x - \frac{1}{2}(s_{\beta_x} + \bar{s}_{\beta_x})$ \\[0.7ex]
		\hline
		\rule{0pt}{2.6ex} residues $\backslash$ monodromies & $\frac{1}{2}(s_{\beta_x} - \beta_x) + (Y_{\beta_x} - H_{\beta_x} + X_{\beta_x})$ & $s_{\beta_x} + Y_{\beta_x}$ & $\exp(-2\pi \sqrt{-1}(s_{\beta_x} + Y_{\beta_x}))$ \\[0.7ex]
		\hline
		\rule{0pt}{2.6ex} irregular types & $\widetilde{Q}_x=\frac{1}{2}Q_x$ & $Q_x$ & $Q_x$ \\[0.7ex]
		\hline
	\end{tabular}
\end{center}
Then the irregular Riemann--Hilbert correspondence (Theorem \ref{thm_global_con_sys}) gives rise to an isomorphism of complex analytic spaces
\begin{align*}
    \mathcal{M}_{\mathrm{B}}^{\mathrm{(an)}}(X_{\boldsymbol{D}},G,\boldsymbol{Q},\boldsymbol{\gamma},M_{\boldsymbol{\gamma}})\cong \mathcal{M}_{\mathrm{dR}}^{\mathrm{(an)}}(X,\mathcal{G}_{\boldsymbol{\beta}},\boldsymbol{Q},\nabla_{\boldsymbol{\beta}}),
\end{align*}
and we also have a homeomorphism of topological spaces
\begin{align*}
    \mathcal{M}_{\mathrm{Dol}}^{\mathrm{(top)}}(X,\mathcal{G}_{\boldsymbol{\alpha}},\widetilde{\boldsymbol{Q}},\varphi_{\boldsymbol{\alpha}})\cong \mathcal{M}_{\mathrm{dR}}^{\mathrm{(top)}}(X,\mathcal{G}_{\boldsymbol{\beta}},\boldsymbol{Q},\nabla_{\boldsymbol{\beta}}).
\end{align*}
\end{thm}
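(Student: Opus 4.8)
The plan is to deduce both identifications from the categorical wild nonabelian Hodge correspondence of \cite{HS22} together with the irregular Riemann--Hilbert correspondence of Theorem \ref{thm_global_con_sys}, by verifying in each case that the underlying object-level equivalence preserves degree-zero $R$-(semi)stability (Definition \ref{defn_stab_cond_G_Stokes}) and $S$-equivalence, so that it descends to the moduli spaces constructed in Corollary \ref{cor_Betti} and via the method of \cite{KSZ23,HS22}. The two statements are of different natures because Riemann--Hilbert is holomorphic in families whereas the harmonic-metric construction is only real-analytic, which is exactly why the first identification is an isomorphism of analytic spaces and the second only a homeomorphism.

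First I would treat the Betti--de Rham isomorphism. By Theorem \ref{thm_global_con_sys} the category of meromorphic parahoric $\mathcal{G}_{\boldsymbol{\beta}}$-connections with irregular type $\boldsymbol{Q}$ is equivalent to the category of $\boldsymbol{\gamma}$-filtered Stokes $G$-local systems with irregular type $\boldsymbol{Q}$, once the weights are matched by $\boldsymbol{\gamma} = \boldsymbol{\beta} - \tfrac12(s_{\boldsymbol{\beta}}+\bar{s}_{\boldsymbol{\beta}})$ and the fixed residue datum $\nabla_{\boldsymbol{\beta}}$ is carried to the fixed formal-monodromy datum $M_{\boldsymbol{\gamma}} = \exp(-2\pi\sqrt{-1}\,\nabla_{\boldsymbol{\beta}})$, as recorded in the tables. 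Since passing to the monodromy of a meromorphic connection is holomorphic in families, this equivalence induces an isomorphism of the corresponding complex analytic moduli spaces; I would then check that it intertwines the $R$-semistability and $S$-equivalence of both sides, so that by Corollary \ref{cor_Betti} it descends to the asserted $\mathcal{M}^{\mathrm{(an)}}_{\mathrm{B}}\cong\mathcal{M}^{\mathrm{(an)}}_{\mathrm{dR}}$.

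For the Dolbeault--de Rham homeomorphism I would use the gauge-theoretic core of \cite[Theorem in \S1]{HS22}: every $R$-polystable object on either side carries a unique harmonic metric, so both moduli spaces are identified, as sets, with the moduli of solutions to the associated wild parahoric self-duality equations modulo gauge. The Dolbeault and de Rham complex structures are the two standard complex structures on this common real-analytic space, whence the identification is a homeomorphism but not, in general, a biholomorphism. The local data are matched as in the first table: the Dolbeault weight $\alpha_x=\tfrac12(s_{\beta_x}+\bar{s}_{\beta_x})$, the Dolbeault residue $\tfrac12(s_{\beta_x}-\beta_x)+(Y_{\beta_x}-H_{\beta_x}+X_{\beta_x})$ built from the $\mathfrak{sl}_2$-triple completion of $Y_{\beta_x}$, and the halved irregular type $\widetilde{Q}_x=\tfrac12 Q_x$; here I would again confirm preservation of $R$-semistability and of $S$-equivalence classes, so that the bijection descends to $\mathcal{M}^{\mathrm{(top)}}_{\mathrm{Dol}}\cong\mathcal{M}^{\mathrm{(top)}}_{\mathrm{dR}}$.

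The hard part will be verifying that the local data match exactly as in the tables. This is a computation in the local normal form of the harmonic metric near each puncture $x\in\boldsymbol{D}$, where the passage from the connection to the Higgs field halves the irregular type and the $\mathfrak{sl}_2$-triple $(X_{\beta_x},H_{\beta_x},Y_{\beta_x})$ governs the shift of the residue and the splitting of the weight into its Dolbeault and Betti parts; the required estimates near the punctures are supplied by \cite{HS22,BB04}. By comparison the continuity of the Dolbeault--de Rham map is essentially automatic once both spaces are realized as the same moduli of self-duality solutions, and the Betti--de Rham isomorphism is comparatively formal, the only genuine work being the bookkeeping of weights and the exponential relation between residue and monodromy.
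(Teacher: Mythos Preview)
Your proposal is correct and aligns with the paper's approach: the paper states the theorem without an explicit proof, presenting it as ``a direct result of the wild nonabelian Hodge correspondence at the level of categories \cite[Theorem in \S 1]{HS22}'' combined with the moduli-space constructions of this paper (Corollary \ref{cor_Betti} and the de Rham/Dolbeault constructions via \cite{KSZ23}). One minor recalibration: the local-data matching in the tables is not new work to be done here but is already established in \cite{HS22}, so what you flag as ``the hard part'' is in fact imported from that reference, and the genuine content of this theorem is just the descent to moduli spaces once their existence is secured.
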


As a direct application, the Dolbeault moduli space (and the de Rham and Betti moduli spaces as well) is hyperK\"ahler, we will sketch a proof via wild harmonic principal bundles, weighted Sobolev spaces, and hyperK\"ahler reduction. The hyperK\"ahler geometry for the moduli spaces in the classical nonabelian Hodge theory has been well-studied decades ago, for example, \cite{Hit87,Fuj91}. A similar property is also known for the tame and wild cases (for $\mathrm{GL}_n(\mathbb{C})$ or $\mathrm{SL}_n(\mathbb{C})$ as the structure group) from the work of Konno, Nakajima, Biquard, Biquard--Boalch, among others \cite{Kon93,Nak96,Biq97,BB04}. We would like to mention that, besides the hyperK\"ahler property, the completeness of hyperK\"ahler metrics was also studied by Biquard--Boalch \cite{BB04}.

\begin{thm}\label{hK-Higgs}
$\mathcal{M}_{\mathrm{Dol}}(X,\mathcal{G}_{\boldsymbol{\theta}},\boldsymbol{Q}, \varphi_{\boldsymbol\theta})$ admits a hyperK\"ahler structure.
\end{thm}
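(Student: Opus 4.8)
The plan is to realize $\mathcal{M}_{\mathrm{Dol}}(X,\mathcal{G}_{\boldsymbol\theta},\boldsymbol{Q},\varphi_{\boldsymbol\theta})$ as an infinite-dimensional hyperK\"ahler quotient, following Hitchin's original construction \cite{Hit87} and its singular refinements due to Konno, Nakajima and Biquard--Boalch \cite{Kon93,Nak96,Biq97,BB04}. Fix a $C^\infty$ principal $G$-bundle $E$ on $X_{\boldsymbol{D}}$ underlying the data. First I would introduce the configuration space $\mathcal{B}$ of pairs $(A,\varphi)$, where $A$ is a unitary connection on $E$ and $\varphi \in \Omega^{1,0}(X_{\boldsymbol{D}},\mathrm{ad}\,E^{\mathbb{C}})$ is a Higgs field, subject to the prescribed asymptotic behaviour at each puncture: the polar part of $\varphi$ is governed by the halved irregular type $\widetilde{Q}_x = \tfrac12 Q_x$, and the admissible growth of sections is controlled by weighted Sobolev norms adapted to the parahoric weights $\boldsymbol\theta$. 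On such a space the quaternionic multiplication acts on the tangent spaces $\Omega^{0,1}(\mathrm{ad}\,E^{\mathbb{C}}) \oplus \Omega^{1,0}(\mathrm{ad}\,E^{\mathbb{C}})$, and the weighted $L^2$ metric together with the three complex structures $I,J,K$ equips $\mathcal{B}$ with a flat (formally) hyperK\"ahler structure.

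Next I would let $\mathcal{G}_{\boldsymbol\theta}$ denote the group of unitary gauge transformations with matching weighted regularity and parahoric behaviour along $\boldsymbol{D}$, acting on $\mathcal{B}$ and preserving the three K\"ahler forms. Writing the hyperK\"ahler moment map as $\mu = (\mu_I,\mu_J,\mu_K)$, its components are
\[
\mu_J + \sqrt{-1}\,\mu_K = \bar\partial_A \varphi, \qquad \mu_I = F_A + [\varphi,\varphi^{*}],
\]
so that the complex part cuts out the integrable (holomorphic) Higgs fields and the real part is the curvature term of Hitchin's self-duality equation. Imposing the residue data $\varphi_{\boldsymbol\theta}$ amounts to fixing the level of the moment map along the boundary contributions at each puncture, encoded by a product of coadjoint orbits $\prod_{x \in \boldsymbol{D}}\mathcal{O}_x$ inside $\prod_{x \in \boldsymbol{D}}(\mathfrak{h}_x \cap \mathfrak{l}_{\theta_x})$, exactly as in the finite-dimensional reductions of Nakajima and Biquard--Boalch. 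The corresponding level set is therefore the space of solutions to the wild Hitchin equations with the prescribed singular and residue data, i.e.\ the space of wild harmonic principal bundles.

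The identification of this hyperK\"ahler quotient with the moduli space then rests on the wild nonabelian Hodge correspondence for principal bundles established in \cite{HS22}: every degree zero $R$-polystable merohoric $\mathcal{G}_{\boldsymbol\theta}$-Higgs torsor with irregular type $\boldsymbol{Q}$ carries a harmonic metric solving the above equations, unique up to unitary gauge, and conversely each solution yields such a polystable torsor. Consequently the assignment $(A,\varphi) \mapsto (\mathcal{E},\varphi)$ descends to a bijection between the quotient $\mu^{-1}(\text{level})/\mathcal{G}_{\boldsymbol\theta}$ and $\mathcal{M}_{\mathrm{Dol}}(X,\mathcal{G}_{\boldsymbol\theta},\boldsymbol{Q},\varphi_{\boldsymbol\theta})$, respecting the complex-analytic structure on each side. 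The Hitchin--Karlhede--Lindstr\"om--Ro\v{c}ek quotient theorem then transports the hyperK\"ahler structure from $\mathcal{B}$ to the moduli space, which gives the assertion.

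The hard part will be entirely analytic, and it is precisely what the weighted Sobolev framework is designed to handle. One must choose weights adapted \emph{simultaneously} to the parahoric filtration prescribed by the rational cocharacters $\boldsymbol\theta$ (which forces passage to a finite cover trivializing the weights) and to the irregular decay dictated by $\boldsymbol{Q}$, then verify that in these spaces the linearised operators are Fredholm, that the $\mathcal{G}_{\boldsymbol\theta}$-action admits good local slices, and that the weighted $L^2$ pairing remains finite so that the three K\"ahler forms and the moment map are genuinely well-defined despite the blow-up of $\varphi$ at the punctures. The most delicate point is the local model near each $x \in \boldsymbol{D}$: one has to match the asymptotics of the harmonic metric with the fixed residue orbit $\mathcal{O}_x$ and confirm that the boundary terms appearing in the moment-map computation reproduce exactly the coadjoint-orbit level, so that the reduction in stages is valid. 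This is where the combination of irregular types with rational parahoric weights makes the estimates heavier than in the $\mathrm{GL}_n(\mathbb{C})$ and $\mathrm{SL}_n(\mathbb{C})$ cases treated in \cite{BB04}, and it is the step I would expect to require the most care.
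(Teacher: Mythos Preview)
Your proposal is correct and follows essentially the same route as the paper's own sketch: realize the Dolbeault moduli space as an infinite-dimensional hyperK\"ahler quotient of an affine space of pairs $(A,\varphi)$ in weighted Sobolev completions, with the unitary gauge group acting via a moment map whose components are $F_A+[\varphi,\varphi^*]$ and $\bar\partial_A\varphi$, and then identify the quotient with the moduli space of wild harmonic bundles via the correspondence of \cite{HS22}. The paper fixes a background $(\boldsymbol\theta,\boldsymbol{n})$-adapted Higgs bundle and takes the level set $\boldsymbol\mu^{-1}(0)$ (the residue constraint being absorbed into the definition of the ambient affine space rather than into a nonzero moment-map level), but this is only a matter of packaging; your discussion of the analytic issues is in fact more detailed than the paper's sketch.
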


\begin{proof}[Sketch of the proof]
To prove the theorem, here we first introduce some notions from analytic aspects, more details can be found in \cite[\S4]{HKSZ22} and \cite[\S2.2]{HS22}.

Let $\boldsymbol{n}=\{n_x, \, x\in\boldsymbol{D}\}$ be a collection of integers labeled by points in $\boldsymbol{D}$. Given a metrized $G$-Higgs bundle $(E,\bar\partial_E,\phi,h)$ on $X_{\boldsymbol{D}}$, it is called \emph{$(\boldsymbol{\theta},\boldsymbol{n})$-adapted} if $h$ is a $\boldsymbol{\theta}$-adapted hermitian metric (see e.g. \cite{HKSZ22,HS22} for the definition of adaptedness), and for each $x\in\boldsymbol{D}$, we have
\begin{align*}
        z^{\theta_x}\cdot(z^{n_x}\cdot\phi(z))\cdot z^{-\theta_x}\ \text{is bounded as}\ z\ \text{approaches}\ 0,
\end{align*}
where $z$ is the local coordinate vanishing at $x$. In this case, $h$ induces an extension of $(E,\bar\partial_E,\phi)$ into a merohoric $\mathcal{G}_{\boldsymbol{\theta}}$-Higgs torsor $(\mathcal{E},\varphi)$ on $X$, with $\varphi\in H^0(X,\mathcal{E}(\mathfrak{g})\otimes K_X(\boldsymbol{nD}))$, where $\boldsymbol{nD}:=\sum_{x \in \boldsymbol{D}} n_x \cdot x$.


Now we give an analytic proof via wild harmonic bundles. Let $\mathcal{E}$ be a parahoric $\mathcal{G}_{\boldsymbol{\theta}}$-torsor on $X$, and put $E:=\mathcal{E}|_{X_{\boldsymbol{D}}}$, which is a $G$-bundle on $X_{\boldsymbol{D}}$. We fix a background $(\boldsymbol{\theta},\boldsymbol{n})$-adapted  $G$-Higgs bundle $(E,\bar\partial_0,\phi_0,h)$ on $X_{\boldsymbol{D}}$ with (Levi factor of) residue data lying in $\boldsymbol{\mathcal{O}}$. For each $i\geq 0$, let $d_0:=\partial_h'+\bar\partial_0$ be the Chern connection.  Denote by $\mathcal{A}^i(E(\mathfrak{g}))$ the space of $C^\infty$-sections of $E(\mathfrak{g}) \otimes \Omega^i_{X_{\boldsymbol{D}}}$, and let $D^p_k\mathcal{A}^i(E(\mathfrak{g}))$ be the Sobolev completion of $\mathcal{A}^i(E(\mathfrak{g}))$ with respect to the Sobolev norm $\|\bullet\|_{D^p_k}$ (see \cite{Kon93} for details).

Note that the metric $h$ is a section of $K$-reduction of $E$, i.e. $h\in\mathcal{A}^0(E/K)$, where $K\subseteq G$ is the maximal compact subgroup. Let $\mathfrak{k}$ be the Lie algebra of $K$, and then we have the Cartan decomposition $\mathfrak{g}=\mathfrak{k}\oplus i\mathfrak{k}$, which induces the decomposition
\begin{align*}
        E(\mathfrak{g})=E(\mathfrak{h})\oplus iE(\mathfrak{h}).
\end{align*}
Let $F=h^*E$ be the $K$-reduction, which is a $K$-bundle with $F\times_KG\cong E$. Define
\begin{align*}
        \mathbb{G}&=\{ g\in D_2^p\mathcal{A}^0(E(G)):\ g\in\mathcal{A}^0(F(K))\},\\
        \mathbb{A}&=\{(d_A,\phi): d_A-d_0\in D^p_1\mathcal{A}^1(E(\mathfrak{h})), \phi-\phi_0\in D_1^p\mathcal{A}^{1,0}(E(\mathfrak{g}))\},
\end{align*}
there is an adjoint action of the unitary gauge group $\mathbb{G}$ on $\mathbb{A}$. The affine space $\mathbb{A}$ is of infinite dimensional, and it is hyperK\"ahler. Indeed, its tangent space can be identified with $D^p_1\mathcal{A}^{0,1}(E(\mathfrak{g}))\oplus D_1^p\mathcal{A}^{1,0}(E(\mathfrak{g}))$ with Riemannian metric given by
\begin{align*}
        g((\xi_1,\eta_1),(\xi_2,\eta_2))=i\int_{X_{\boldsymbol{D}}}\mathrm{Tr}(\eta_2\wedge\xi_1-\eta_1\wedge\xi_2).
\end{align*}
The three complex structures are
\begin{align*}
        I(\xi,\eta)=(i\xi,i\eta),\ J(\xi,\eta)=(i\bar{\eta}^{\mathrm{T}},-i\bar{\xi}^{\mathrm{T}}),\ K(\xi,\eta)=IJ(\xi,\eta)=(-\bar{\eta}^{\mathrm{T}},\bar{\xi}^{\mathrm{T}}).
\end{align*}
The action of $\mathbb{G}$ on $\mathbb{A}$ admits a hyperK\"ahler moment map $\boldsymbol{\mu}=(\mu_I,\mu_J,\mu_K)$:
    \begin{align}\label{wild_harmonic}
    \left\{
    \begin{aligned}
         \mu_I(d_A,\phi)&=F_A+[\phi,\phi^*],\\
        (\mu_J+i\mu_K)(d_A,\phi)&=2i\bar\partial_A\phi.
    \end{aligned}
    \right.
    \end{align}
    A solution to \eqref{wild_harmonic} is exactly a wild harmonic bundle on $X_{\boldsymbol{D}}$. Then the hyperK\"ahler quotient $\boldsymbol{\mu}^{-1}(0)/\mathbb{G}=(\mu_I^{-1}(0)\cap\mu_J^{-1}(0)\cap\mu_K^{-1}(0))/\mathbb{G}$ admits a hyperK\"ahler structure, as the moduli space of $(\boldsymbol{\theta},\boldsymbol{n})$-adapted  wild harmonic bundles on $X_{\boldsymbol{D}}$ with (Levi factor of) residue data lying in $\boldsymbol{\mathcal{O}}$, which is naturally diffeomorphic to $\mathcal{M}_{\mathrm{Dol}}(X,\mathcal{G}_{\boldsymbol{\theta}},\boldsymbol{Q},\varphi_{\boldsymbol\theta})$.

\end{proof}

\bibliographystyle{amsalpha}
\bibliography{ref_Betti}

\bigskip
\noindent\small{\textsc{Max Planck Institute for Mathematics in the Sciences}\\
		 Inselstraße 22, 04103 Leipzig, Germany}\\
\emph{E-mail address}:  \texttt{pfhwangmath@gmail.com}

\bigskip
\noindent\small{\textsc{Department of Mathematics, South China University of Technology}\\
		381 Wushan Rd, Tianhe Qu, Guangzhou, Guangdong, China}\\
\emph{E-mail address}:  \texttt{hsun71275@scut.edu.cn}

\end{document}